\title{Renormalised energy between boundary vortices in thin-film micromagnetics with Dzyaloshinskii-Moriya interaction}
\author{Radu Ignat\thanks{Institut de Math\'ematiques de Toulouse \& Institut Universitaire de France, UMR 5219, Universit\'e de Toulouse, CNRS, UPS
IMT, F-31062 Toulouse Cedex 9, France. Email: Radu.Ignat@math.univ-toulouse.fr
} \quad  \quad  Fran\c cois L'Official\thanks{Institut de Math\' ematiques de Toulouse, UMR 5219, Universit\' e de Toulouse, CNRS, UPS, IMT, F-31062 Toulouse Cedex 9, France. Email: Francois.Lofficial@math.univ-toulouse.fr; francois.lof@gmail.com}}
\theoremstyle{definition}
\newtheorem{definition}{Definition}[section]
\theoremstyle{plain}
\newtheorem{theorem}[definition]{Theorem}
\newtheorem{lemma}[definition]{Lemma}
\newtheorem{corollary}[definition]{Corollary}
\newtheorem{proposition}[definition]{Proposition}
\theoremstyle{remark}
\newtheorem{remark}[definition]{Remark}
\newtheorem{notation}[definition]{Notation}
\newcommand{\N}{\mathbb{N}}
\newcommand{\Z}{\mathbb{Z}}
\newcommand{\R}{\mathbb{R}}
\newcommand{\C}{\mathbb{C}}
\newcommand{\phid}{\phi}
\newcommand{\phir}{\varphi}
\renewcommand{\epsilon}{\varepsilon}
\renewcommand{\d}{\mathrm{d}}
\newcommand{\dr}{\partial}
\newcommand{\eps}{\varepsilon}
\newcommand{\lv}{\left\vert}
\newcommand{\rv}{\right\vert}
\newcommand{\lV}{\left\Vert}
\newcommand{\rV}{\right\Vert}
\newcommand{\lb}{\left\lbrace}
\newcommand{\rb}{\right\rbrace}
\newcommand{\m}{V}
\newcommand{\Dirac}{\boldsymbol\delta}
\begin{document}

\maketitle
\begin{abstract}
We consider a three-dimensional micromagnetic model with Dzyaloshinskii-Moriya interaction in a thin-film regime for boundary vortices. In this regime, we prove a dimension reduction result: the nonlocal three-dimensional model reduces to a local two-dimensional Ginzburg-Landau type model in terms of the averaged magnetization in the thickness of the film. This reduced model captures the interaction between boundary vortices (so-called renormalised energy), that we determine by a~$\Gamma$-convergence result at the second order and then we analyse its minimisers. They nucleate two boundary vortices whose position depends on the Dzyaloshinskii-Moriya interaction.
\end{abstract}

\tableofcontents

\section{Introduction}
\label{SECTION_INTRODUCTION}

We consider a ferromagnetic sample of cylindrical shape of thickness~$t$:
\begin{equation*}
\Omega_t^\ell
= \Omega^\ell \times (0,t) \subset \mathbb{R}^3,
\end{equation*}
where the horizontal section~$\Omega^\ell \subset \mathbb{R}^2$ is a bounded, simply connected~$C^{1,1}$ domain of typical length~$\ell$ (for example,~$\Omega^\ell$ can be assumed to be a disk of diameter~$\ell$).
The magnetization~$m$ is a three-dimensional unit-length vector field
\begin{equation*}
m \colon \Omega_t^\ell \rightarrow \mathbb{S}^2,
\end{equation*}
where~$\mathbb{S}^2$ is the unit sphere in~$\mathbb{R}^3$, the constraint~$\left\vert m \right\vert = 1$ yielding the non-convexity of the problem. 
We consider the micromagnetic energy with Dzyaloshinskii-Moriya interaction (in the absence of anisotropy and applied magnetic field):
\begin{align}
\label{DEF_general_micromagnetic_energy}
E(m)
&
= A^2 \int_{\Omega_t^\ell} \left\vert \nabla m \right\vert^2 \mathrm{d} x
+ \int_{\Omega_t^\ell} D : \nabla m \wedge m \ \mathrm{d} x
+ \int_{\mathbb{R}^3} \left\vert \nabla u \right\vert^2 \mathrm{d} x.
\end{align}
The first term in~$E(m)$ is the exchange energy, generated by short-range interactions between magnetic spins, where the exchange length~$A>0$ is an intrinsic parameter of the ferromagnetic material, typically of the order of nanometers.
The second term is the Dzyaloshinskii-Moriya interaction~(DMI), see~\cite{Dzyalo}, taking into account the antisymmetric properties of the material. The~DMI density is given here by
\begin{align}
\label{DEF_DMI_density}
D : \nabla m \wedge m
= \sum_{j=1}^3 D_j \cdot \partial_j m \wedge m,
\end{align}
where~$D=(D_1,D_2,D_3) \in \mathbb{R}^{3 \times 3}$ is the~DMI tensor,~$\cdot$ denotes the inner product in~$\mathbb{R}^3$, and~$\wedge$ denotes the cross product in~$\mathbb{R}^3$.
The third term in~$E(m)$ is called magnetostatic or stray-field energy, it is a nonlocal term generated by long-range spins interactions carried by the stray-field potential~$u \in H^1(\mathbb{R}^3,\mathbb{R})$ satisfying
\begin{align*}
\Delta u = \nabla \cdot (m \mathds{1}_{\Omega_t^\ell}) \ \text{ in the distributional sense in } \mathbb{R}^3,
\end{align*}
where~$m$ is extended by~0 outside the sample~$\Omega_t^\ell$.
For more details, we refer to~\cite{Aharoni}, \cite{Dzyalo} or~\cite{HS98}.

\subsection{Nondimensionalisation}
\label{INTROSECTION_nondimensionalisation}

The multiscale aspect of the micromagnetic model is carried by the~DMI tensor~$D$, the exchange length~$A$, the horizontal length~$\ell$ and the thickness~$t$ of the sample.
We introduce the dimensionless parameters
\begin{equation*}
h = \frac{t}{\ell} \ \ \text{ and } \ \ \eta = \frac{A}{\ell};
\end{equation*}
the thin-film regimes correspond to the limit~$h \rightarrow 0$ (sometimes denoted by~$h \ll 1$).

In order to study the micromagnetic energy in a thin-film regime appropriate to boundary vortices, we nondimensionalize in length and set~$\widehat{D}=\frac{1}{\ell}D$,
\begin{equation*}
\Omega_h = \frac{\Omega_t^\ell}{\ell} = \Omega \times (0,h) \subset \mathbb{R}^3,
\end{equation*}
where~$\Omega = \frac{\Omega^\ell}{\ell} \subset \mathbb{R}^2$ is a bounded, simply connected~$C^{1,1}$ domain of typical length~$1$.
To \linebreak each~$x=(x_1,x_2,x_3) \in \Omega_t^\ell$, we associate~$\widehat{x} = \frac{x}{\ell} \in \Omega_h$, and then~$m_h \colon \Omega_h \rightarrow \mathbb{S}^2$ and~$u_h \colon \mathbb{R}^3 \rightarrow \mathbb{R}$ given by
\begin{equation*}
m_h(\widehat{x})=m(x),
\ \ \
u_h(\widehat{x})=\frac{1}{\ell}u(x),
\end{equation*}
that satisfy
\begin{align}
\label{EQ_Maxwell_u_h}
\Delta u_h = \nabla \cdot (m_h\mathds{1}_{\Omega_h}) \ \text{ in the distributional sense in } \mathbb{R}^3.
\end{align}
The micromagnetic energy \eqref{DEF_general_micromagnetic_energy} can then be written in terms of~$m_h$:
\begin{align*}
E(m)
=\widehat{E}(m_h)
& = \ell^3 \left[
\eta^2 \int_{\Omega_h} \vert \nabla m_h \vert^2 \mathrm{d} \widehat{x}
+ \int_{\Omega_h} \widehat{D} : \nabla m_h \wedge m_h \ \mathrm{d} \widehat{x}
+ \int_{\mathbb{R}^3} \vert \nabla u_h \vert^2 \mathrm{d} \widehat{x}
\right].
\end{align*}
For simplicity of the notations, we write~$x$ instead of~$\widehat{x}$ in the following.

\subsection{Thin-film regime for boundary vortices}
\label{SECTION_regime}

We consider the thin-film regime studied by Ignat-Kurzke~\cite{IK22} with appropriate scaling of the~DMI tensor~$\widehat{D}=\frac{D}{\ell}$. More precisely, we consider here the regime
\begin{equation}
\begin{split}
& h \ll 1,
\ \ \eta \ll 1,
\ \ \frac{1}{\lv \log h \rv} \ll \epsilon \ll 1, \ \ \frac{\widehat{D}_{13}}{\eta^2} \rightarrow 2\delta_1,
\ \ \frac{\widehat{D}_{23}}{\eta^2} \rightarrow 2\delta_2,
\\
& \frac{1}{\eta^2} \left( \sum_{j,k=1}^2 \vert \widehat{D}_{jk} \vert + \sum_{k=1}^3 \vert \widehat{D}_{3k} \vert \right) \ll \sqrt{\vert \log \epsilon \vert},
\end{split}
\label{DEF_regime3D}
\end{equation}
where $\delta_1,\delta_2 \in \mathbb{R}$, $\widehat{D}=(\widehat{D}_{jk})_{1 \leqslant j,k \leqslant 3} \in \mathbb{R}^{3 \times 3}$ and
\begin{align*}
\epsilon = \frac{\eta^2}{h \lv \log h \rv}
\end{align*}
that corresponds to the core size of the boundary vortices.
The parameters~$\eta=\eta(h)$,~$\epsilon=\epsilon(h)$ and~$\widehat{D}=\widehat{D}(h)$ are assumed to be functions in~$h$; our first aim is to prove a~$\Gamma$-convergence result at the first order in the limit~$h \rightarrow 0$ that quantifies the number of boundary vortices in the domain.

The regime~\eqref{DEF_regime3D} implies\footnote{We use that inequalities~$a \ll b \ll 1$ imply~$a \lv \log a \rv \ll b \lv \log b \rv \ll 1$, and then choose~$a=\frac{1}{\lv \log h \rv}$ and~$b=\epsilon$.}:
\begin{align*}
\frac{\log \lv \log h \rv}{\lv \log h \rv} \ll \epsilon \lv \log \epsilon \rv \ll 1.
\end{align*}
In fact, our second aim is to prove a~$\Gamma$-convergence result at the second order as~$h \rightarrow 0$, capturing the interaction energy between boundary vortices; for that, we restrict to a narrower regime than~\eqref{DEF_regime3D}:
\begin{equation}
\begin{split}
& h \ll 1,
\ \ \eta \ll 1,
\ \ \frac{\log \lv \log h \rv}{\lv \log h \rv} \ll \epsilon \ll 1,
\ \ \lv \frac{\widehat{D}_{13}}{\eta^2} - 2\delta_1 \rv + \lv \frac{\widehat{D}_{23}}{\eta^2} - 2\delta_2 \rv \ll \frac{1}{\sqrt{\vert \log \epsilon \vert}},
\\
& \frac{1}{\eta^2} \left( \sum_{j,k=1}^2 \vert \widehat{D}_{jk} \vert + \sum_{k=1}^3 \vert \widehat{D}_{3k} \vert \right) \ll \frac{1}{\sqrt{\vert \log \epsilon \vert}}.
\end{split}
\label{DEF_regime3D_log}
\end{equation}

We assume that~$\Omega \subset \R^2$ is a bounded, simply connected~$C^{1,1}$  domain. We consider the three-dimensional rescaled energy
\begin{align*}
E_h(m_h)
& = \frac{\widehat{E}(m_h)}{\ell^3h\eta^2\lv \log \epsilon \rv},
\end{align*}
for maps~$m_h \colon \Omega_h =\Omega \times (0,h) \rightarrow \mathbb{S}^2$ and~$u_h \colon \mathbb{R}^3 \rightarrow \mathbb{R}$ satisfying~\eqref{EQ_Maxwell_u_h}.
More precisely, the energy~$E_h(m_h)$ is given by
\begin{align}
\label{DEF_Eh}
E_h(m_h)
& = \frac{1}{\lv \log \epsilon \rv}
\left( \frac{1}{h} \int_{\Omega_h} \lv \nabla m_h \rv^2 \d x
+ \frac{1}{h \eta^2} \int_{\Omega_h} \widehat{D} : \nabla m_h \wedge m_h \ \d x
+ \frac{1}{h \eta^2} \int_{\R^3} \lv \nabla u_h \rv^2 \d x \right).
\end{align}

\subsection{Dimension reduction}
\label{INTROSECTION_dimension_reduction}

The thin-film regime is characterized by the assumption~$h=t/\ell \rightarrow 0$, i.e., the variations of the magnetization~$m_h$ in the thickness direction~$x_3$ are strongly penalised. Therefore~$m_h$ is expected to behave as its~$x_3$-average~$\overline{m}_h \colon \Omega \rightarrow \R^3$, that is, 
\begin{align}
\label{DEF_mh_mean}
\overline{m}_h(x') = \frac{1}{h} \int_0^h m_h(x',x_3) \ \mathrm{d} x_3 \ \ \ \text{ for every } x' \in \Omega,
\end{align}
where~$\lv \overline{m}_h \rv \leqslant 1$ in~$\Omega$.
With this in mind, we assume for a bit that~$m_h$ does not depend on~$x_3$, and that
\begin{align*}
m_h \text{ varies on length scales } \gg h.
\end{align*}
We are interested in the scaling of the stray-field energy in this regime; within our assumption (i.e.,~$m_h \equiv \overline{m}_h$), the Maxwell equation~\eqref{EQ_Maxwell_u_h} implies
\begin{align*}
\Delta u_h
= \nabla \cdot (m_h\mathds{1}_{\Omega_h})
= \left( \nabla' \cdot m'_h \right) \mathds{1}_{\Omega_h} - (m_h \cdot \nu) \mathds{1}_{\partial\Omega_h}
\end{align*}
in the distributional sense in~$\mathbb{R}^3$, where~$\nu$ is the outer unit normal vector on~$\partial\Omega_h$, and \textbf{prime~$'$ corresponds to~2D quantities} such as the in-plane magnetization~$m_h'=(m_{h,1},m_{h,2})$ and the in-plane divergence~$\nabla' \cdot m'_h=\partial_1m_{h,1}+\partial_2m_{h,2}$. In other words,~$u_h \in H^1(\R^3)$ is the solution of the transmission problem
\begin{align*}
\left\lbrace
\begin{array}{rcll}
\Delta u_h & = & \nabla' \cdot m'_h & \text{ in } \Omega_h,
\\ \Delta u_h & = & 0 & \text{ in } \mathbb{R}^3 \setminus \Omega_h,
\\ \left[ \frac{\partial u_h}{\partial \nu} \right] & = & m_h \cdot \nu & \text{ on } \partial \Omega_h,
\end{array}
\right.
\end{align*}
where~$[a]=a^+-a^-$ stands for the jump of~$a$ with respect to the outer unit normal vector~$\nu$ on~$\partial \Omega_h$.
From~\cite{GarciaCervera} (see also~\cite{Aharoni66},~\cite{DKMO04},~\cite[Section~2.1.2]{IgnatHDR}), we can express the stray-field energy by considering the Fourier transform in the horizontal variables:
\begin{align*}
\int_{\mathbb{R}^3} \left\vert \nabla u_h \right\vert^2 \mathrm{d} x
& =
h \int_{\mathbb{R}^2} \frac{\left\vert \xi' \cdot \mathcal{F}(m'_h\mathds{1}_{\Omega})(\xi') \right\vert^2}{\left\vert \xi' \right\vert^2} \left( 1 - g_h \left( \left\vert \xi' \right\vert \right) \right) \mathrm{d} \xi'
\\
& \quad
+ h \int_{\mathbb{R}^2} \left\vert \mathcal{F} (m_{h,3} \mathds{1}_{\Omega})(\xi') \right\vert^2 g_h \left( \left\vert \xi' \right\vert \right) \mathrm{d} \xi',
\end{align*}
where~$\mathcal{F}$ stands for the Fourier transform in~$\mathbb{R}^2$, i.e., for every~$f \colon \mathbb{R}^2 \rightarrow \mathbb{C}$ and for every~$\xi' \in \mathbb{R}^2$,
\begin{align*}
\mathcal{F}(f)(\xi')
= \int_{\mathbb{R}^2} f(x')e^{- 2i\pi x' \cdot \xi'} \mathrm{d} x',
\ \ \text{and} \ \
g_h(\vert \xi' \vert) = \frac{1-e^{-2\pi h \vert \xi' \vert}}{2\pi h \vert \xi' \vert}.
\end{align*}
In the asymptotics~$h \rightarrow 0$, we have~$g_h \left( \left\vert \xi' \right\vert \right) \rightarrow 1$ and~$1-g_h \left( \left\vert \xi' \right\vert \right) \approx \pi h \lv \xi' \rv$, hence (see~\cite{Carbou01},~\cite{DKMO04},~\cite{KS05})
\begin{equation*}
\begin{split}
\int_{\mathbb{R}^3} \left\vert \nabla u_h \right\vert^2 \mathrm{d} x
& \approx h^2 \left\Vert (\nabla' \cdot m'_h)\mathds{1}_\Omega \right\Vert_{\dot{H}^{-1/2}(\mathbb{R}^2)}^2
\\
& \quad + \dfrac{1}{2\pi} h^2 \left\vert \log h \right\vert \int_{\partial \Omega} \left( m'_h \cdot \nu' \right)^2 \mathrm{d} \mathcal{H}^1
+ h \int_{\Omega} m_{h,3}^2 \ \mathrm{d} x',
\end{split}
\end{equation*}
where~$\nu'$ is the outer unit normal vector on~$\partial\Omega$. Hence, the stray-field energy is asymptotically decomposed in three terms in the thin-film regime.
The first term is nonlocal and penalizes the volume charges, as an homogeneous~$\dot{H}^{-1/2}$ seminorm, and favors N\'eel walls. The second term takes into account the lateral charges on the cylindrical sample and favors boundary vortices. The third term penalizes the surface charges on the top and bottom of the cylinder, and leads to interior vortices. For more details on the different types of singularities that may occur in thin-film regimes, we refer to~\cite{DKMO04} or~\cite{IgnatHDR}.

Our dimension reduction result in Theorem~\ref{THM_reduction3D2D} shows that the stray-field energy of a general magnetization~$m_h$ depending on~$x_3$ reduces in our regime to the last two local terms in the~$x_3$-average~$\overline{m}_h$ (while the nonlocal term~$h^2 \lV (\nabla' \cdot \overline{m}'_h)\mathds{1}_\Omega \rV_{\dot{H}^{-1/2}(\R^2)}^2$ becomes negligible).
More precisely, for a given vector~$\delta=(\delta_1,\delta_2) \in \R^2$, we consider the two-dimensional reduced energy functional for a two-dimensional map~$v \colon \Omega \subset \R^2 \rightarrow \R^2$ (standing for the in-plane average~$\overline{m}'_h$):
\begin{align}
\label{DEF_Eepsetadelta}
E_{\epsilon,\eta}^\delta(v)
& = \int_\Omega \left\vert \nabla' v \right\vert^2 \d x
+ 2 \int_\Omega \delta \cdot \nabla' v \wedge v \ \d x
\nonumber
\\
& \qquad
+ \frac{1}{\eta^2} \int_\Omega \left( 1- \vert v \vert^2 \right)^2 \d x
+ \frac{1}{2\pi\epsilon} \int_{\partial \Omega} (v \cdot \nu')^2 \d \mathcal{H}^1,
\end{align}
where~$\epsilon,\eta>0$,~$\nabla'=(\dr_1,\dr_2)$ and~$\nu'$ is the outer unit normal vector on~$\dr\Omega$. 
Note that by identifying the complex plane~$\C \simeq \R^2$, we have~$\lv \nabla'v \rv^2+2\delta \cdot \nabla'v \wedge v=\lv (\nabla'-i\delta)v \rv^2-\lv \delta \rv^2\lv v \rv^2$. Therefore, the functional~$E_{\epsilon,\eta}^\delta$ is similar to the Ginzburg-Landau model with magnetic potential, combining a boundary penalisation (favoring boundary vortices) with an interior penalisation (favoring interior vortices). The case~$\delta=0$ was analysed in Ignat-Kurzke~\cite{IK21} (see also Moser~\cite{Moser03}). 

\begin{theorem}
\label{THM_reduction3D2D}
Let~$\Omega_h=\Omega \times (0,h)$ with~$\Omega \subset \R^2$ a bounded, simply connected~$C^{1,1}$ domain. In the regime~\eqref{DEF_regime3D}, consider a family of magnetizations~$\lb m_h \colon \Omega_h \rightarrow \mathbb{S}^2 \rb_{h \downarrow 0}$ that satisfies
\begin{align*}
\limsup\limits_{h \rightarrow 0} E_h(m_h) < +\infty
\end{align*}
and let~$\overline{m}_h=(\overline{m}'_h,\overline{m}_{h,3}) \colon \Omega \rightarrow \R^3$ be the average of~$m_h$ in~\eqref{DEF_mh_mean}.
Then\footnote{We write~$a=o_h(b)$ if~$a/b \rightarrow 0$ as~$h \rightarrow 0$.}
\begin{align*}
E_h(m_h) \geqslant \frac{1}{\lv \log \epsilon \rv} E_{\epsilon,\eta}^\delta(\overline{m}'_h) - o_h(1) \ \ \text{ as } h \rightarrow 0.
\end{align*}
Moreover, in the more restrictive regime~\eqref{DEF_regime3D_log}, we have
\begin{align*}
E_h(m_h) \geqslant \frac{1}{\lv \log \epsilon \rv} E_{\epsilon,\eta}^\delta(\overline{m}'_h) - o_h \left( \frac{1}{\lv \log \epsilon \rv} \right) \ \ \text{ as } h \rightarrow 0.
\end{align*}
Furthermore, if~$m_h$ is independent of~$x_3$ (i.e.,~$m_h=m_h(x')$,~$x'=(x_1,x_2)$) and~$m_{h,3}=0$ (i.e.,~$m_h=(m'_h,0)$, $|m'_h|=1$), then in the regime~\eqref{DEF_regime3D}, we have
\begin{align*}
E_h(m_h) = \frac{1}{\lv \log \epsilon \rv} E_{\epsilon,\eta}^\delta(\overline{m}'_h) - o_h(1) \ \ \text{ as } h \rightarrow 0,
\end{align*}
while in the regime~\eqref{DEF_regime3D_log}, we have
\begin{align*}
E_h(m_h) = \frac{1}{\lv \log \epsilon \rv} E_{\epsilon,\eta}^\delta(\overline{m}'_h) - o_h \left( \frac{1}{\lv \log \epsilon \rv} \right) \ \ \text{ as } h \rightarrow 0.
\end{align*}
\end{theorem}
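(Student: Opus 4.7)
My plan is to compare $E_h(m_h)$ with $\frac{1}{|\log\epsilon|}E_{\epsilon,\eta}^\delta(\overline{m}'_h)$ term by term: split the 3D energy into its exchange, DMI and stray-field pieces, and bound each from below by the matching contribution of the 2D functional applied to $v=\overline{m}'_h$, tracking remainders at the required rate.

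The exchange term is immediate from Jensen's inequality in $x_3$:
\[
\frac{1}{h}\int_{\Omega_h}|\nabla m_h|^2\,\d x \;\geq\; \int_\Omega |\nabla'\overline{m}_h|^2\,\d x' \;\geq\; \int_\Omega |\nabla' v|^2\,\d x',
\]
which directly produces the Dirichlet term of $E_{\epsilon,\eta}^\delta$. For the DMI I would decompose $\widehat D : \nabla m_h \wedge m_h$ by derivative index and by component of the cross product. The only entries of $\widehat D/\eta^2$ not required to be small in \eqref{DEF_regime3D} are $\widehat D_{13}/\eta^2\to 2\delta_1$ and $\widehat D_{23}/\eta^2\to 2\delta_2$; paired with the third component of $\partial_j m_h\wedge m_h$, namely $\partial_j m'_h\wedge m'_h$ (which depends only on $m'_h$), and after the $x_3$-average, these produce $2\int_\Omega \delta\cdot\nabla' v\wedge v\,\d x'$ up to a Cauchy-Schwarz error controlled by the variance $\mathrm{Var}_{x_3}(m'_h)$. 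All other DMI pieces (those multiplied by a small coefficient, those carrying a $\partial_3$ derivative, or those involving $m_{h,3}$) are bounded by Cauchy-Schwarz against $\|\nabla m_h\|_{L^2(\Omega_h)}\leq C(h|\log\epsilon|)^{1/2}$ and $\|m_h\|_{L^2(\Omega_h)}\leq Ch^{1/2}$, the former coming from $E_h\leq C$ combined with Young on the DMI. The smallness conditions in \eqref{DEF_regime3D} (resp.\ \eqref{DEF_regime3D_log}) then produce an $o_h(1)$ (resp.\ $o_h(1/|\log\epsilon|)$) remainder after division by $|\log\epsilon|$.

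For the stray field I would use the Fourier representation recalled in the introduction. First pass from $m_h(x',x_3)$ to its $x_3$-average $\overline{m}_h(x')$: decomposing $m_h=\overline{m}_h+\tilde m_h$ with $\int_0^h\tilde m_h\,\d x_3=0$ and the corresponding splitting of the stray-field potential, the fluctuation's contribution is non-negative and is dropped in a lower bound. The Taylor expansions $1-g_h(|\xi'|)=\pi h|\xi'|+O(h^2|\xi'|^2)$ and $g_h(|\xi'|)\to 1$ then give
\[
\int_{\R^3}|\nabla u_h|^2\,\d x \;\geq\; \tfrac{h^2|\log h|}{2\pi}\int_{\partial\Omega}(\overline{m}'_h\cdot\nu')^2\,\d\mathcal{H}^1 + h\int_\Omega \overline{m_{h,3}^2}\,\d x' - R_h,
\]
with $R_h$ collecting the nonlocal $\dot H^{-1/2}$-residual and the quadratic Taylor error. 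Using $\epsilon=\eta^2/(h|\log h|)$, the boundary term reproduces the penalty $\frac{1}{2\pi\epsilon|\log\epsilon|}\int_{\partial\Omega}(v\cdot\nu')^2\,\d\mathcal{H}^1$. To convert $\overline{m_{h,3}^2}$ into the Ginzburg-Landau potential $(1-|v|^2)^2$, I exploit $|m_h|=1$: averaging $|m'_h|^2+m_{h,3}^2=1$ over $x_3$ and using Jensen $|\overline{m}'_h|^2\leq\overline{|m'_h|^2}$ gives $1-|v|^2=\overline{m_{h,3}^2}+\mathrm{Var}_{x_3}(m'_h)$, whence $(1-|v|^2)^2\leq 1-|v|^2\leq \overline{m_{h,3}^2}+\mathrm{Var}_{x_3}(m'_h)$. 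The variance is controlled by the Poincar\'e inequality on $(0,h)$:
\[
\int_\Omega \mathrm{Var}_{x_3}(m'_h)\,\d x' \leq \tfrac{h}{\pi^2}\int_{\Omega_h}|\partial_3 m_h|^2\,\d x \leq Ch^2|\log\epsilon|,
\]
so that $\frac{1}{\eta^2|\log\epsilon|}\int_\Omega \mathrm{Var}_{x_3}(m'_h)\leq \frac{Ch}{\epsilon|\log h|}$, which is $o_h(1)$ in \eqref{DEF_regime3D} (since $\epsilon|\log h|\gg 1$) and $o_h(1/|\log\epsilon|)$ in \eqref{DEF_regime3D_log} (since $h|\log\epsilon|\to 0$ and $\epsilon|\log h|\gg\log|\log h|$).

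The main technical obstacle is the stray-field reduction: one must justify the passage from $m_h$ to $\overline{m}_h$ in the 3D Fourier representation (identifying the fluctuation's contribution as non-negative via orthogonality of modes) and keep the Taylor expansion of $g_h$ and the bound on the $\dot H^{-1/2}$-residual sharp enough to preserve the $o_h(1/|\log\epsilon|)$ rate in regime \eqref{DEF_regime3D_log}, where $\epsilon$ is only marginally above $\log|\log h|/|\log h|$. The equality case is then a direct consequence: under $m_h=m_h(x')$ and $m_{h,3}\equiv 0$, one has $\overline{m_{h,3}^2}\equiv 0$, $\mathrm{Var}_{x_3}(m'_h)\equiv 0$, $(1-|v|^2)^2\equiv 0$ and $|v|\equiv 1$, so each inequality above becomes an equality modulo residuals of the same negligible order; the matching upper bound follows by retracing the chain in the reverse direction.
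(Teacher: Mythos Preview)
Your strategy is correct and matches the paper's, but the paper's execution is much shorter because it outsources the entire non-DMI reduction to \cite{IK22}. Concretely, the paper writes
\[
E_h(m_h)=E_h^0(m_h)+\frac{1}{h\eta^2|\log\epsilon|}\int_{\Omega_h}\widehat D:\nabla m_h\wedge m_h\,\d x,
\]
invokes \cite[Theorem~1 and (1.11)]{IK22} to get $E_h^0(m_h)\geq \frac{1}{|\log\epsilon|}E^0_{\epsilon,\eta}(\overline m_h')-o_h(R_1(h))$ (with equality in the $x_3$-invariant in-plane case), and then treats only the DMI correction: Lemma~\ref{lemma_reduction_dmi} replaces the 3D DMI by $\frac{1}{\eta^2}\int_\Omega\widehat D':\nabla'\overline m_h\wedge\overline m_h$ up to $O(R(h))$, and a further Cauchy--Schwarz step isolates the $\widehat D_{13},\widehat D_{23}$ contributions and converts them into $2\int_\Omega\delta\cdot\nabla'\overline m_h'\wedge\overline m_h'$. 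Your DMI treatment is essentially this same computation.

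Where you diverge is in redoing the stray-field reduction from scratch instead of citing it. Your outline (Jensen for the exchange, Fourier/Taylor for the stray field, the identity $1-|\overline m_h'|^2=\overline{m_{h,3}^2}+\mathrm{Var}_{x_3}(m_h')$, Poincar\'e for the variance) is the right skeleton and is indeed how \cite{IK22} proceeds. One point in your sketch is too quick, though: the claim that the fluctuation $\tilde m_h=m_h-\overline m_h$ contributes a non-negative, decoupled piece to the stray-field energy ``via orthogonality of modes'' does not follow from averaging alone. The potential $\bar u_h$ generated by $\overline m_h\mathds 1_{\Omega_h}$ genuinely depends on $x_3$ (through the surface charges at $x_3=0,h$), so the cross term $\int_{\R^3}\nabla\bar u_h\cdot\nabla\tilde u_h=\int_{\Omega_h}\nabla\bar u_h\cdot\tilde m_h$ has no reason to vanish. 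In \cite{IK22} this is handled by working on the full 3D Fourier side (or by a duality argument) rather than by a clean average/fluctuation split; that is precisely the ``technical obstacle'' you flag, and it is where the quoted result does the real work. If you want a self-contained proof, you should replace the orthogonality claim by that argument; otherwise, citing \cite{IK22} as the paper does is the efficient route.
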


Therefore, in order to determine the asymptotic behaviour of~$E_h$ as~$h \rightarrow 0$, we need to analyse the behaviour of the two-dimensional reduced functional~$E_{\epsilon,\eta}^\delta$ in the regime~\eqref{DEF_regime3D}, respectively in the regime~\eqref{DEF_regime3D_log}.

\subsection{Gamma-convergence of the two-dimensional reduced functional~$E_{\varepsilon,\eta}^\delta$}
\label{INTROSECTION_GC_2D_energy}

In this section, we prove~$\Gamma$-convergence results (at the first and second order) for the reduced functional~$E_{\epsilon,\eta}^\delta$ defined in~\eqref{DEF_Eepsetadelta} in the regimes~\eqref{DEF_regime3D} and~\eqref{DEF_regime3D_log}, respectively. These results are reminiscent from Ignat-Kurzke~\cite{IK21}, who studied the energy functional~$E_{\epsilon,\eta}^0$ (i.e. the case~$\delta=0$). 
At the first order, the~$\Gamma$-limit energy is expected to quantify the number of boundary vortices detected by the global Jacobian introduced in~\cite{IK21},~\cite{IK22}.
More precisely, for a two-dimensional \linebreak map~$v \in H^1(\Omega,\R^2)$ defined in a bounded~$C^{1,1}$ domain~$\Omega \subset \R^2$, the \textbf{global Jacobian} of~$v$ is given by the linear operator~$\mathcal{J}(v) \colon W^{1,\infty}(\Omega) \rightarrow \R$ defined as
\begin{align*}
\langle \mathcal{J}(v),\zeta \rangle
= - \int_\Omega v \wedge \nabla' v \cdot \nabla'^\perp \zeta \ \d x',
\end{align*}
for every Lipschitz function~$\zeta \colon \Omega \rightarrow \R$, where~$v \wedge \nabla' v = (v \wedge \dr_1 v , v \wedge \dr_2 v)$, $\nabla'^\perp = (-\dr_2,\dr_1)$ and~$\langle \cdot,\cdot \rangle$ stands for the algebraic dual pairing between~$(W^{1,\infty}(\Omega))^\ast$ and~$W^{1,\infty}(\Omega)$. In particular, the global Jacobian has zero average, i.e.~$\langle \mathcal{J}(v),1 \rangle = 0$.
Moreover,
\begin{align*}
\mathcal{J}(v)
=2 \ \mathrm{jac}(v)+\mathcal{J}_\mathrm{bd}(v)
\end{align*}
where~$\mathrm{jac}(v) = \det (\nabla' v) = \dr_1 v \wedge \dr_2 v \in L^1(\Omega)$ is the interior Jacobian of~$v$, and~$\mathcal{J}_\mathrm{bd}(v)$ is the boundary Jacobian of~$v$. For example, if~$v \in C^1(\overline{\Omega},\R^2)$, then~$\langle \mathcal{J}_\mathrm{bd}(v),\zeta \rangle = -\int_{\partial \Omega} v \wedge \partial_\tau v \zeta \ \d \mathcal{H}^1$ where~$\tau=(\nu')^\perp$ is the tangent unit vector at~$\partial\Omega$ (for more details, see~\cite[Section~2]{IK21}).

For a fixed vector~$\delta \in \R^2$, we start by analysing the energy functional~$E_{\epsilon,\eta}^\delta$ in the asymptotic regime:
\begin{align}
\label{DEF_regime2D}
& \eta \ll 1,
\ \ 
\epsilon \ll 1,
\ \ 
\lv \log \epsilon \rv \ll \lv \log \eta \rv,
\end{align}
which is implied\footnote{\label{foot} Indeed, the regime~\eqref{DEF_regime3D} implies~$h \ll \eta^2 \ll h \lv \log h \rv \ll 1$ by using the definition of~$\epsilon$, hence~$\lv \log h \rv \sim \lv \log \eta \rv$. It follows from~\eqref{DEF_regime3D} that~$\frac{1}{\epsilon} \ll \lv \log \eta \rv$, hence~\eqref{DEF_regime2D} is satisfied because~$\lv \log \epsilon \rv \ll \frac{1}{\epsilon}$.} by the regime~\eqref{DEF_regime3D}.
For a family~$E_{\epsilon,\eta}^\delta(v_\epsilon) \leqslant C \lv \log \epsilon \rv$, we prove that the global Jacobian~$\mathcal{J}(v_\epsilon)$ concentrates on Dirac masses at the boundary vortices (up to a diffuse measure carried by the curvature of~$\partial\Omega$). Moreover, the lower bound of~$E_{\epsilon,\eta}^\delta(v_\epsilon)$ quantifies the number of these boundary vortices. We also show~$L^p(\partial\Omega)$ compactness of the traces~$v_\epsilon \vert_{\partial\Omega}$ converging to a map in~$BV(\partial\Omega,\mathbb{S}^1)$.

\begin{theorem}[Compactness at the boundary and lower bound at the first order]
\label{THM_1.2_IK21+DMI}
Let~$\delta \in \R^2$,~$\Omega \subset \mathbb{R}^2$ be a bounded, simply connected~$C^{1,1}$ domain and~$\kappa$ be the curvature of~$\partial \Omega$. Assume~$\epsilon \rightarrow 0$ and~$\eta=\eta(\epsilon) \rightarrow 0$ in the regime~\eqref{DEF_regime2D}. 
Let~$(v_\epsilon)_\epsilon$ be a family in~$H^1(\Omega,\mathbb{R}^2)$ such that
\begin{align}
\label{EQ_limsup_order1_2D}
\limsup\limits_{\epsilon \rightarrow 0} \frac{1}{\left\vert \log \epsilon \right\vert} E_{\epsilon,\eta}^\delta(v_\epsilon) < +\infty.
\end{align}
Then the following statements hold.

\begin{itemize}
\item[(i)] \textbf{Compactness of global Jacobians and~$L^p(\partial \Omega)$-compactness of~$v_\epsilon \vert_{\partial \Omega}$.}
\\
For a subsequence~$\epsilon \rightarrow 0$,~$(\mathcal{J}(v_\epsilon))_\epsilon$ converges to a measure~$J$ on the closure~$\overline{\Omega}$, in the sense that
\begin{align}
\label{EQ_convergence_jacobian_2D}
\lim\limits_{\epsilon \rightarrow 0} \left( \sup\limits_{\left\vert \nabla' \zeta \right\vert \leqslant 1 \text{ in } \Omega} \left\vert \langle \mathcal{J}(v_\epsilon)-J,\zeta \rangle \right\vert \right) =0.
\end{align}
Moreover,~$J$ is supported on~$\partial \Omega$ and has the form\footnote{Note that the condition~$\langle \mathcal{J},1 \rangle=0$ and the Gauss-Bonnet formula~$\int_{\partial \Omega} \kappa \ \d \mathcal{H}^1=2\pi$ yield the constraint:~$\sum_{j=1}^N d_j=2$.}
\begin{align}
\label{EQ_limit_jacobian_2D}
J=-\kappa \mathcal{H}^1 \llcorner \partial \Omega + \pi \sum\limits_{j=1}^N d_j\Dirac_{a_j}
\end{align}
for~$N \geqslant 1$ distinct boundary vortices~$a_j \in \partial \Omega$ carrying the multiplicities~$d_j \in \mathbb{Z} \setminus \left\lbrace 0 \right\rbrace$, for~$j \in \left\lbrace 1,...,N \right\rbrace$, such that~$\sum_{j=1}^N d_j = 2$. 
\\
Furthermore, for a subsequence, the family of traces~$(v_\epsilon \vert_{\partial \Omega})_\epsilon$ converges to~$e^{i\phir_0} \in BV(\partial \Omega, \mathbb{S}^1)$ in~$L^p(\partial \Omega,\R^2)$, for every~$p \in [1,+\infty)$, where~$\phir_0$ is a~BV lifting of the tangent field~$\pm \tau$ on~$\partial \Omega$ determined (up to a constant in~$\pi\Z$) by
\begin{equation*}
\partial_\tau\phir_0 = \kappa \mathcal{H}^1 \llcorner \dr\Omega - \pi \sum\limits_{j=1}^N d_j \Dirac_{a_j} \ \ \text{ as measure on } \partial \Omega.
\end{equation*}
\item[(ii)] \textbf{Energy lower bound at the first order.}
\\
If~$(\mathcal{J}(v_\epsilon))_\epsilon$ satisfies the convergence assumption in~\textit{(i)} as~$\epsilon \rightarrow 0$, then the energy lower bound at the first order is the total mass of the measure~$J + \kappa \mathcal{H}^1 \llcorner \partial \Omega$ on~$\partial \Omega$:
\begin{equation*}
\liminf\limits_{\epsilon \rightarrow 0} \frac{1}{\left\vert \log \epsilon \right\vert} E_{\epsilon,\eta}^\delta(v_\epsilon) \geqslant \pi \sum\limits_{j=1}^N \left\vert d_j \right\vert = \left\vert J + \kappa \mathcal{H}^1 \llcorner \partial \Omega \right\vert (\partial \Omega).
\end{equation*}
\end{itemize}
\end{theorem}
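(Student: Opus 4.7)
The plan is to reduce the statement to the already-known $\delta=0$ case from Ignat--Kurzke~\cite{IK21}, by showing that the DMI contribution is of strictly lower order than the leading $|\log\epsilon|$ scale. The key observation is that, under the first-order bound~\eqref{EQ_limsup_order1_2D}, the cross-term $2\int_\Omega \delta\cdot\nabla' v_\epsilon\wedge v_\epsilon$ is only of order $\sqrt{|\log\epsilon|}$, hence negligible after division by $|\log\epsilon|$.

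First I would derive a priori estimates. The Ginzburg--Landau penalisation in~\eqref{DEF_Eepsetadelta} combined with~\eqref{EQ_limsup_order1_2D} gives $\int_\Omega (1-|v_\epsilon|^2)^2\leq C\eta^2|\log\epsilon|$. Since $|\log\epsilon|\ll|\log\eta|$ in the regime~\eqref{DEF_regime2D}, this bound tends to $0$; in particular $\|v_\epsilon\|_{L^2(\Omega)}^2\leq C$. Applying Young's inequality to the DMI density,
$$\bigl|2\delta\cdot\nabla' v_\epsilon\wedge v_\epsilon\bigr|\leq \tfrac{1}{2}|\nabla' v_\epsilon|^2+2|\delta|^2|v_\epsilon|^2,$$
and integrating over $\Omega$ yields $\tfrac{1}{2}\int_\Omega|\nabla' v_\epsilon|^2\leq E^\delta_{\epsilon,\eta}(v_\epsilon)+2|\delta|^2\|v_\epsilon\|_{L^2}^2\leq C|\log\epsilon|$. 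Cauchy--Schwarz then gives
$$\left|2\int_\Omega\delta\cdot\nabla' v_\epsilon\wedge v_\epsilon\right|\leq 2|\delta|\,\|\nabla' v_\epsilon\|_{L^2}\,\|v_\epsilon\|_{L^2}\leq C\sqrt{|\log\epsilon|}=o(|\log\epsilon|).$$
Consequently $E^0_{\epsilon,\eta}(v_\epsilon)=E^\delta_{\epsilon,\eta}(v_\epsilon)-2\int_\Omega\delta\cdot\nabla' v_\epsilon\wedge v_\epsilon$ also satisfies $\limsup_{\epsilon\to 0}\tfrac{1}{|\log\epsilon|}E^0_{\epsilon,\eta}(v_\epsilon)<+\infty$, and
$$\tfrac{1}{|\log\epsilon|}E^\delta_{\epsilon,\eta}(v_\epsilon)=\tfrac{1}{|\log\epsilon|}E^0_{\epsilon,\eta}(v_\epsilon)+o_\epsilon(1).$$

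At this point the compactness statement~(i) and lower bound~(ii) for $E^0_{\epsilon,\eta}$ are precisely the content of the corresponding result of Ignat--Kurzke~\cite{IK21}; invoking it as a black box on $(v_\epsilon)$ yields the desired subsequential convergence of $\mathcal{J}(v_\epsilon)$ to a measure $J$ of the form~\eqref{EQ_limit_jacobian_2D} with constraint $\sum_j d_j=2$, the $L^p(\partial\Omega)$ convergence of the traces to some $e^{i\varphi_0}\in BV(\partial\Omega,\mathbb{S}^1)$ with the prescribed tangential derivative, and the lower bound $\liminf_{\epsilon\to 0}\tfrac{1}{|\log\epsilon|}E^0_{\epsilon,\eta}(v_\epsilon)\geq \pi\sum_j|d_j|$. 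The corresponding conclusions for $E^\delta_{\epsilon,\eta}$ then follow from the $o_\epsilon(1)$ identity above.

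The only delicate point is the uniform bound on $\int_\Omega|\nabla' v_\epsilon|^2$: because the DMI contribution could a priori carry an unfavourable sign, this control is not immediate from~\eqref{EQ_limsup_order1_2D}, and it requires both the Young's-inequality absorption and the regime-specific smallness $\eta^2|\log\epsilon|\to 0$ (inherited from~\eqref{DEF_regime2D}). Beyond that step the argument is a transparent perturbation of the $\delta=0$ theory; in particular, since the DMI correction is of strict order $\sqrt{|\log\epsilon|}$, it does not affect the location or multiplicity of the limiting boundary vortices at the first order, which remain entirely encoded in the global Jacobian.
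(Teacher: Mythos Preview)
Your strategy is the same as the paper's: reduce to the $\delta=0$ case of Ignat--Kurzke~\cite{IK21} by showing the DMI term is lower order. The paper routes this through the $\mathbb{S}^1$-approximation (Theorem~\ref{THM_3.1_IK21+DMI}) and the lifting functional $\mathcal{G}_\epsilon^\delta$ (Theorem~\ref{THM_4.2.1_IK21+DMI}), whereas you invoke~\cite{IK21} directly on $v_\epsilon$; both are fine. For the lower bound, you use the Cauchy--Schwarz estimate $|\int_\Omega\delta\cdot\nabla v_\epsilon\wedge v_\epsilon|=O(\sqrt{|\log\epsilon|})$, while the paper uses a $(1+\sigma)$-absorption trick on $\mathcal{G}_\epsilon^\delta$ and sends $\sigma\to 0$; your estimate is actually sharper and more transparent.

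There is, however, a small circularity in your a priori estimates. You write that the energy bound directly gives $\int_\Omega(1-|v_\epsilon|^2)^2\le C\eta^2|\log\epsilon|$, and from this deduce $\|v_\epsilon\|_{L^2}^2\le C$, which you then feed into the Young-inequality absorption to control $\int|\nabla v_\epsilon|^2$. But the Ginzburg--Landau bound cannot be read off first: the DMI term may be negative, so $E_{\epsilon,\eta}^\delta(v_\epsilon)\le C|\log\epsilon|$ does not by itself bound any single nonnegative term. The fix is to do the absorption in one step, exactly as in Lemma~\ref{LEM_for_THM_3.1_IK21+DMI}: split $\Omega$ into $\{|v_\epsilon|^2\ge 2\}$ (where $|v_\epsilon|^2\le 2(1-|v_\epsilon|^2)^2$) and its complement, so that $2|\delta|^2\int_\Omega|v_\epsilon|^2\le 4|\delta|^2\int_\Omega(1-|v_\epsilon|^2)^2+4|\delta|^2|\Omega|$; for small $\eta$ the first piece is absorbed by $\tfrac{1}{2\eta^2}\int(1-|v_\epsilon|^2)^2$, and one obtains $E_{\epsilon,\eta}^0(v_\epsilon)\le 2E_{\epsilon,\eta}^\delta(v_\epsilon)+C$ in a single stroke. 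You clearly see the issue (your last paragraph flags it), but your written order of implications needs this correction.
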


In order to get a lower bound at the second order for~$E_{\epsilon,\eta}^\delta$, we introduce the following renormalised energy, in the spirit of Brezis-Bethuel-H\'elein~\cite{BBH}, that captures the interaction energy between boundary vortices. In comparison with Ignat-Kurzke~\cite{IK21},~\cite{IK22}, our renormalised energy includes the contribution of the~DMI energy.

\begin{definition}
\label{DEF_phi0}
Let~$\delta \in \R^2$,~$\Omega \subset \mathbb{R}^2$ be a bounded, simply connected~$C^{1,1}$ domain and~$\kappa$ be the curvature of~$\partial \Omega$.
Consider~$\phir_0 \colon \partial \Omega \rightarrow \mathbb{R}$ to be a~BV function such that~$e^{i\phir_0} \cdot \nu'=0$ \linebreak in~$\partial \Omega \setminus \left\lbrace a_1,...,a_N \right\rbrace$, and
\begin{align*}
\partial_\tau\phir_0 = \kappa \mathcal{H}^1 \llcorner \dr\Omega - \pi \sum\limits_{j=1}^N d_j \Dirac_{a_j} \ \ \text{ as measure on } \partial \Omega,
\end{align*}
for~$N \geqslant 1$ distinct points~$a_j \in \partial \Omega$ carrying the multiplicities~$d_j \in \lb \pm 1 \rb$ for~$j \in \left\lbrace 1,...,N \right\rbrace$ with~$\sum_{j=1}^Nd_j=2$.
If~$\phir_\ast$ is the harmonic extension of~$\phir_0$ to~$\Omega$, then the renormalised energy of~$\left\lbrace(a_j,d_j)\right\rbrace_{j \in \left\lbrace 1,...,N \right\rbrace}$ is defined as
\begin{align}
\label{DEF_renormalised_energy}
W_\Omega^\delta(\left\lbrace(a_j,d_j)\right\rbrace)=\liminf\limits_{r \rightarrow 0} \left( \int_{\Omega \setminus \bigcup_{j=1}^N B_r(a_j)} \left( \left\vert \nabla' \phir_\ast \right\vert^2 - 2 \delta \cdot \nabla' \phir_\ast \right) \d x -N\pi\log \frac{1}{r} \right),
\end{align}
where~$B_r(a_j)$ is the disk of center~$a_j$ and radius~$r>0$.
\end{definition}

We will see that the~$\liminf$ in~\eqref{DEF_renormalised_energy} is in fact a limit (see Proposition~\ref{prop-expr-renenergy-psi-R}). In the following two theorems, we prove the asymptotic expansion at the second order for the energy~$E_{\epsilon,\eta}^\delta$ by the~$\Gamma$-convergence method. In particular, under a prescribed level of energy (see~\eqref{EQ_limsup_order2_2D}), we prove that the boundary vortices have multiplicities~$\pm 1$ and that the second order energy is given by the renormalised energy~\eqref{DEF_renormalised_energy} up to a constant depending on the number of boundary vortices.

\begin{theorem}[Compactness in the interior and lower bound at the second order]
\label{THM_1.4_IK21+DMI}
Let~$\delta \in \R^2$,~$\Omega \subset \mathbb{R}^2$ be a bounded, simply connected~$C^{1,1}$ domain and~$\kappa$ be the curvature of~$\partial \Omega$. Assume~$\epsilon \rightarrow 0$ and~$\eta=\eta(\epsilon) \rightarrow 0$ in the regime~\eqref{DEF_regime2D}.
Let~$(v_\epsilon)_\epsilon$ be a family in~$H^1(\Omega,\mathbb{R}^2)$ satisfying~\eqref{EQ_limsup_order1_2D} and the convergence at~\textit{(i)} in Theorem~\ref{THM_1.2_IK21+DMI} to the measure~$J$ given at~\eqref{EQ_limit_jacobian_2D} as~$\epsilon \rightarrow 0$. In addition, we assume the following more precise bound than~\eqref{EQ_limsup_order1_2D}:
\begin{align}
\label{EQ_limsup_order2_2D}
\limsup\limits_{\epsilon \rightarrow 0} \left( E_{\epsilon,\eta}^\delta(v_\epsilon) - \pi \left\vert \log \epsilon \right\vert \sum\limits_{j=1}^N \left\vert d_j \right\vert \right) < +\infty.
\end{align}
Then the following statements hold.
\begin{itemize}
\item[(i)] \textbf{Single multiplicity and second order lower bound.}
\\
The multiplicities satisfy~$d_j \in \left\lbrace \pm 1 \right\rbrace$ for every~$j \in \left\lbrace 1,...,N \right\rbrace$, so we have\footnote{Recall that~$\sum_{j=1}^N d_j=2$.}~$\sum_{j=1}^N \left\vert d_j \right\vert =~N$; moreover, the following second order energy lower bound holds:
\begin{align*}
\liminf\limits_{\epsilon \rightarrow 0} \left( E_{\epsilon,\eta}^\delta(v_\epsilon) -N\pi\lv \log \epsilon \right\vert \right) \geqslant W_\Omega^\delta(\left\lbrace (a_j,d_j) \right\rbrace) + N\gamma_0
\end{align*}
where~$\gamma_0=\pi\log\frac{e}{4\pi}$ and~$W_\Omega^\delta(\lb(a_j,d_j)\rb)$ is the renormalised energy defined in~\eqref{DEF_renormalised_energy}.

\item[(ii)] \textbf{$W^{1,q}(\Omega)$-weak compactness of maps~$v_\epsilon$.}
\\
For any~$q \in [1,2)$, $(v_\epsilon)_\epsilon$ is uniformly bounded in~$W^{1,q}(\Omega,\mathbb{R}^2)$.
Moreover, for a subsequence,~$(v_\epsilon)_\epsilon$ converges weakly in~$W^{1,q}(\Omega,\mathbb{R}^2)$, for every~$q \in [1,2)$, and strongly in~$L^p(\Omega,\mathbb{R}^2)$, for every~$p \in [1,+\infty)$, to~$e^{i\widehat{\phir}_0}$, where~$\widehat{\phir}_0 \in W^{1,q}(\Omega)$ is an extension (not necessarily harmonic) to~$\Omega$ of the lifting~$\phir_0 \in BV(\partial \Omega,\pi \mathbb{Z})$ determined in Theorem \ref{THM_1.2_IK21+DMI}\textit{(i)}.
\end{itemize}
\end{theorem}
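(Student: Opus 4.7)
The identity
\[
|\nabla' v|^2 + 2\delta \cdot \nabla' v \wedge v = |(\nabla' - i\delta)v|^2 - |\delta|^2 |v|^2
\]
shows that $E_{\epsilon,\eta}^\delta(v)$, modulo the uniformly bounded term $|\delta|^2\int_\Omega |v|^2\,\d x \leq |\delta|^2|\Omega|$, is a Ginzburg-Landau energy associated with the magnetic covariant derivative $\nabla' - i\delta$, coupled with the boundary penalty favoring boundary vortices. Since $\delta \in \R^2$ is fixed (independent of $\epsilon$), this is only a gauge-like lower-order perturbation of the $\delta = 0$ functional of \cite{IK21}: the divergent $|\log \epsilon|$-asymptotics are preserved, and $\delta$ enters only through the definition of $W_\Omega^\delta$. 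My plan is to adapt the \cite{IK21} blueprint --- a ball construction around the $a_j$'s, sharp isolation of the core contribution, and weak compactness on the complement of the cores --- while carrying the DMI contribution as a lower-order correction throughout.

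For part \textit{(i)}, fix a small $r>0$ so that the half-balls $B_r(a_j)\cap\overline\Omega$ are pairwise disjoint. Inside each one, the Jacobian concentration $\mathcal{J}(v_\epsilon)\to\pi d_j\Dirac_{a_j}$ from Theorem \ref{THM_1.2_IK21+DMI} forces a boundary vortex of degree $d_j$, and the sharp boundary-vortex lower bound of \cite{IK21} (a ball construction together with a localised boundary Ginzburg-Landau analysis matched to the penalty at scale $\epsilon$) supplies a local lower bound of the form $\pi d_j^2(|\log\epsilon|-|\log r|)+\gamma_0+o_\epsilon(1)$; the DMI piece on $B_r(a_j)$ is $o_r(1)$ by Cauchy-Schwarz against the first-order bound $\int_\Omega|\nabla v_\epsilon|^2\,\d x = O(|\log\epsilon|)$ and $|B_r(a_j)|=O(r^2)$. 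Summing over $j$, subtracting $N\pi|\log\epsilon|$, and using $d_j^2\geq|d_j|$ with equality iff $|d_j|=1$, the excess would diverge unless $|d_j|=1$ for every $j$, contradicting \eqref{EQ_limsup_order2_2D}; hence all $|d_j|=1$ and $\sum_j|d_j|=N$. Outside $\Omega_r:=\Omega\setminus\bigcup_j B_r(a_j)$, the Ginzburg-Landau penalty pushes $|v_\epsilon|\to 1$, so a lifting $v_\epsilon=\rho_\epsilon e^{i\phir_\epsilon}$ is available on $\Omega_r$. The $H^1(\Omega_r)$ bound on $\phir_\epsilon$ together with the $L^p(\partial\Omega)$ trace convergence of Theorem \ref{THM_1.2_IK21+DMI} yield $\phir_\epsilon\rightharpoonup\phir_\ast$ weakly in $H^1(\Omega_r)$, with $\phir_\ast$ the harmonic extension of Definition \ref{DEF_phi0}; weak lower semicontinuity of the convex functional $w\mapsto\int_{\Omega_r}(|\nabla w|^2-2\delta\cdot\nabla w)\,\d x$ then gives
\[
\liminf_{\epsilon\to 0}\int_{\Omega_r}\bigl(|\nabla'\phir_\epsilon|^2-2\delta\cdot\nabla'\phir_\epsilon\bigr)\,\d x \geq \int_{\Omega_r}\bigl(|\nabla'\phir_\ast|^2-2\delta\cdot\nabla'\phir_\ast\bigr)\,\d x.
\]
Adding the inside and outside lower bounds, subtracting $N\pi|\log\epsilon|$, and letting $r\to 0$ (invoking the $\liminf$ definition in \eqref{DEF_renormalised_energy}) produces the claimed lower bound $W_\Omega^\delta(\{(a_j,d_j)\})+N\gamma_0$.

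For part \textit{(ii)}, the second-order bound combined with a Jerrard--Sandier-type dyadic annular decomposition around each $a_j$ produces $\int_{B_{2\rho}(a_j)\setminus B_\rho(a_j)}|\nabla v_\epsilon|^2\,\d x \leq C$ uniformly for $\rho\gg\epsilon$; H\"older interpolation against the trivial $L^\infty$ bound on $v_\epsilon$ then yields a uniform $W^{1,q}(\Omega,\R^2)$ bound for every $q<2$. Rellich--Kondrachov gives strong $L^p(\Omega)$ compactness for every $p<+\infty$ along a subsequence; the Ginzburg-Landau penalty $\frac{1}{\eta^2}\int(1-|v_\epsilon|^2)^2\,\d x$ forces the limit to be $\mathbb{S}^1$-valued, and the trace convergence in Theorem \ref{THM_1.2_IK21+DMI}\textit{(i)} identifies it as $e^{i\widehat{\phir}_0}$ for some $W^{1,q}$ extension of $\phir_0$. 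The main obstacle throughout is matching error scales quantitatively: the DMI contribution on every shrinking ball $B_r(a_j)$ must be made subleading uniformly in the window $\epsilon\ll r$, which forces an iterative structure whereby the first-order compactness of Theorem \ref{THM_1.2_IK21+DMI} feeds the quantitative $W^{1,q}$-type bounds into the second-order analysis.
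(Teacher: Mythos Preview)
Your overall architecture (inner/outer splitting, core lower bound, weak lower semicontinuity on $\Omega_r$) is the right picture, but the DMI bookkeeping contains a genuine gap that makes the argument circular as written.

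The claim that ``the DMI piece on $B_r(a_j)$ is $o_r(1)$ by Cauchy--Schwarz against $\int_\Omega|\nabla v_\epsilon|^2=O(|\log\epsilon|)$ and $|B_r(a_j)|=O(r^2)$'' is false: Cauchy--Schwarz yields at best
\[
\Bigl|\int_{B_r(a_j)}\delta\cdot\nabla v_\epsilon\wedge v_\epsilon\,\d x\Bigr|
\;\lesssim\;\|\nabla v_\epsilon\|_{L^2(B_r(a_j))}\,\|v_\epsilon\|_{L^2(B_r(a_j))}
\;\lesssim\; r\sqrt{|\log\epsilon|},
\]
which blows up as $\epsilon\to0$ for each fixed $r$. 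To get $o_r(1)$ uniformly in $\epsilon$ you need H\"older against a uniform $L^q$ bound on $\nabla v_\epsilon$ with $q<2$ (giving $O(r^{2/3})$ for $q=3/2$); but that bound is precisely part \textit{(ii)} of the theorem, so you cannot use it to prove part \textit{(i)}. Your last sentence acknowledges an ``iterative structure'' feeding $W^{1,q}$ back in, but Theorem~\ref{THM_1.2_IK21+DMI} only supplies $L^p(\partial\Omega)$ compactness and Jacobian convergence, not any gradient bound, so there is nothing to bootstrap from. A secondary issue: the local lifting $v_\epsilon=\rho_\epsilon e^{i\phir_\epsilon}$ on $\Omega_r$ requires $\rho_\epsilon>0$, which the penalty $\frac{1}{\eta^2}\int(1-|v_\epsilon|^2)^2=O(|\log\epsilon|)$ does not guarantee.

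The paper breaks this circularity by first passing through the global $\mathbb{S}^1$-valued approximation $V_\epsilon$ (Theorem~\ref{THM_3.1_IK21+DMI}) and its global lifting $\phir_\epsilon$ (Lemma~\ref{LEM_4.1_IK21+DMI}). The point is that for the lifting the DMI term is an exact divergence,
\[
\int_\Omega\delta\cdot\nabla V_\epsilon\wedge V_\epsilon\,\d x
=-\int_\Omega\delta\cdot\nabla\phir_\epsilon\,\d x
=-\int_{\partial\Omega}(\phir_\epsilon-\pi z_\epsilon)\,\delta\cdot\nu\,\d\mathcal{H}^1
=O(1),
\]
the last bound coming only from the first-order $L^1(\partial\Omega)$ compactness of Theorem~\ref{THM_4.2.1_IK21+DMI}. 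This single identity immediately gives $\mathcal{G}_\epsilon^0(\phir_\epsilon)\leq\mathcal{G}_\epsilon^\delta(\phir_\epsilon)+C$, so the second-order hypothesis transfers to the $\delta=0$ problem, and \cite{IK21} then supplies both $d_j\in\{\pm1\}$ and the uniform $W^{1,q}$ bound. Only \emph{after} that does one return to H\"older on the small balls to get the sharp $W_\Omega^\delta$ lower bound. You should reorganise your argument along these lines: first globalise and lift, then reduce to $\delta=0$ via the boundary integral, then do the inner/outer analysis.
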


Now we state the upper bound part in the~$\Gamma$-expansion where the recovery sequence~$v_\epsilon$ can be chosen with values into~$\mathbb{S}^1$:

\begin{theorem}[Upper bound]
\label{THM_1.5_IK21+DMI}
Let~$\delta \in \R^2$,~$\Omega \subset \mathbb{R}^2$ be a bounded, simply connected~$C^{1,1}$ domain and~$\kappa$ be the curvature of~$\partial \Omega$. Let~$\left\lbrace a_j \in \partial\Omega \right\rbrace_{1 \leqslant j \leqslant N}$ be $N \geqslant 1$ distinct points and~$d_j \in \mathbb{Z} \setminus \left\lbrace 0 \right\rbrace$ be the corresponding multiplicities, for~$j \in \left\lbrace 1,...,N \right\rbrace$, that satisfy~$\sum_{j=1}^N d_j=2$. Assume~$\epsilon \rightarrow 0$ and~$\eta=\eta(\epsilon) \rightarrow 0$ in the regime~\eqref{DEF_regime2D}.
Then we can construct a family~$(v_\epsilon)_\epsilon$ in~$H^1(\Omega,\mathbb{S}^1)$ such that~$(\mathcal{J}(v_\epsilon))_\epsilon$ converges as in~\eqref{EQ_convergence_jacobian_2D} to the measure
\begin{equation*}
J=-\kappa \mathcal{H}^1 \llcorner \partial \Omega + \pi \sum_{j=1}^N d_j \Dirac_{a_j}.
\end{equation*}
Furthermore,~$(v_\epsilon)_\epsilon$ converges strongly to~$e^{i\phir_\ast}$ in~$L^p(\Omega,\R^2)$ and in~$L^p(\partial \Omega,\R^2)$, for \linebreak every~$p \in [1,+\infty)$, where~$\phir_\ast$ is the harmonic extension to~$\Omega$ of a boundary lifting~$\phir_0$ of the tangent field~$\pm\tau$ on~$\partial\Omega$ that satisfies~$\partial_\tau \phir_0 = -J$ as measure on~$\partial \Omega$, and the energy of~$v_\epsilon$ satisfies
\begin{equation*}
\lim\limits_{\epsilon \rightarrow 0} \frac{1}{\left\vert \log \epsilon \right\vert} E_{\epsilon,\eta}^\delta(v_\epsilon) = \pi \sum\limits_{j=1}^N \left\vert d_j \right\vert.
\end{equation*}
Furthermore, if~$d_j \in \left\lbrace \pm 1 \right\rbrace$ for every~$j \in \left\lbrace 1,...,N \right\rbrace$, then~$v_\epsilon$ can be chosen such that
\begin{equation*}
\lim\limits_{\epsilon \rightarrow 0} \left( E_{\epsilon,\eta}^\delta(v_\epsilon) - N\pi \left\vert \log \epsilon \right\vert \right) = W_\Omega^\delta(\left\lbrace (a_j,d_j) \right\rbrace) + N\gamma_0
\end{equation*}
where~$\gamma_0=\pi\log\frac{e}{4\pi}$ and~$W_\Omega^\delta(\lb(a_j,d_j)\rb)$ is the renormalised energy defined in~\eqref{DEF_renormalised_energy}.
\end{theorem}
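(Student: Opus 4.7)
The plan is to reuse the~$\delta=0$ recovery sequence of Ignat-Kurzke~\cite{IK21} and absorb the~DMI contribution as a lower-order linear correction. The cornerstone is that, for any~$v=e^{i\phir}\in H^1(\Omega,\mathbb{S}^1)$, the Ginzburg-Landau potential vanishes and a direct computation gives~$\nabla'v\wedge v=-\nabla'\phir$, hence
\begin{align*}
E_{\epsilon,\eta}^\delta(v) = E_{\epsilon,\eta}^0(v) - 2\int_\Omega \delta\cdot\nabla'\phir \ \d x,
\end{align*}
where the right-hand side is unambiguous because~$\nabla'v\wedge v$ is a single-valued~$L^2$ vector field even if~$\phir$ is only a local (possibly multivalued) lifting. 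Therefore, building a recovery sequence for~$E_{\epsilon,\eta}^\delta$ reduces to building one for~$E_{\epsilon,\eta}^0$ and tracking the linear DMI term.

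First I would invoke the~\cite{IK21} construction for~$E_{\epsilon,\eta}^0$, which provides~$v_\epsilon=e^{i\phir_\epsilon}\in H^1(\Omega,\mathbb{S}^1)$ realising the desired Jacobian convergence to~$J$, the strong~$L^p$ convergence~$v_\epsilon\to e^{i\phir_\ast}$ on~$\Omega$ and on~$\partial\Omega$, and the~$E_{\epsilon,\eta}^0$-asymptotics: the leading order~$\pi\sum_j\lv d_j\rv$ for arbitrary~$d_j\in\Z\setminus\lb 0\rb$, and the second-order expansion~$W_\Omega^0(\lb(a_j,d_j)\rb)+N\gamma_0$ when~$d_j\in\lb\pm 1\rb$. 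Their sequence is explicit:~$\phir_\epsilon\equiv\phir_\ast$ outside small half-disks~$B_r(a_j)\cap\Omega$ with~$r=r(\epsilon)\to 0$, and the local Dirichlet estimate~$\int_{B_r(a_j)\cap\Omega}\lv\nabla'\phir_\epsilon\rv^2\d x=O(\lv\log(r/\epsilon)\rv)$ holds.

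Next I would control the DMI correction by splitting
\begin{align*}
\int_\Omega\delta\cdot\nabla'\phir_\epsilon\ \d x = \int_{\Omega\setminus\bigcup_j B_r(a_j)}\delta\cdot\nabla'\phir_\ast\ \d x + \sum_j\int_{B_r(a_j)\cap\Omega}\delta\cdot\nabla'\phir_\epsilon\ \d x.
\end{align*}
Since~$\lv\nabla'\phir_\ast(x)\rv\lesssim\sum_j\lv x-a_j\rv^{-1}$ near the boundary vortices,~$\nabla'\phir_\ast$ lies in~$L^1(\Omega)$, so dominated convergence gives the outer integral~$\to\int_\Omega\delta\cdot\nabla'\phir_\ast\ \d x$ as~$\epsilon\to 0$. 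For the core integrals, Cauchy-Schwarz combined with~$\lv B_r(a_j)\cap\Omega\rv=O(r^2)$ and the local Dirichlet bound yields~$O(r\sqrt{\lv\log(r/\epsilon)\rv})\to 0$ if~$r=r(\epsilon)$ tends to~$0$ slowly enough (e.g.~$r=\lv\log\epsilon\rv^{-1/2}$). The same~$L^1$-integrability of~$\nabla'\phir_\ast$ shows that the renormalisation in~\eqref{DEF_renormalised_energy} is unnecessary for the linear DMI piece, which delivers the identity
\begin{align*}
W_\Omega^\delta(\lb(a_j,d_j)\rb) = W_\Omega^0(\lb(a_j,d_j)\rb) - 2\int_\Omega\delta\cdot\nabla'\phir_\ast\ \d x.
\end{align*}

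Combining, both claims of the theorem follow. For arbitrary~$d_j$, the DMI correction is~$O(1)=o(\lv\log\epsilon\rv)$, so dividing by~$\lv\log\epsilon\rv$ preserves the leading-order limit~$\pi\sum_j\lv d_j\rv$. For~$d_j\in\lb\pm 1\rb$, the~\cite{IK21} second-order expansion together with the limit of the DMI correction gives
\begin{align*}
\lim\limits_{\epsilon\to 0}\left(E_{\epsilon,\eta}^\delta(v_\epsilon) - N\pi\lv\log\epsilon\rv\right) &= W_\Omega^0(\lb(a_j,d_j)\rb) + N\gamma_0 - 2\int_\Omega\delta\cdot\nabla'\phir_\ast\ \d x
\\
&= W_\Omega^\delta(\lb(a_j,d_j)\rb) + N\gamma_0.
\end{align*}
The only (mild) obstacle will be the bookkeeping inside the vortex cores: extracting the local Dirichlet bound~$O(\lv\log(r/\epsilon)\rv)$ uniformly from the~\cite{IK21} profiles, and confirming that the formally multivalued lifting causes no issue (bypassed by working with~$\nabla'v\wedge v$ directly). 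No genuinely new analytic difficulty arises from the~DMI term because it is linear in~$\nabla v$ and dominated by an~$L^1$ field.
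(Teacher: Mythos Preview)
Your approach is essentially the paper's: both take the $\delta=0$ recovery liftings $\phir_\epsilon$ from \cite{IK21}, set $v_\epsilon=e^{i\phir_\epsilon}$, use the identity $E_{\epsilon,\eta}^\delta(v_\epsilon)=\mathcal{G}_\epsilon^0(\phir_\epsilon)-2\int_\Omega\delta\cdot\nabla\phir_\epsilon\,\d x$, and show the DMI integral is $O(1)$ (first order) and converges to $\int_\Omega\delta\cdot\nabla\phir_\ast\,\d x$ (second order), which together with $W_\Omega^\delta=W_\Omega^0-2\int_\Omega\delta\cdot\nabla\phir_\ast\,\d x$ closes the argument.

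There is one quantitative slip in your core estimate. With $r=\lv\log\epsilon\rv^{-1/2}$ you get $r\sqrt{\lv\log(r/\epsilon)\rv}\approx\lv\log\epsilon\rv^{-1/2}\cdot\lv\log\epsilon\rv^{1/2}=O(1)$, not $o(1)$; you would need $r=o(\lv\log\epsilon\rv^{-1/2})$, and the phrase ``slowly enough'' is backwards. The paper avoids this coupling altogether: it keeps $r>0$ fixed, takes $\epsilon\to 0$ first, then $r\to 0$, and for the core uses the \emph{uniform} bound $\lV\nabla\phir_\epsilon\rV_{L^{3/2}(\Omega)}\leqslant C$ (available from the \cite{IK21} construction) together with H\"older to get $\int_{\Omega\cap B_r(a_j)}\lv\delta\cdot\nabla\phir_\epsilon\rv\,\d x=O(r^{2/3})$ independently of $\epsilon$. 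For the first-order statement with general $d_j$, the paper is even more direct: it integrates by parts, $\int_\Omega\delta\cdot\nabla\phir_\epsilon\,\d x=\int_{\partial\Omega}\phir_\epsilon\,\delta\cdot\nu\,\d\mathcal{H}^1$, and uses only the $L^1(\partial\Omega)$ convergence $\phir_\epsilon\to\phir_\ast$ to get an $O(1)$ bound, bypassing any interior splitting. Your claim that the \cite{IK21} sequence satisfies $\phir_\epsilon\equiv\phir_\ast$ outside the cores is stronger than what the paper actually invokes (only weak $L^1$ convergence of $\nabla\phir_\epsilon$ and $L^{3/2}$ boundedness), but the extra structure is not needed once you adopt the paper's two-parameter limit.
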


\subsection{Minimisation of the renormalised energy}
\label{INTROSECTION_minimisation_renormalised_energy}

In this section, we compute explicitely the renormalised energy~$W_\Omega^\delta$ in terms of the location of boundary vortices~($a_j$), their multiplicities~($d_j$) and the reduced~DMI vector~$\delta \in \R^2$. Then we will analyse the minimisers of~$W_\Omega^\delta$ showing the role of~$\delta$ in the optimal location of boundary vortices. These results are reminiscent from~\cite{IK22} in the absence of the~DMI vector~$\delta$.

\begin{theorem}
\label{THM_6_IK22+DMI}
Let~$\delta \in \R^2$.
\begin{itemize}
\item[\textit{(i)}] We denote by~$B_1$ the unit disk in~$\R^2$. Let~$\lb a_j \in \partial B_1 \rb_{1 \leqslant j \leqslant N}$ be~$N \geqslant 2$ distinct points \linebreak and~$d_j \in \lb \pm 1 \rb$ be the corresponding multiplicities, for~$j \in \lb 1,...,N \rb$, that satisfy~$\sum_{j=1}^N d_j =~2$. Then the renormalised energy of~$\lb(a_j,d_j)\rb$ in~$B_1$ satisfies
\begin{align}
\label{EQ_WB1delta}
W_{B_1}^\delta(\lb (a_j,d_j) \rb)
& = - 2\pi \sum_{1\leqslant j<k \leqslant N} d_jd_k \log \lv a_j-a_k \rv
+2\pi \sum_{j=1}^N d_j \delta \cdot a_j^\perp.
\end{align}
\item[\textit{(ii)}] Let~$\Omega \subset \R^2$ be a bounded, simply connected~$C^{1,1}$ domain with~$\kappa$ the curvature of~$\dr \Omega$ and~$\nu$ the outer unit normal vector on~$\partial\Omega$. Let~$\Phi \colon \overline{B_1} \rightarrow \overline{\Omega}$ be a~$C^1$ conformal diffeomorphism with inverse~$\Psi = \Phi^{-1}$. Let~$\lb a_j \in \partial \Omega \rb_{1 \leqslant j \leqslant N}$ be $N \geqslant 2$ distinct points and~$d_j \in \lb \pm 1 \rb$ be the corresponding multiplicities, for~$j \in \lb 1,...,N \rb$, that satisfy~$\sum_{j=1}^N d_j =2$. Then the renormalised energy of~$\lb(a_j,d_j)\rb$ in~$\Omega$ satisfies
\begin{equation}
\label{EQ_WOmegadelta}
\begin{split}
& W_\Omega^\delta(\lb (a_j,d_j) \rb)
\\
& \qquad = - 2\pi \sum_{1\leqslant j<k \leqslant N} d_jd_k \log \lv \Psi(a_j)-\Psi(a_k) \rv + \pi \sum_{j=1}^N (d_j-1)\log \lv \partial_z\Psi(a_j) \rv
\\
& \qquad \quad + \int_{\dr \Omega}(\kappa + 2 \delta^\perp \cdot \nu')(w) \left( \sum_{j=1}^N d_j \log \lv \Psi(w)-\Psi(a_j) \rv - \log \lv \partial_z\Psi(w) \rv \right) \d \mathcal{H}^1(w)
\end{split}
\end{equation}
where~$\partial_z\Psi$ stands for the complex derivative of~$\Psi \colon \overline{\Omega} \subset \mathbb{C} \rightarrow \overline{B_1} \subset \mathbb{C}$.
\end{itemize}
\end{theorem}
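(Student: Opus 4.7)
The plan is to treat part~(i) by direct computation on the disk and part~(ii) by a conformal pullback that reduces the general domain to~$B_1$, in both cases extending to $\delta\neq 0$ the $\delta=0$ analysis of Ignat-Kurzke~\cite{IK22}.

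For part~(i), I split the renormalised energy into a Dirichlet piece $\int_{B_1 \setminus \cup_j B_r(a_j)} \lv \nabla' \phir_\ast \rv^2 \d x - N\pi\log(1/r)$ and a DMI piece $-2 \int_{B_1 \setminus \cup_j B_r(a_j)} \delta \cdot \nabla' \phir_\ast \d x$. The Dirichlet piece converges to $-2\pi \sum_{j<k} d_jd_k \log \lv a_j-a_k \rv$ by the $\delta=0$ disk analysis of~\cite{IK22}. For the DMI piece, since $\delta$ is constant, $\delta \cdot \nabla' \phir_\ast = \nabla' \cdot (\phir_\ast \delta)$, so the divergence theorem converts it into $-2\int_{\partial B_1} \phir_0 \, \delta \cdot \nu' \d\mathcal{H}^1$; the inner-arc contributions vanish as $r\to 0$ because $\phir_\ast$ remains bounded near each $a_j$ (the harmonic extension of a boundary jump is angular, not logarithmic) while each arc length is $O(r)$. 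Parametrising $\partial B_1$ by~$\theta$ and writing $\delta \cdot \nu' = F'(\theta)$ with $F(\theta) = \delta_1\sin\theta - \delta_2\cos\theta$, a distributional integration by parts using $\partial_\theta \phir_0 = 1 - \pi\sum_j d_j \Dirac_{\theta_j}$ together with the periodicity of $F$ (which gives $\int_{\partial B_1} \delta\cdot\nu' \d\mathcal{H}^1=0$) yields $\int_{\partial B_1} \phir_0 \, \delta\cdot\nu' \d\mathcal{H}^1 = \pi\sum_j d_j F(\theta_j) = -\pi \sum_j d_j \, \delta \cdot a_j^\perp$; multiplying by~$-2$ gives the DMI contribution $2\pi\sum_j d_j \, \delta \cdot a_j^\perp$ in~\eqref{EQ_WB1delta}.

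For part~(ii), let $\Phi\colon \overline{B_1}\to\overline{\Omega}$ be a $C^1$ conformal diffeomorphism, $\Psi=\Phi^{-1}$, $b_j:=\Psi(a_j)\in\partial B_1$, $\lambda_j:=\lv\partial_z\Psi(a_j)\rv$, and define $\tilde\phir(z):=\phir_\ast(\Phi(z))$, which is harmonic on $B_1$. Since a conformal map rotates tangent directions by $\arg\partial_z\Phi$, the boundary data splits as $\tilde\phir_0 = \psi_0 + \arg\partial_z\Phi|_{\partial B_1}$, where $\psi_0$ is the tangent lifting on $\partial B_1$ associated with $\lb (b_j,d_j) \rb$; since $\log\partial_z\Phi$ admits a holomorphic branch on $B_1$ ($\partial_z\Phi$ being nowhere zero there), the harmonic extension of the last term equals $h := \arg\partial_z\Phi$, smooth on $\overline{B_1}$, whence $\tilde\phir = \psi_\ast + h$. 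For the Dirichlet piece, conformal invariance and the Taylor expansion $\Psi(B_r(a_j)) = B_{r\lambda_j}(b_j) + O(r^2)$ reduce $\int_{\Omega_r}\lv\nabla'\phir_\ast\rv^2 \d w$ up to $o_r(1)$ to $\int_{B_1\setminus\cup_j B_{r\lambda_j}(b_j)} \lv\nabla\tilde\phir\rv^2 \d z$; expanding the square and applying part~(i) at the points $b_j$ to the $\lv\nabla\psi_\ast\rv^2$ term (which produces the pair interaction $-2\pi\sum_{j<k}d_jd_k\log\lv b_j-b_k\rv$ together with a radii-mismatch correction $-\pi\sum_j\log\lambda_j$), combined with Green's identity on the cross and self terms (the inner-arc contributions being $o_r(1)$ since $h$ is smooth and $\psi_\ast$ is bounded near $b_j$) and the Cauchy-Riemann relation $\partial_\nu\arg\partial_z\Phi = -\partial_\tau\log\lv\partial_z\Phi\rv$, yields the Dirichlet part as a combination of the pair interaction, terms in $\log\lambda_j$, and boundary integrals on $\partial B_1$ weighted by $\log\lv\partial_z\Phi\rv$ or $\partial_\tau\log\lv\partial_z\Phi\rv$. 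For the DMI piece, the change of variables $w=\Phi(z)$ gives $\int_\Omega \delta\cdot\nabla'\phir_\ast \d w = \int_{B_1} \tilde\delta\cdot\nabla\tilde\phir \d z$ with $\tilde\delta(z) := J_\Phi(z)^T\delta$, which is divergence-free by the Cauchy-Riemann equations for~$\Phi$; a single integration by parts then reduces this to $\int_{\partial B_1} \tilde\phir_0 \, \tilde\delta\cdot\nu \d\mathcal{H}^1$. Finally, every $\partial B_1$-boundary integral is transported back to $\partial\Omega$ via $d\mathcal{H}^1_{\partial B_1}(\Psi(w)) = \lv\partial_z\Psi(w)\rv \, d\mathcal{H}^1_{\partial\Omega}(w)$ and $\log\lv\partial_z\Phi\circ\Psi(w)\rv = -\log\lv\partial_z\Psi(w)\rv$, and I check that the result matches~\eqref{EQ_WOmegadelta}.

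The main technical obstacle will be the precise bookkeeping required to reassemble the various boundary integrals (Dirichlet-generated weights in $\kappa$, DMI-generated weights in $\delta^\perp\cdot\nu'$, plus cross and self terms in $h$) into the single unified boundary integrand $(\kappa + 2\delta^\perp\cdot\nu')(w)\bigl[\sum_j d_j\log\lv\Psi(w)-\Psi(a_j)\rv - \log\lv\partial_z\Psi(w)\rv\bigr]$, and to extract the coefficient $\pi(d_j-1)$ of $\log\lv\partial_z\Psi(a_j)\rv$ as the balance between the radii-mismatch correction $-\pi\sum_j\log\lambda_j$ and the pointwise terms produced by distributional integration by parts of the cross integrand against the jump part $-\pi d_j\Dirac_{b_j}$ of $\partial_\tau\psi_0$. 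All cancellations ultimately follow from the harmonic/conformal structure and from divergence-freeness of $\tilde\delta$; once the accounting is tracked, the identification with~\eqref{EQ_WOmegadelta} is mechanical.
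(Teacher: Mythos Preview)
Your approach is correct but takes a genuinely different route from the paper. The paper first establishes (in Proposition~\ref{prop-expr-renenergy-psi-R}) the general identity
\[
W_\Omega^\delta(\{(a_j,d_j)\}) = W_\Omega^0(\{(a_j,d_j)\}) - 2\int_{\partial\Omega}\psi\,(\delta^\perp\cdot\nu)\,\d\mathcal{H}^1,
\]
where $\psi$ is the \emph{harmonic conjugate} of $\phir_\ast$ (the solution of the Neumann problem~\eqref{syst-psi}). Both parts then reduce to quoting the $\delta=0$ formulas from~\cite{IK22} for $W_\Omega^0$ and for $\psi$, and computing a single boundary integral. For~(i) the paper evaluates $\int_{\partial B_1}\psi\,\delta^\perp\cdot\nu$ by Green's formula on $B_1$ and the mean value property of the holomorphic function $z\mapsto (z-a_j)^{-1}$; for~(ii) the explicit $\psi(w)=-\sum_j d_j\log|\Psi(w)-\Psi(a_j)|+\log|\partial_z\Psi(w)|$ is substituted directly, yielding~\eqref{EQ_WOmegadelta} in one line.

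By contrast you work with $\phir_\ast$ itself rather than its conjugate. For~(i) your boundary integration by parts against $\partial_\tau\phir_0$ is an elegant and correct alternative to the paper's mean-value computation. For~(ii), however, you are effectively re-deriving~\cite[Theorem~6]{IK22} from scratch via the decomposition $\tilde\phir=\psi_\ast+h$ and the conformal change of variables for the DMI term (your observation that $\tilde\delta=D\Phi^T\delta$ is divergence-free is nice and correct). This is sound, but the bookkeeping you anticipate is exactly what the harmonic-conjugate formulation circumvents: once the DMI contribution is isolated as $-2\int_{\partial\Omega}\psi\,\delta^\perp\cdot\nu$, the formula~\eqref{EQ_WOmegadelta} follows by direct substitution with no cross-term expansion, no radii-mismatch correction, and no transport of multiple boundary integrals. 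Your route is more self-contained; the paper's is substantially shorter.
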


By Theorems~\ref{THM_1.2_IK21+DMI},~\ref{THM_1.4_IK21+DMI} and~\ref{THM_1.5_IK21+DMI} together with the constraint~$\sum_{j=1}^N d_j=2$, any minimiser of~$E_{\epsilon,\eta}^\delta$ nucleates two boundary vortices of multiplicity~1 in the limit~$\epsilon \rightarrow 0$ in the regime~\eqref{DEF_regime2D}. For such configurations, we prove the existence of optimal minimisers of the renormalised energy:

\begin{corollary}
\label{COR_7_IK22+DMI}
Let~$\delta \in \R^2$ and~$\Omega \subset \R^2$ be a bounded, simply connected~$C^{1,1}$ domain. There exists a pair~$(a_1^\ast,a_2^\ast) \in \dr \Omega \times \dr \Omega$ of distinct points such that
\begin{align*}
W_\Omega^\delta(\lb (a_1^\ast,1),(a_2^\ast,1) \rb)
= \min \lb W_\Omega^\delta(\lb (a_1,1),(a_2,1) \rb) : a_1 \neq a_2 \in \dr \Omega \rb.
\end{align*}
\end{corollary}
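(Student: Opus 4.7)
The plan is to apply the direct method of the calculus of variations to the function
$F(a_1, a_2) := W_\Omega^\delta(\{(a_1,1),(a_2,1)\})$ on the open set
$X := \{(a_1, a_2) \in \partial\Omega \times \partial\Omega : a_1 \neq a_2\}$ sitting inside the compact product $\partial\Omega \times \partial\Omega$. The two properties to establish are continuity of $F$ on $X$ and blow-up of $F$ as $(a_1, a_2)$ approaches the diagonal.

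Specialising \eqref{EQ_WOmegadelta} to $N = 2$ and $d_1 = d_2 = 1$, the contribution $\pi\sum_j(d_j-1)\log|\partial_z\Psi(a_j)|$ vanishes, and
\begin{align*}
F(a_1,a_2)
&= -2\pi \log |\Psi(a_1)-\Psi(a_2)| \\
&\quad + \int_{\partial\Omega}(\kappa + 2\delta^\perp\cdot\nu')(w)\Bigl(\log|\Psi(w)-\Psi(a_1)| + \log|\Psi(w)-\Psi(a_2)| - \log|\partial_z\Psi(w)|\Bigr) d\mathcal{H}^1(w).
\end{align*}
Since $\Omega$ is a simply connected $C^{1,1}$ domain, Theorem~\ref{THM_6_IK22+DMI} provides $\Psi$ as a $C^1$ diffeomorphism between the closures, and $|\partial_z\Psi|$ is then bounded above and bounded away from zero on the compact $\overline{\Omega}$; in particular $\Psi|_{\partial\Omega}$ is bi-Lipschitz, and $|\log|\Psi(w)-\Psi(a)||$ is controlled by $|\log|w-a||$ up to an additive constant, a logarithmic singularity that is uniformly integrable on $\partial\Omega$ as $a$ varies.

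The first step is continuity of $F$ on $X$: the term $-2\pi\log|\Psi(a_1)-\Psi(a_2)|$ is clearly continuous where $a_1 \neq a_2$, and the boundary integral is in fact continuous on all of $\partial\Omega \times \partial\Omega$ (including at the diagonal) by a standard logarithmic-potential argument---split the domain of integration into a small neighbourhood of the concentration point, whose contribution vanishes with the radius uniformly in the position of the singularity, and its complement, on which dominated convergence applies. The second step is coercivity near the diagonal: if $(a_1^n, a_2^n) \in X$ with $|a_1^n - a_2^n| \to 0$, the bi-Lipschitz bound gives $|\Psi(a_1^n) - \Psi(a_2^n)| \to 0$, so the leading term $-2\pi\log|\Psi(a_1^n)-\Psi(a_2^n)| \to +\infty$, while the boundary integral remains bounded by the continuity above; hence $F(a_1^n, a_2^n) \to +\infty$.

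The direct method then concludes: picking any distinct $b_1, b_2 \in \partial\Omega$ gives $\inf_X F \leqslant F(b_1, b_2) < +\infty$; given a minimising sequence $(a_1^n, a_2^n) \in X$, compactness of $\partial\Omega \times \partial\Omega$ yields a subsequential limit $(a_1^*, a_2^*)$, the coercivity forces $a_1^* \neq a_2^*$, and continuity yields $F(a_1^*, a_2^*) = \inf_X F$. The only non-routine technical step is the uniform integrable control of the logarithmic kernel needed to propagate continuity of the boundary integral up to the diagonal, relying on the bi-Lipschitz character of $\Psi|_{\partial\Omega}$; everything else is standard.
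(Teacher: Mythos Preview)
Your proof is correct and follows essentially the same approach as the paper: both use the explicit formula \eqref{EQ_WOmegadelta} from Theorem~\ref{THM_6_IK22+DMI} to split $W_\Omega^\delta$ into the blowing-up term $-2\pi\log|\Psi(a_1)-\Psi(a_2)|$ and a bounded (continuous) boundary integral, then apply the direct method with compactness of $\partial\Omega\times\partial\Omega$ and coercivity at the diagonal. The only cosmetic difference is that the paper changes variables to $\partial B_1$ to justify integrability of the logarithmic kernel, whereas you argue directly via the bi-Lipschitz property of $\Psi|_{\partial\Omega}$; both routes achieve the same uniform $L^1$ control.
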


We will determine now the location of the optimal pair~$(a_1^\ast,a_2^\ast)$ in the unit disk~$\Omega=B_1$. Recall that for the model with~$\delta=0$ studied by Ignat and Kurzke~\cite{IK21},~\cite{IK22}, the pair of boundary vortices that minimises the renormalised energy corresponds to two diametrically opposed points on~$\dr B_1$ and is unique (up to a rotation). For our model with~$\delta \in \R^2$, the location of the optimal pair~$(a_1^\ast,a_2^\ast)$ is influenced by the Dzyaloshinskii-Moriya interaction. Indeed, the renormalised energy for two vortices of multiplicity~1 at~$a_1,a_2 \in \dr B_1$ is
\begin{equation*}
W_{B_1}^\delta(\lb(a_1,1),(a_2,1)\rb)
= -2\pi \log \lv a_1-a_2 \rv +2\pi \delta \cdot (a_1^\perp+a_2^\perp).
\end{equation*}
For~$\delta \neq 0$, the minimisation of the renormalised energy is different due to the competition between the two terms in~$W_{B_1}^\delta$. In the next theorem, we show that the minimal configuration for the points~$a_1,a_2 \in \dr B_1$ is unique (up to switching~$a_1$ and~$a_2$) and the vortices are not diametrically opposed if~$\delta \neq 0$, but symmetric with respect to~$\delta^\perp$: the distance between~$a_1$ and~$a_2$ becomes smaller as~$\vert\delta\vert$ gets larger.

\begin{theorem}
\label{THM_minimisers_unit_disk}
Let~$B_1$ be the unit disk in~$\R^2$ and~$\delta = \lv \delta \rv e^{i\theta} \in \R^2 \setminus \lb (0,0) \rb$ for some~$\theta \in \R$. \linebreak Then the pair of distinct points~$(a_1^\ast,a_2^\ast) \in \dr B_1 \times \dr B_1$ that minimises the renormalised \linebreak energy~$W_{B_1}^\delta(\lb(\cdot,1),(\cdot,1)\rb)$ in Corollary~\ref{COR_7_IK22+DMI} is unique (up to switching~$a_1^\ast$ and~$a_2^\ast$) and given by
\begin{align*}
a_1^\ast=e^{i(\theta+\theta_\delta)} \ \ \text{ and } \ \ a_2^\ast=e^{i(\theta+\pi-\theta_\delta)}
\end{align*}
where~$\theta_\delta=\arcsin \left( \sqrt{1+\frac{1}{16\lv \delta \rv^2}} - \frac{1}{4\lv \delta \rv} \right)$ if~$\delta \neq 0$. In particular,~$a_1^\ast$ and~$a_2^\ast$ are symmetric with respect to~$\delta^\perp$.
\end{theorem}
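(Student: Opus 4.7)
The plan is to parametrize $a_j = e^{i\alpha_j}$ with $\alpha_j \in \R/2\pi\Z$ and to reduce the minimisation to an explicit two-variable problem. Writing $\delta=\lv\delta\rv e^{i\theta}$, the identities $a_j^\perp = i e^{i\alpha_j}$ and $\lv a_1-a_2 \rv = 2\lv \sin\tfrac{\alpha_1-\alpha_2}{2}\rv$ together with $\delta \cdot a_j^\perp = -\lv \delta \rv \sin(\alpha_j-\theta)$ turn the renormalised energy \eqref{EQ_WB1delta} into
\begin{align*}
F(\alpha_1,\alpha_2) := -2\pi \log\bigl(2\lv\sin\tfrac{\alpha_1-\alpha_2}{2}\rv\bigr) - 2\pi\lv \delta \rv \bigl(\sin(\alpha_1-\theta)+\sin(\alpha_2-\theta)\bigr).
\end{align*}
After the translation $\alpha_j \mapsto \alpha_j+\theta$, I may assume $\theta=0$. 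By Corollary~\ref{COR_7_IK22+DMI} a minimiser exists; since $F \to +\infty$ as $\alpha_1-\alpha_2 \to 0$ mod $2\pi$, it is attained at an interior critical point.

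I would then analyse the critical-point system $\partial_{\alpha_j}F=0$, which reads
\begin{align*}
\cot\tfrac{\alpha_1-\alpha_2}{2}+2\lv\delta\rv \cos\alpha_1 = 0 \quad \text{and} \quad -\cot\tfrac{\alpha_1-\alpha_2}{2}+2\lv\delta\rv \cos\alpha_2 = 0.
\end{align*}
Adding these gives $\cos\alpha_1+\cos\alpha_2=0$ (since $\lv \delta \rv>0$), hence either (A)~$\alpha_2=\pi-\alpha_1$ (the pair symmetric with respect to the vertical axis) or (B)~$\alpha_2=\alpha_1-\pi$ (the antipodal pair). In case~(A), substitution produces the quadratic $2\lv\delta\rv \sin^2\alpha_1 + \sin\alpha_1 - 2\lv\delta\rv=0$, whose only admissible root is $\sin\alpha_1 = (-1+\sqrt{1+16\lv\delta\rv^2})/(4\lv\delta\rv) = \sin\theta_\delta$; the other root has absolute value strictly larger than $1$ for every $\lv\delta\rv>0$ and is therefore discarded. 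This yields the single unordered pair $\{e^{i\theta_\delta}, e^{i(\pi-\theta_\delta)}\}$. In case~(B), substitution forces $\cos\alpha_1=0$, producing the antipodal pair $\{i,-i\}$.

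To conclude uniqueness I compare the two critical values. A direct computation gives $F_A = -2\pi\log(2\cos\theta_\delta) - 4\pi\lv\delta\rv \sin\theta_\delta$ for case~(A) and $F_B = -2\pi\log 2$ for case~(B). The critical-point identity $2\lv\delta\rv\cos^2\theta_\delta=\sin\theta_\delta$ yields $2\lv\delta\rv\sin\theta_\delta = \tan^2\theta_\delta$, so $F_A<F_B$ is equivalent to $-\log\cos\theta_\delta < \tan^2\theta_\delta$. Setting $x := \cos\theta_\delta \in (0,1)$, this amounts to $g(x):=1/x^2-1+\log x > 0$ on $(0,1)$, which follows from $g(1)=0$ together with $g'(x)=(x^2-2)/x^3<0$ on $(0,1)$. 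Hence case~(A) gives the unique minimiser (up to swap). Restoring $\theta$, one recovers exactly $a_1^\ast=e^{i(\theta+\theta_\delta)}$ and $a_2^\ast=e^{i(\theta+\pi-\theta_\delta)}$; their symmetry with respect to $\R\delta^\perp$ with $\delta^\perp = \lv\delta\rv e^{i(\theta+\pi/2)}$ is a direct check using the reflection $z \mapsto e^{2i(\theta+\pi/2)}\overline{z}$, which swaps them.

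The main obstacle is the bookkeeping in the case analysis: showing that the two solutions $\alpha_1 \in \{\theta_\delta,\pi-\theta_\delta\}$ in case~(A) describe the same unordered pair (so that uniqueness up to swap indeed holds); discarding the spurious quadratic root as geometrically inadmissible for every $\lv\delta\rv>0$; and establishing the strict inequality $F_A<F_B$ uniformly in $\lv\delta\rv>0$ so that the antipodal critical configuration from case~(B) is excluded as a minimum.
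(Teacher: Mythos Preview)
Your proof is correct and follows essentially the same critical-point analysis as the paper: both parametrize $a_j=e^{i\alpha_j}$, write the energy as a function of the angles, reduce to $\theta=0$, and split the critical-point equations into the two cases ``antipodal'' versus ``symmetric about $\delta^\perp$'', obtaining the same quadratic $2\lv\delta\rv X^2+X-2\lv\delta\rv=0$ in the latter.

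The genuine difference lies in how the antipodal configuration is excluded. The paper computes the Hessian of the energy at both critical configurations: at the antipodal pair the determinant equals $-\lv\delta\rv^2<0$, so it is a saddle and hence not a minimiser; at the symmetric pair the Hessian is shown to be positive definite, so it is a (strict) local minimum. You instead compare the two critical values directly, reducing $F_A<F_B$ to the elementary inequality $g(x)=1/x^2-1+\log x>0$ on $(0,1)$. Both arguments are valid and close in length; yours is slightly more self-contained (no second derivatives), while the paper's Hessian computation yields the extra information that the antipodal configuration is an \emph{unstable} critical point, which is recorded there as a remark. Your handling of the ``bookkeeping'' points you flag (the two solutions $\alpha_1\in\{\theta_\delta,\pi-\theta_\delta\}$ giving the same unordered pair, the inadmissible quadratic root via the product-of-roots $=-1$, and the uniform strictness of $F_A<F_B$) is correct.
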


\begin{figure}[!h]
\definecolor{qqwuqq}{rgb}{0,0.39215686274509803,0}
\definecolor{qqzzqq}{rgb}{0,0.6,0}
\definecolor{qqttcc}{rgb}{0,0.2,0.8}
\definecolor{ffqqqq}{rgb}{1,0,0}
\begin{center}
\begin{minipage}{7cm}
\begin{center}
\begin{tikzpicture}[line cap=round,line join=round,x=2.5cm,y=2.5cm]

\draw[->,color=black,line width=.3mm] (-1.2,0) -- (1.2,0);
\draw[->,color=black,line width=.3mm] (0,-1.2) -- (0,1.2);
\clip(-1.2,-1.2) rectangle (1.2,1.2);

\draw [line width=2pt,color=qqttcc] (0,0) circle (2.5cm);

\draw [line width=1.5pt,color=qqzzqq,domain=-2.062212512572243:1.8872658142793661] plot(\x,{(-0--0.7653668647301796*\x)/1.8477590650225735});
\draw [line width=1.5pt,color=qqzzqq,domain=-2.062212512572243:1.8872658142793661] plot(\x,{(-0-0.9238795325112867*\x)/0.3826834323650898});

\draw [->,line width=2.5pt,color=qqwuqq] (0,0) -- (0.9238795325112867,0.3826834323650898);
\draw [->,line width=2.5pt,color=qqwuqq] (0,0) -- (-0.3826834323650898,0.9238795325112867);

\draw [fill=ffqqqq] (0.8328871559753063,0.5534428474660827) circle (3pt);
\draw [fill=ffqqqq] (-0.9802833463957761,-0.19759696551085734) circle (3pt);

\draw [color=qqwuqq](0.6,0.25) node[above] {$\frac{\delta}{\vert\delta\vert}$};
\draw [color=qqwuqq](-0.35,0.4) node[above] {$\frac{\delta^\perp}{\vert\delta\vert}$};
\draw [color=ffqqqq](0.86,0.5534428474660827) node[right] {$a_1^\ast$};
\draw [color=ffqqqq](-1,-0.19759696551085734) node[left] {$a_2^\ast$};
\end{tikzpicture}
\end{center}
\end{minipage}
\begin{minipage}{7cm}
\begin{center}
\begin{tikzpicture}[line cap=round,line join=round,x=2.5cm,y=2.5cm]

\draw[->,color=black,line width=.3mm] (-1.2,0) -- (1.2,0);
\draw[->,color=black,line width=.3mm] (0,-1.2) -- (0,1.2);
\clip(-1.2,-1.2) rectangle (1.2,1.2);

\draw [line width=2pt,color=qqttcc] (0,0) circle (2.5cm);

\draw [line width=1.5pt,color=qqzzqq,domain=-2.062212512572243:1.8872658142793661] plot(\x,{(-0--0.7653668647301796*\x)/1.8477590650225735});
\draw [line width=1.5pt,color=qqzzqq,domain=-2.062212512572243:1.8872658142793661] plot(\x,{(-0-0.9238795325112867*\x)/0.3826834323650898});

\draw [->,line width=2.5pt,color=qqwuqq] (0,0) -- (0.9238795325112867,0.3826834323650898);
\draw [->,line width=2.5pt,color=qqwuqq] (0,0) -- (-0.3826834323650898,0.9238795325112867);

\draw [fill=ffqqqq] (-0.1692161564254841,0.9855789630489209) circle (3pt);
\draw [fill=ffqqqq] (-0.577255676471914,0.8165634598614813) circle (3pt);

\draw [color=qqwuqq](0.6,0.25) node[above] {$\frac{\delta}{\vert\delta\vert}$};
\draw [color=qqwuqq](-0.35,0.4) node[above] {$\frac{\delta^\perp}{\vert\delta\vert}$};
\draw [color=ffqqqq](-0.1692161564254841,1) node[above] {$a_1^\ast$};
\draw [color=ffqqqq](-0.577255676471914,0.8165634598614813) node[above left] {$a_2^\ast$};
\end{tikzpicture}
\end{center}
\end{minipage}
\end{center}
\caption{Minimising pair~$(a_1^\ast,a_2^\ast)$ for~$\delta=\frac{1}{10} e^{i\pi/8}$ (left) and~$\delta=10e^{i\pi/8}$ (right). The distance between the boundary vortices~$a_1^\ast$ and~$a_2^\ast$ becomes smaller as~$\vert\delta\vert$ gets larger.}
\label{figure1}
\end{figure}
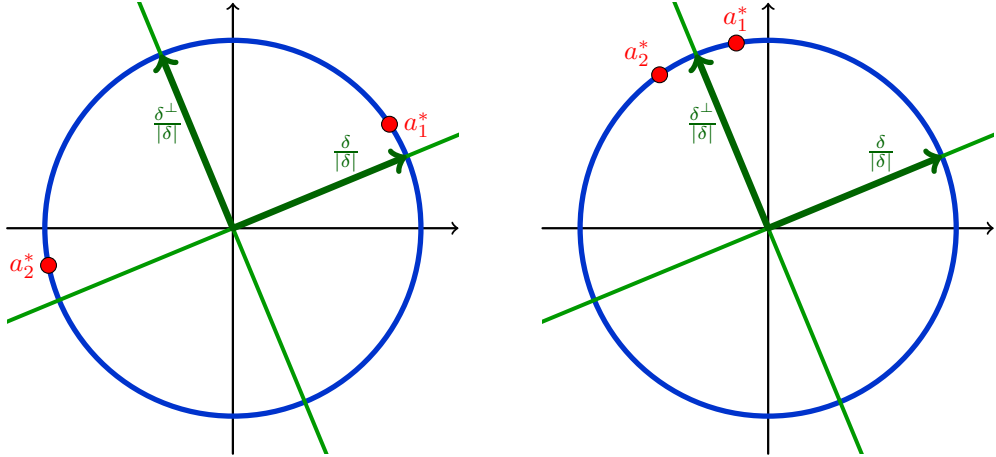

The following result proves the asymptotic behaviour of minimisers\footnote{The existence of minimisers of~$E_{\epsilon,\eta}^\delta$ is proved in Lemma~\ref{lem-minimizer-EepsetaD}.} of~$E_{\epsilon,\eta}^\delta$ as~$\epsilon \rightarrow 0$.

\begin{corollary}
\label{COR_1.6_IK21+DMI}
Let~$\delta \in \R^2$,~$\Omega \subset \R^2$ be a bounded, simply connected~$C^{1,1}$ domain and~$\kappa$ be the curvature of~$\dr \Omega$. Assume~$\epsilon \rightarrow 0$ and~$\eta=\eta(\epsilon) \rightarrow 0$ in the regime~\eqref{DEF_regime2D}.
For every family~$(v_\epsilon)_\epsilon$ of minimisers of~$E_{\epsilon,\eta}^\delta$ on~$H^1(\Omega,\R^2)$, there exists a subsequence~$\epsilon \rightarrow 0$ such that~$(\mathcal{J}(v_\epsilon))_\epsilon$ converges as in~\eqref{EQ_convergence_jacobian_2D} to the measure
\begin{align*}
J=-\kappa \mathcal{H}^1 \llcorner \dr \Omega + \pi (\Dirac_{a_1} + \Dirac_{a_2}),
\end{align*}
where~$a_1$ and~$a_2$ are two distinct points in~$\dr \Omega$ that minimise the renormalised energy (for the multiplicities~$d_1=d_2=1$), i.e.
\begin{align*}
W_\Omega^\delta(\lb (a_1,1),(a_2,1) \rb)
= \min \lb W_\Omega^\delta(\lb (\tilde{a}_1,1),(\tilde{a}_2,1) \rb) : \tilde{a}_1 \neq \tilde{a}_2 \in \dr \Omega \rb.
\end{align*}
Moreover, for a subsequence~$\epsilon \rightarrow 0$,~$(v_\epsilon)_\epsilon$ converges weakly to~$e^{i\phir_\ast}$ in~$W^{1,q}(\Omega,\R^2)$, for \linebreak every~$q \in [1,2)$, where~$\phir_\ast$ is the harmonic extension to~$\Omega$ of a boundary lifting~$\phir_0 \in BV(\dr\Omega,\pi\Z)$ that satisfies~$\partial_\tau\phir_0=-{J}$ on~$\partial\Omega$ and~$e^{i\phir_0} \cdot \nu' = 0$ in~$\dr \Omega \setminus \lb a_1,a_2 \rb$.
\\
Furthermore, there holds the following second order expansion of the minimal energy:
\begin{align*}
\min E_{\epsilon,\eta}^\delta = 2\pi \lv \log \epsilon \rv + W_\Omega^\delta(\lb (a_1,1),(a_2,1) \rb) + 2\gamma_0 + o_\epsilon(1)
\ \ \text{ as } \epsilon \rightarrow 0,
\end{align*}
where~$\gamma_0=\pi\log\frac{e}{4\pi}$.
\end{corollary}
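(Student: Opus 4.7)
The strategy is a standard matching argument combining Theorems~\ref{THM_1.2_IK21+DMI}, \ref{THM_1.4_IK21+DMI} and~\ref{THM_1.5_IK21+DMI} with the existence of optimal boundary configurations from Corollary~\ref{COR_7_IK22+DMI}. First I would fix an optimal pair $(a_1^\ast,a_2^\ast) \in \partial\Omega \times \partial\Omega$ minimising $W_\Omega^\delta(\lb(\cdot,1),(\cdot,1)\rb)$ over distinct pairs, and apply the upper bound construction in Theorem~\ref{THM_1.5_IK21+DMI} (with $d_1^\ast=d_2^\ast=1$) to obtain a competitor $w_\epsilon \in H^1(\Omega,\mathbb{S}^1)$ with
\begin{align*}
E_{\epsilon,\eta}^\delta(w_\epsilon) = 2\pi\lv\log\epsilon\rv + W_\Omega^\delta(\lb(a_1^\ast,1),(a_2^\ast,1)\rb) + 2\gamma_0 + o_\epsilon(1).
\end{align*}
Minimality of $v_\epsilon$ then gives $E_{\epsilon,\eta}^\delta(v_\epsilon) \leq E_{\epsilon,\eta}^\delta(w_\epsilon)$, from which both \eqref{EQ_limsup_order1_2D} and, a posteriori, the refined second-order bound \eqref{EQ_limsup_order2_2D} will follow.

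Next I would apply Theorem~\ref{THM_1.2_IK21+DMI} to extract a subsequence along which $\mathcal{J}(v_\epsilon)$ converges in the sense of~\eqref{EQ_convergence_jacobian_2D} to $J = -\kappa \mathcal{H}^1 \llcorner \partial\Omega + \pi \sum_{j=1}^N d_j \Dirac_{a_j}$ with distinct $a_j \in \partial\Omega$, $d_j \in \Z \setminus \lb 0 \rb$ and $\sum_j d_j = 2$, together with the first-order lower bound $\liminf_\epsilon E_{\epsilon,\eta}^\delta(v_\epsilon)/\lv\log\epsilon\rv \geq \pi \sum_j \lv d_j \rv$. Comparing with the competitor bound $2\pi\lv\log\epsilon\rv + O(1)$ forces $\sum_j \lv d_j \rv \leq 2$. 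Combined with $\sum_j d_j = 2$ and $d_j \neq 0$, the only compatible configurations are $N=1$ with $d_1=2$ or $N=2$ with $d_1=d_2=1$; in both cases $\sum_j \lv d_j \rv = 2$, so the minimality upper bound yields $E_{\epsilon,\eta}^\delta(v_\epsilon) - \pi\lv\log\epsilon\rv \sum_j\lv d_j\rv = O(1)$, i.e.\ the hypothesis \eqref{EQ_limsup_order2_2D} of Theorem~\ref{THM_1.4_IK21+DMI} holds.

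Invoking Theorem~\ref{THM_1.4_IK21+DMI}\textit{(i)} then forces $d_j \in \lb \pm 1 \rb$, ruling out the degenerate concentration $N=1$, $d_1=2$, so that necessarily $N=2$ and $d_1=d_2=1$; moreover one obtains the second-order lower bound
\begin{align*}
\liminf_{\epsilon \to 0} \left( E_{\epsilon,\eta}^\delta(v_\epsilon) - 2\pi\lv\log\epsilon\rv \right) \geq W_\Omega^\delta(\lb(a_1,1),(a_2,1)\rb) + 2\gamma_0.
\end{align*}
Comparing with the competitor estimate yields $W_\Omega^\delta(\lb(a_1,1),(a_2,1)\rb) \leq W_\Omega^\delta(\lb(a_1^\ast,1),(a_2^\ast,1)\rb)$, so by optimality of $(a_1^\ast,a_2^\ast)$ the pair $(a_1,a_2)$ is itself a minimiser of the renormalised energy. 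The two matching inequalities then collapse to the asserted expansion $\min E_{\epsilon,\eta}^\delta = 2\pi\lv\log\epsilon\rv + W_\Omega^\delta(\lb(a_1,1),(a_2,1)\rb) + 2\gamma_0 + o_\epsilon(1)$.

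The weak $W^{1,q}$ compactness of $(v_\epsilon)_\epsilon$ and the identification of the limit $e^{i\phir_\ast}$ with $\phir_\ast$ the harmonic extension of the $BV$ lifting $\phir_0$ (characterised by $\partial_\tau \phir_0 = -J$ and $e^{i\phir_0} \cdot \nu' = 0$ off $\lb a_1,a_2 \rb$) are then read off directly from Theorem~\ref{THM_1.4_IK21+DMI}\textit{(ii)} together with Theorem~\ref{THM_1.2_IK21+DMI}\textit{(i)}. The only delicate step is ruling out the single-vortex concentration scenario $N=1$, $d_1=2$: this is precisely where the second-order upper bound from Theorem~\ref{THM_1.5_IK21+DMI} is essential, since it is what activates \eqref{EQ_limsup_order2_2D} rather than merely the coarser first-order bound, allowing the multiplicity restriction $d_j \in \lb \pm 1 \rb$ from Theorem~\ref{THM_1.4_IK21+DMI} to kick in.
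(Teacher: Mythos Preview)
Your matching argument for the first part---using the competitor from Theorem~\ref{THM_1.5_IK21+DMI} to obtain the upper bound, then Theorems~\ref{THM_1.2_IK21+DMI} and~\ref{THM_1.4_IK21+DMI} to force $N=2$, $d_1=d_2=1$, the minimality of $(a_1,a_2)$ for $W_\Omega^\delta$, and the second-order expansion---is correct and is exactly what the paper does.

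There is, however, a genuine gap in your last paragraph. You claim that the identification of the weak $W^{1,q}$ limit as $e^{i\phir_\ast}$, with $\phir_\ast$ the \emph{harmonic} extension of $\phir_0$, can be ``read off directly from Theorem~\ref{THM_1.4_IK21+DMI}\textit{(ii)}''. But that theorem only yields convergence to $e^{i\widehat{\phir}_0}$ where $\widehat{\phir}_0$ is explicitly stated to be ``an extension (\emph{not necessarily harmonic})'' of $\phir_0$. Harmonicity is not a compactness statement; it is a rigidity property that holds only because the $v_\epsilon$ are \emph{minimisers}, and it requires a separate argument.

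The paper spends roughly the second half of its proof on precisely this point. The idea is that the energy saturation you have already established (upper and lower second-order bounds match) propagates to the Dirichlet-plus-DMI integral of the limit: one shows
\[
W_\Omega^\delta(\{(a_1,1),(a_2,1)\}) \;\geq\; \liminf_{r\to 0}\Bigl(\int_{\Omega\setminus(B_r(a_1)\cup B_r(a_2))}\bigl(|\nabla\widehat{\phir}_0|^2-2\delta\cdot\nabla\widehat{\phir}_0\bigr)\,\d x - 2\pi\log\tfrac{1}{r}\Bigr),
\]
by combining weak lower semicontinuity away from the vortices with the local core lower bound~\eqref{lower_bound_near_vortices} and the DMI bound~\eqref{limsup_DMI_term}. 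Since the DMI contribution depends only on boundary values (hence is the same for $\widehat{\phir}_0$ and $\phir_\ast$), this forces $\liminf_{r\to 0}\int_{\Omega\setminus\bigcup B_r(a_j)}(|\nabla\widehat{\phir}_0|^2-|\nabla\phir_\ast|^2)\,\d x\leq 0$. One then expands $|\nabla\widehat{\phir}_0|^2-|\nabla\phir_\ast|^2=|\nabla(\widehat{\phir}_0-\phir_\ast)|^2+2\nabla\phir_\ast\cdot\nabla(\widehat{\phir}_0-\phir_\ast)$ and uses harmonicity of $\phir_\ast$ together with an integration by parts (handling the singular behaviour of $\phir_\ast$ near $a_1,a_2$) to kill the cross term, concluding $\widehat{\phir}_0=\phir_\ast$. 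This step is not automatic and you need to supply it.
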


In the case of the unit disk~$\Omega=B_1$, combining Theorem~\ref{THM_6_IK22+DMI} and Corollary~\ref{COR_1.6_IK21+DMI}, we deduce in the regime~\eqref{DEF_regime2D} for a minimiser~$v_\epsilon$ of~$E_{\epsilon,\eta}^\delta$ that
\begin{equation*}
\lim\limits_{\epsilon \rightarrow 0} \left( E_{\epsilon,\eta}^\delta(v_\epsilon) -2\pi\lv \log \epsilon \rv \right)
= -2\pi \log \lv a_1^\ast-a_2^\ast \rv
+2\pi\delta \cdot \left( (a_1^\ast)^\perp+(a_2^\ast)^\perp \right)
+2\pi\log\frac{e}{4\pi}
\end{equation*}
where~$(a_1^\ast,a_2^\ast)$ is given in Theorem~\ref{THM_minimisers_unit_disk}.

\subsection{Gamma-convergence of the three-dimensional energy~$E_h$}
\label{INTROSECTION_GC_3D_energy}

We come back to the nonlocal three-dimensional model for maps~$m_h \colon \Omega_h \subset \R^3 \rightarrow \mathbb{S}^2$ and study the three-dimensional energy~$E_h$ given at~\eqref{DEF_Eh}.
Thanks to the dimension reduction in Theorem~\ref{THM_reduction3D2D} and the results obtained for the two-dimensional reduced model studied in Section~\ref{INTROSECTION_GC_2D_energy}, we prove the~$\Gamma$-expansion at the second order of~$E_h$ as~$h \rightarrow 0$ summarized in the following three theorems:

\begin{theorem}
\label{THM_9_order1_IK22+DMI}
Let~$\Omega_h=\Omega \times (0,h)$ with~$\Omega \subset \R^2$ a bounded, simply connected~$C^{1,1}$ domain, and~$\kappa$ be the curvature of~$\partial \Omega$. In the regime~\eqref{DEF_regime3D}, consider a family of magnetizations~$\lb m_h \colon \Omega_h \rightarrow \mathbb{S}^2 \rb_{h \downarrow 0}$ that satisfies
\begin{align}
\label{EQ_limsup_order1_3D}
\limsup\limits_{h \rightarrow 0} E_h(m_h) < +\infty
\end{align}
and let~$\overline{m}_h=(\overline{m}'_h,\overline{m}_{h,3}) \colon \Omega \rightarrow \R^3$ be the average of~$m_h$ in~\eqref{DEF_mh_mean}.
\begin{itemize}
\item[\textit{(i)}] \textbf{Compactness of the global Jacobians and~$L^p(\dr\Omega)$-compactness of the traces~$\overline{m}_h \vert_{\dr\Omega}$.}
\\ For a subsequence~$h \rightarrow 0$,~$\left( \mathcal{J}(\overline{m}'_h) \right)_h$ converges to a measure~$J$ on the closure~$\overline{\Omega}$, in the sense that
\begin{equation}
\label{EQ_convergence_jacobian_3D}
\lim\limits_{h \rightarrow 0}
\left(
\sup\limits_{\lv \nabla' \zeta \rv \leqslant 1 \text{ in } \Omega} \lv \langle \mathcal{J}(\overline{m}'_h)-J,\zeta \rangle \rv
\right)
= 0.
\end{equation}
Moreover,~$J$ is supported on~$\dr\Omega$ and has the form~\eqref{EQ_limit_jacobian_2D} for~$N \geqslant 1$ distinct boundary vortices~$a_j \in \dr\Omega$ carrying the multiplicities~$d_j \in \Z \setminus \lb 0 \rb$, for~$j \in \lb 1,...,N \rb$, with~$\sum_{j=1}^N d_j =2$.
\\
Furthermore, for a subsequence,~$\left( \overline{m}_h \vert_{\dr\Omega} \right)_h$ converges to~$(e^{i\phir_0},0) \in BV(\dr\Omega,\mathbb{S}^1 \times \lb 0 \rb)$ \linebreak in~$L^p(\dr\Omega,\R^3)$, for every~$p \in [1,+\infty)$, where~$\phir_0 \in BV(\dr\Omega,\pi\Z)$ is a lifting of the tangent field~$\pm\tau$ on~$\dr\Omega$ determined (up to a constant in~$\pi\Z$) by~$\partial_\tau\phir_0=-\mathcal{J}$ as measure on~$\partial\Omega$.

\item[\textit{(ii)}] \textbf{Energy lower bound at the first order.}
\\ If~$\left( \mathcal{J}(\overline{m}'_h) \right)_h$ satisfies the convergence assumption in~\textit{(i)} as~$h \rightarrow 0$, then the energy lower bound at the first order is the total mass of the measure~$J+\kappa \mathcal{H}^1 \llcorner \dr \Omega$ on~$\dr \Omega$:
\begin{align*}
\liminf\limits_{h \rightarrow 0} E_h(m_h) \geqslant \pi \sum_{j=1}^N \lv d_j \rv = \lv J+\kappa \mathcal{H}^1 \llcorner \dr \Omega \rv(\dr\Omega).
\end{align*}
\end{itemize}
\end{theorem}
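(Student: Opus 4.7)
The overall plan is to reduce to the two-dimensional setting via the dimension reduction result (Theorem~\ref{THM_reduction3D2D}) and then invoke the 2D compactness and lower bound (Theorem~\ref{THM_1.2_IK21+DMI}) applied to the in-plane average~$\overline{m}'_h$.

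First, I would transfer the hypotheses. Since~$\lv m_h \rv =1$ a.e., Jensen's inequality yields~$\lv \overline{m}_h \rv \leqslant 1$ in~$\Omega$, and in particular~$\overline{m}'_h \in H^1(\Omega,\R^2)$. The footnote~\ref{foot} in Section~\ref{INTROSECTION_GC_2D_energy} shows that the regime~\eqref{DEF_regime3D} implies the 2D regime~\eqref{DEF_regime2D}. The dimension reduction lower bound from Theorem~\ref{THM_reduction3D2D} rewrites as
\begin{equation*}
\frac{1}{\lv \log\epsilon \rv} E_{\epsilon,\eta}^\delta(\overline{m}'_h) \leqslant E_h(m_h) + o_h(1),
\end{equation*}
so the energy assumption~\eqref{EQ_limsup_order1_3D} guarantees that the family~$v_\epsilon := \overline{m}'_h$ satisfies the 2D uniform bound~\eqref{EQ_limsup_order1_2D}.

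For part~\textit{(i)}, Theorem~\ref{THM_1.2_IK21+DMI}\textit{(i)} applied to~$v_\epsilon = \overline{m}'_h$ directly yields, up to a subsequence, the convergence~\eqref{EQ_convergence_jacobian_3D} of~$(\mathcal{J}(\overline{m}'_h))_h$ towards a measure~$J$ of the form~\eqref{EQ_limit_jacobian_2D}, together with the~$L^p(\partial \Omega,\R^2)$-convergence of the traces~$\overline{m}'_h \vert_{\partial \Omega} \to e^{i\phir_0}$, where~$\phir_0 \in BV(\partial\Omega,\pi\Z)$ is the lifting of~$\pm\tau$ characterized by~$\partial_\tau \phir_0 = -J$. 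To upgrade this to~$\overline{m}_h \vert_{\partial \Omega} \to (e^{i\phir_0},0)$ in~$L^p(\partial \Omega,\R^3)$, it suffices to control the third component. The pointwise inequality
\begin{equation*}
\lv \overline{m}_{h,3} \rv^2 \leqslant 1 - \lv \overline{m}'_h \rv^2 \quad \text{in } \Omega,
\end{equation*}
(consequence of~$\lv \overline{m}_h \rv \leqslant 1$) passed to the trace, combined with the fact that~$\lv \overline{m}'_h\vert_{\partial\Omega} \rv^2 \to \lv e^{i\phir_0} \rv^2 = 1$ in every~$L^p(\partial\Omega)$ (which follows from the~$L^p$-convergence of~$\overline{m}'_h\vert_{\partial\Omega}$ and the uniform~$L^\infty$-bound), forces~$\overline{m}_{h,3}\vert_{\partial\Omega} \to 0$ in every~$L^p(\partial\Omega)$ for~$p \in [1,+\infty)$.

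For part~\textit{(ii)}, once the convergence assumption on~$(\mathcal{J}(\overline{m}'_h))_h$ holds, one simply chains the two lower bounds:
\begin{equation*}
\liminf_{h \to 0} E_h(m_h)
\geqslant \liminf_{h \to 0} \frac{1}{\lv \log\epsilon \rv} E_{\epsilon,\eta}^\delta(\overline{m}'_h)
\geqslant \pi \sum_{j=1}^N \lv d_j \rv
= \lv J+\kappa \mathcal{H}^1 \llcorner \partial \Omega \rv(\partial \Omega),
\end{equation*}
where the first inequality uses Theorem~\ref{THM_reduction3D2D} (together with the~$o_h(1)$ error being negligible) and the second uses Theorem~\ref{THM_1.2_IK21+DMI}\textit{(ii)}. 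Since the substantial analytical work has been packaged into Theorems~\ref{THM_reduction3D2D} and~\ref{THM_1.2_IK21+DMI}, the proof is essentially bookkeeping; the only technical point requiring care is the vanishing of~$\overline{m}_{h,3}$ on the boundary, which is handled routinely via the unit-sphere constraint as above.
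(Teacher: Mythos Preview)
Your proposal is correct and follows essentially the same approach as the paper: reduce to the 2D setting via Theorem~\ref{THM_reduction3D2D}, apply Theorem~\ref{THM_1.2_IK21+DMI} to $v_\epsilon=\overline{m}'_h$, and handle $\overline{m}_{h,3}\vert_{\partial\Omega}\to 0$ via the constraint $\overline{m}_{h,3}^2\leqslant 1-\lv\overline{m}'_h\rv^2$. The paper's argument is identical in structure, only phrasing the last step as $L^2$-convergence followed by dominated convergence.
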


As in Theorem~\ref{THM_1.4_IK21+DMI}, within a more precise bound similar to~\eqref{EQ_limsup_order2_2D}, we prove the following lower bound at the second order for~$E_h$ in the narrower regime~\eqref{DEF_regime3D_log}.

\begin{theorem}
\label{THM_9_order2_IK22+DMI}
Let~$\Omega_h=\Omega \times (0,h)$ with~$\Omega \subset \R^2$ a bounded, simply connected~$C^{1,1}$ domain, and~$\kappa$ be the curvature of~$\partial \Omega$. In the regime~\eqref{DEF_regime3D_log}, consider a family of magnetizations~$\lb m_h \colon \Omega_h \rightarrow \mathbb{S}^2 \rb_{h \downarrow 0}$ that satisfies~\eqref{EQ_limsup_order1_3D} and the convergence at~\textit{(i)} in Theorem~\ref{THM_9_order1_IK22+DMI} to the measure~$J$ given at~\eqref{EQ_limit_jacobian_2D} as~$h \rightarrow 0$. In addition, we assume the following more precise bound than~\eqref{EQ_limsup_order1_3D}:
\begin{align}
\label{EQ_limsup_order2_3D}
\limsup\limits_{h \rightarrow 0} \lv \log \epsilon \rv \left( E_h(m_h) - \pi \sum_{j=1}^N \lv d_j \rv \right) < +\infty.
\end{align}
\begin{itemize}
\item[\textit{(i)}] \textbf{Single multiplicity and second order lower bound.}
\\ The multiplicities satisfy\footnote{Recall that~$\sum_{j=1}^N d_j=2$.}~$d_j \in \lb \pm 1 \rb$ for every~$j \in \lb 1,...,N \rb$, so we have~$\sum_{j=1}^N \lv d_j \rv = N$, and there holds the following second order energy lower bound:
\begin{align*}
\liminf\limits_{h \rightarrow 0} \lv \log \epsilon \rv \left( E_h(m_h)-N\pi \right) \geqslant W_\Omega^\delta(\lb (a_j,d_j) \rb)+ N\gamma_0
\end{align*}
where~$\gamma_0=\pi\log\frac{e}{4\pi}$ and~$W_\Omega^\delta(\lb(a_j,d_j)\rb)$ is the renormalised energy defined in~\eqref{DEF_renormalised_energy}.

\item[\textit{(ii)}] \textbf{$L^p(\Omega)$-compactness of the rescaled magnetizations.}
\\
For a subsequence, the family of rescaled magnetizations~$\lb \widetilde{m}_h \colon \Omega_1 \rightarrow \mathbb{S}^2 \rb_{h \downarrow 0}$, defined \linebreak as~$\widetilde{m}_h(x',x_3)=m_h(x',hx_3)$, converges strongly in~$L^p(\Omega_1,\R^3)$, for every~$p \in [1,+\infty)$, to a map~$\widetilde{m}=(\widetilde{m}',0) \in W^{1,q}(\Omega_1,\R^3)$, for every~$q \in [1,2)$, such that~$\lv \widetilde{m} \rv = \lv \widetilde{m}' \rv = 1$ and~$\dr_3 \widetilde{m} = 0$, i.e.~$\widetilde{m}=\widetilde{m}(x') \in W^{1,q}(\Omega,\mathbb{S}^1 \times \lb 0 \rb)$. Furthermore, the global Jacobian~$\mathcal{J}(\widetilde{m}')$ coincides with the measure~$J$ on~$\overline{\Omega}$ given at~\eqref{EQ_limit_jacobian_2D}. 
\end{itemize}
\end{theorem}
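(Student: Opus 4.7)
The plan is to derive both parts from the dimension reduction Theorem \ref{THM_reduction3D2D} and the two-dimensional second-order result Theorem \ref{THM_1.4_IK21+DMI}, applied to the averaged in-plane map $v_\epsilon := \overline{m}'_h$. The first-order Theorem \ref{THM_9_order1_IK22+DMI} already supplies the Jacobian convergence needed as input.

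\textbf{Proof of (i).} I would first invoke Theorem \ref{THM_reduction3D2D} in the narrower regime \eqref{DEF_regime3D_log}, which gives $\lv \log \epsilon \rv E_h(m_h) \geq E_{\epsilon,\eta}^\delta(\overline{m}'_h) - o_h(1)$. Combined with hypothesis \eqref{EQ_limsup_order2_3D}, this yields
\begin{equation*}
\limsup_{h \to 0} \Bigl( E_{\epsilon,\eta}^\delta(\overline{m}'_h) - \pi \lv \log \epsilon \rv \sum_{j=1}^N \lv d_j \rv \Bigr) < +\infty,
\end{equation*}
so $(\overline{m}'_h)$ verifies the 2D assumption \eqref{EQ_limsup_order2_2D}. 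Since Theorem \ref{THM_9_order1_IK22+DMI}(i) already provides the Jacobian convergence \eqref{EQ_convergence_jacobian_2D}, I can apply Theorem \ref{THM_1.4_IK21+DMI}(i) to conclude $d_j \in \lb \pm 1 \rb$, $\sum_{j=1}^N \lv d_j \rv = N$, and $\liminf_{h \to 0} ( E_{\epsilon,\eta}^\delta(\overline{m}'_h) - N \pi \lv \log \epsilon \rv ) \geq W_\Omega^\delta(\lb (a_j, d_j) \rb) + N \gamma_0$. Multiplying the dimension reduction inequality by $\lv \log \epsilon \rv$ and substituting transports this bound to the claimed second-order lower bound for $\lv \log \epsilon \rv (E_h(m_h) - N \pi)$.

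\textbf{Proof of (ii).} I would work on the fixed slab $\Omega_1 = \Omega \times (0,1)$ via the rescaling $\widetilde{m}_h(x',x_3) := m_h(x', h x_3)$. The exchange energy transforms as
\begin{equation*}
\int_{\Omega_1} \bigl( \lv \nabla' \widetilde{m}_h \rv^2 + h^{-2} \lv \partial_3 \widetilde{m}_h \rv^2 \bigr) \d x = \frac{1}{h} \int_{\Omega_h} \lv \nabla m_h \rv^2 \d x \leq C \lv \log \epsilon \rv,
\end{equation*}
so $\lV \partial_3 \widetilde{m}_h \rV_{L^2(\Omega_1)}^2 \leq C h^2 \lv \log \epsilon \rv \to 0$ in regime \eqref{DEF_regime3D} (since $\lv \log \epsilon \rv \ll \lv \log h \rv$). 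A one-dimensional Poincar\'e inequality in $x_3$ then yields $\widetilde{m}_h - \overline{m}_h \to 0$ in $L^2(\Omega_1, \R^3)$. Next, Theorem \ref{THM_1.4_IK21+DMI}(ii) applied to $\overline{m}'_h$ provides the uniform $W^{1,q}$-bound and strong $L^p$-convergence $\overline{m}'_h \to e^{i \widehat{\phir}_0}$ for all $p \in [1, +\infty)$, $q \in [1,2)$. Combining, $\widetilde{m}'_h \to e^{i \widehat{\phir}_0}$ in $L^2(\Omega_1)$; and since $\lv \widetilde{m}_h \rv^2 = \lv \widetilde{m}'_h \rv^2 + \widetilde{m}_{h,3}^2 = 1$ with $\lv e^{i \widehat{\phir}_0} \rv = 1$ a.e., I deduce $\widetilde{m}_{h,3} \to 0$ in $L^2(\Omega_1)$. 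Hence $\widetilde{m}_h \to (e^{i \widehat{\phir}_0}, 0) =: \widetilde{m}$ in $L^2(\Omega_1)$, and the uniform $L^\infty$-bound $\lv \widetilde{m}_h \rv = 1$ upgrades this to $L^p$-convergence for every $p < +\infty$ by dominated convergence. The limit $\widetilde{m}$ inherits its $x_3$-independence, the constraint $\widetilde{m} \in W^{1,q}(\Omega, \mathbb{S}^1 \times \lb 0 \rb)$, and the regularity from $e^{i \widehat{\phir}_0}$; finally, $\mathcal{J}(\widetilde{m}') = J$ follows from the distributional continuity of the global Jacobian under weak $W^{1,q}$ plus strong $L^p$ convergence of $\overline{m}'_h \to e^{i \widehat{\phir}_0}$, combined with Theorem \ref{THM_9_order1_IK22+DMI}(i).

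\textbf{Main difficulty.} The most delicate point is using the dimension reduction in its sharp form: after multiplication by $\lv \log \epsilon \rv$, the error $o_h(1/\lv \log \epsilon \rv)$ must remain a genuine $o_h(1)$, which is precisely why the narrower regime \eqref{DEF_regime3D_log} is imposed. Verifying this requires tracking each remainder contribution in the $x_3$-averaging — in particular the nonlocal $\dot{H}^{-1/2}$ volume-charge term absent from $E_{\epsilon,\eta}^\delta$ — at the refined scale $o_h(1/\lv \log \epsilon \rv)$ rather than merely $o_h(1)$. Once Theorem \ref{THM_reduction3D2D} is available at this precision, the rest of the argument is a routine combination of the two-dimensional ingredients with the $x_3$-Poincar\'e inequality.
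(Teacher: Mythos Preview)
Your proof of part \textit{(i)} is correct and follows exactly the paper's argument: dimension reduction at the sharper scale $o_h(1/\lv\log\epsilon\rv)$ in regime~\eqref{DEF_regime3D_log}, transfer of hypothesis~\eqref{EQ_limsup_order2_3D} to the 2D bound~\eqref{EQ_limsup_order2_2D}, then Theorem~\ref{THM_1.4_IK21+DMI}\textit{(i)}.

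For part \textit{(ii)} your argument is also correct but differs in presentation from the paper. The paper does not redo the Poincar\'e-in-$x_3$ argument; instead it shows that the DMI contribution $\frac{1}{\lv\log\epsilon\rv}\int_\Omega 2\delta\cdot\nabla'\overline{m}'_h\wedge\overline{m}'_h\,\d x'$ is $O(1/\lv\log\epsilon\rv)$ via the bound~\eqref{limsup_DMI_term}, deduces that $E_h^0(m_h)$ (the energy without DMI) also satisfies the second-order estimate, and then cites~\cite[Theorem~9\textit{(iv)}]{IK22} as a black box for the compactness of the rescaled magnetizations. Your route---Theorem~\ref{THM_1.4_IK21+DMI}\textit{(ii)} for $\overline{m}'_h$, Poincar\'e for $\widetilde{m}_h-\overline{m}_h$, and the constraint $\lv\widetilde{m}_h\rv=1$ to kill the third component---is precisely what that cited result contains, so you have effectively unpacked it. Both approaches are equivalent; yours is more self-contained within the present paper, while the paper's is shorter at the cost of an external citation. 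Your justification that $\mathcal{J}(\widetilde{m}')=J$ via weak-$W^{1,q}$/strong-$L^p$ continuity of the global Jacobian is the right mechanism and matches how the identification is obtained in~\cite{IK22}.
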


For the upper bound part in the~$\Gamma$-expansion, the recovery sequence can be chosen to be invariant in the thickness direction~$x_3$ and to be in-plane, i.e., they take values into~$\mathbb{S}^1 \times \lb 0 \rb$.

\begin{theorem}
\label{THM_10_IK22+DMI}
Let~$\Omega_h=\Omega \times (0,h)$ with~$\Omega \subset \R^2$ a bounded, simply connected~$C^{1,1}$ domain, and~$\kappa$ be the curvature of~$\partial \Omega$. Let~$\lb a_j \in \partial \Omega \rb_{1 \leqslant j \leqslant N}$ be~$N \geqslant 1$ distinct points and~$d_j \in \Z \setminus \lb 0 \rb$ be the corresponding multiplicities, for~$j \in \lb 1,...,N \rb$, that satisfy~$\sum_{j=1}^N d_j = 2$.
Then in the regime~\eqref{DEF_regime3D}, we can construct a family~$\lb m_h=(m'_h,0) \rb_{h>0}$ of~$H^1(\Omega_h,\mathbb{S}^1 \times \lb 0 \rb)$ with the following properties.
\begin{itemize}
\item[\textit{(i)}] For every~$h>0$,~$m_h$ is independent of~$x_3$.
\item[\textit{(ii)}] $(\mathcal{J}(m_h'))_h$ converges to~$J=-\kappa\mathcal{H}^1 \llcorner \dr\Omega + \pi \sum_{j=1}^N d_j\Dirac_{a_j}$ as in~\eqref{EQ_convergence_jacobian_3D}.
\item[\textit{(iii)}] We have~$\lim\limits_{h \rightarrow 0} E_h(m_h) = \pi \sum_{j=1}^N \lv d_j \rv$.
\end{itemize}
Furthermore in the regime~\eqref{DEF_regime3D_log}, if~$d_j \in \lb \pm 1 \rb$ for every~$j \in \lb 1,...,N \rb$, then~$\lb m_h \rb_{h \downarrow 0}$ can be chosen such that
\begin{align*}
\lim\limits_{h \rightarrow 0} \lv \log \epsilon \rv \left( E_h(m_h) - N\pi \right) = W_\Omega^\delta(\lb (a_j,d_j) \rb) + N\gamma_0
\end{align*}
where~$\gamma_0=\pi\log\frac{e}{4\pi}$ and~$W_\Omega^\delta(\lb(a_j,d_j)\rb)$ is the renormalised energy defined in~\eqref{DEF_renormalised_energy}.
\end{theorem}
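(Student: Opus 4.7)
The plan is to reduce the construction to the two-dimensional recovery sequence already produced in Theorem~\ref{THM_1.5_IK21+DMI} and then invoke the \emph{equality case} of the dimension reduction Theorem~\ref{THM_reduction3D2D}. The underlying observation is that, since we are allowed to choose $m_h$ independent of $x_3$ and with $m_{h,3}\equiv 0$, the 3D energy $E_h(m_h)$ is already exactly captured (up to $o_h$--error) by $\tfrac{1}{|\log\epsilon|}E_{\epsilon,\eta}^\delta(\overline{m}'_h)$, so the full upper bound is inherited from the 2D one.

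First I would apply Theorem~\ref{THM_1.5_IK21+DMI} to the prescribed configuration $\{(a_j,d_j)\}_{1\leqslant j\leqslant N}$ (recall that \eqref{DEF_regime3D} implies \eqref{DEF_regime2D}, as noted in footnote~\ref{foot}). This provides a family $(v_\epsilon)_\epsilon \subset H^1(\Omega,\mathbb{S}^1)$ whose global Jacobians $\mathcal{J}(v_\epsilon)$ converge, in the sense of \eqref{EQ_convergence_jacobian_2D}, to $J=-\kappa\mathcal{H}^1\llcorner\partial\Omega+\pi\sum_j d_j\Dirac_{a_j}$, and such that
\begin{equation*}
\lim_{\epsilon\to 0}\frac{1}{|\log\epsilon|}E_{\epsilon,\eta}^\delta(v_\epsilon)=\pi\sum_{j=1}^N|d_j|.
\end{equation*}
Then I would define the 3D recovery sequence $m_h(x',x_3):=(v_{\epsilon(h)}(x'),0)$ on $\Omega_h$. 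By construction, $m_h$ is $x_3$-independent (giving (i)), takes values into $\mathbb{S}^1\times\{0\}$, is in $H^1(\Omega_h)$ since $v_\epsilon$ is in $H^1(\Omega)$, and satisfies $\overline{m}_h=(v_\epsilon,0)$, hence $\overline{m}'_h=v_\epsilon$. In particular, $\mathcal{J}(m'_h)=\mathcal{J}(v_\epsilon)\to J$, which gives~(ii).

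Next I would invoke the equality case of Theorem~\ref{THM_reduction3D2D}: since $m_h=m_h(x')$, $m_{h,3}=0$ and $|m'_h|=1$, we have, in the regime \eqref{DEF_regime3D},
\begin{equation*}
E_h(m_h)=\frac{1}{|\log\epsilon|}E_{\epsilon,\eta}^\delta(v_\epsilon)-o_h(1),
\end{equation*}
and combined with the 2D limit above this yields (iii). For the refined statement, in the narrower regime~\eqref{DEF_regime3D_log} with $d_j\in\{\pm 1\}$, I would instead take the sharper recovery sequence from Theorem~\ref{THM_1.5_IK21+DMI} satisfying
\begin{equation*}
\lim_{\epsilon\to 0}\bigl(E_{\epsilon,\eta}^\delta(v_\epsilon)-N\pi|\log\epsilon|\bigr)=W_\Omega^\delta(\{(a_j,d_j)\})+N\gamma_0.
\end{equation*}
The sharper equality provided by Theorem~\ref{THM_reduction3D2D} in regime~\eqref{DEF_regime3D_log} reads $E_h(m_h)=\tfrac{1}{|\log\epsilon|}E_{\epsilon,\eta}^\delta(v_\epsilon)-o_h(1/|\log\epsilon|)$; multiplying by $|\log\epsilon|$ and subtracting $N\pi|\log\epsilon|$ gives
\begin{equation*}
|\log\epsilon|\bigl(E_h(m_h)-N\pi\bigr)=\bigl(E_{\epsilon,\eta}^\delta(v_\epsilon)-N\pi|\log\epsilon|\bigr)-o_h(1)\longrightarrow W_\Omega^\delta(\{(a_j,d_j)\})+N\gamma_0,
\end{equation*}
which is the desired second-order expansion.

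There is no genuine obstacle here: the theorem is essentially a corollary of the 2D $\Gamma$-upper bound combined with the equality case of the dimension reduction. The only point to verify carefully is that the hypotheses of the equality case in Theorem~\ref{THM_reduction3D2D} are met, i.e.\ that the lifted sequence is $x_3$-invariant, in-plane, and $\mathbb{S}^1$-valued; this is automatic from our choice. The regime assumptions on $(h,\eta,\widehat{D})$ are preserved because the 2D regime~\eqref{DEF_regime2D} is implied by the 3D regimes~\eqref{DEF_regime3D} and~\eqref{DEF_regime3D_log}, so that Theorem~\ref{THM_1.5_IK21+DMI} is indeed applicable at parameter $\epsilon=\epsilon(h)$.
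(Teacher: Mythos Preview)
Your proposal is correct and follows essentially the same approach as the paper: lift the 2D recovery sequence $(v_\epsilon)$ from Theorem~\ref{THM_1.5_IK21+DMI} trivially to $m_h(x',x_3)=(v_\epsilon(x'),0)$, then apply the equality case of Theorem~\ref{THM_reduction3D2D} in each regime to transfer the first- and second-order 2D limits to $E_h(m_h)$. The paper's proof is identical in structure and uses the same two ingredients.
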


We conclude with the asymptotic behaviour of minimisers of~$E_h$:\footnote{The existence of minimisers of~$E_h$ is proved in~Corollary~\ref{cor:mini}.}

\begin{corollary}
\label{COR_11_IK22+DMI}
Let~$\Omega_h=\Omega \times (0,h)$ with~$\Omega \subset \R^2$ a bounded, simply connected~$C^{1,1}$ domain, and~$\kappa$ be the curvature of~$\partial \Omega$. In the regime~\eqref{DEF_regime3D}, for every family~$\lb m_h \rb_h$ of minimisers of~$E_h$ on~$H^1(\Omega_h,\mathbb{S}^2)$, there exists a subsequence~$h \rightarrow 0$ such that the global Jacobians~$\mathcal{J}(\overline{m}'_h)$ of the in-plane averages~$\overline{m}'_h$ converge as in~\eqref{EQ_convergence_jacobian_3D} to the measure
\begin{align*}
J=-\kappa \mathcal{H}^1 \llcorner \dr\Omega +\pi(\Dirac_{a_1}+\Dirac_{a_2})
\end{align*}
where $a_1$ and $a_2$ are two points on $\dr\Omega$. Moreover, we have
\begin{align*}
\lim\limits_{h \rightarrow 0} E_h(m_h) = 2\pi.
\end{align*}
Furthermore, in the regime~\eqref{DEF_regime3D_log}, we have that $a_1 \neq a_2$, the pair~$(a_1,a_2)$ minimises the renormalised energy (for the multiplicities~$d_1=d_2=1$) over the set $\lb(\tilde{a}_1,\tilde{a}_2) \in \dr\Omega \times \dr\Omega : \tilde{a}_1 \neq \tilde{a}_2\rb$, i.e.
\begin{align*}
W_\Omega^\delta(\lb(a_1,1),(a_2,1)\rb)
= \min \lb W_\Omega^\delta(\lb(\tilde{a}_1,1),(\tilde{a}_2,1)\rb) : \tilde{a}_1 \neq \tilde{a}_2 \in \dr\Omega \rb,
\end{align*}
and
\begin{align*}
\lim\limits_{h \rightarrow 0} \lv \log \epsilon \rv ( E_h(m_h)-2\pi ) = W_\Omega^\delta(\lb(a_1,1),(a_2,1)\rb) + 2\gamma_0
\end{align*}
where~$\gamma_0=\pi\log\frac{e}{4\pi}$.
\end{corollary}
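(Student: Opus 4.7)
The plan is to run a sandwich argument pairing the three-dimensional recovery sequences of Theorem~\ref{THM_10_IK22+DMI} against the lower bounds in Theorems~\ref{THM_9_order1_IK22+DMI} and~\ref{THM_9_order2_IK22+DMI}, mirroring the two-dimensional Corollary~\ref{COR_1.6_IK21+DMI}. Since the heavy analytic work is already packaged into those theorems, what remains is arithmetic bookkeeping of the multiplicities~$d_j$ and the matching of optimal test configurations at the second order.

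\emph{First-order statement in regime~\eqref{DEF_regime3D}.} I would first apply Theorem~\ref{THM_10_IK22+DMI} with $N = 2$, $d_1 = d_2 = 1$ and any pair of distinct points on $\dr\Omega$ to produce a competitor $\tilde m_h$ with $\lim_{h \downarrow 0} E_h(\tilde m_h) = 2\pi$. Minimality gives $\limsup E_h(m_h) \leq 2\pi$, so~\eqref{EQ_limsup_order1_3D} holds and Theorem~\ref{THM_9_order1_IK22+DMI} supplies a subsequence along which $\mathcal{J}(\overline m_h')$ converges to $J = -\kappa \mathcal{H}^1 \llcorner \dr\Omega + \pi \sum_{j=1}^N d_j \Dirac_{a_j}$ with $d_j \in \Z \setminus \lb 0 \rb$ and $\sum_j d_j = 2$, together with $\liminf E_h(m_h) \geq \pi \sum_j \lv d_j \rv \geq 2\pi$. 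The two bounds pinch, forcing $E_h(m_h) \to 2\pi$ and $\sum_j \lv d_j \rv = 2$. Combined with $\sum_j d_j = 2$ and $d_j \neq 0$, this leaves only $(N; \lb d_j \rb) = (1; \lb 2 \rb)$ or $(2; \lb 1, 1 \rb)$, both of which can be written in the stated form $-\kappa \mathcal{H}^1 \llcorner \dr\Omega + \pi(\Dirac_{a_1} + \Dirac_{a_2})$ by allowing $a_1 = a_2$ in the degenerate case.

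\emph{Second-order statement in regime~\eqref{DEF_regime3D_log}.} Corollary~\ref{COR_7_IK22+DMI} provides a minimiser $(a_1^\star, a_2^\star)$ of $W_\Omega^\delta(\lb (\cdot, 1), (\cdot, 1) \rb)$ over distinct pairs in $\dr\Omega$. The sharper part of Theorem~\ref{THM_10_IK22+DMI} then yields a competitor $\tilde m_h$ with
$$\lim_{h \downarrow 0} \lv \log \epsilon \rv \bigl( E_h(\tilde m_h) - 2\pi \bigr) = W_\Omega^\delta(\lb (a_1^\star, 1), (a_2^\star, 1) \rb) + 2\gamma_0,$$
so minimality of $m_h$ delivers the precise bound~\eqref{EQ_limsup_order2_3D}. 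Theorem~\ref{THM_9_order2_IK22+DMI}\textit{(i)} now applies: it forces $d_j \in \lb \pm 1 \rb$, which combined with $\sum_j \lv d_j \rv = 2$ excludes the degenerate case and gives $N = 2$, $d_1 = d_2 = 1$, hence $a_1 \neq a_2$; and it provides
$$\liminf_{h \downarrow 0} \lv \log \epsilon \rv \bigl( E_h(m_h) - 2\pi \bigr) \geq W_\Omega^\delta(\lb (a_1, 1), (a_2, 1) \rb) + 2\gamma_0.$$
Chaining with the optimality inequality $W_\Omega^\delta(\lb (a_1^\star, 1), (a_2^\star, 1) \rb) \leq W_\Omega^\delta(\lb (a_1, 1), (a_2, 1) \rb)$ and the matching upper bound collapses all inequalities to equalities, yielding both the minimality of $(a_1, a_2)$ and the announced second-order expansion of $\min E_h$.

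\emph{Main obstacle.} The one subtle point is that the first-order pinch alone cannot distinguish the degenerate concentration $(N, d_1) = (1, 2)$ from the nondegenerate $(N, d_1, d_2) = (2, 1, 1)$; ruling out the former requires the multiplicity restriction $d_j \in \lb \pm 1 \rb$ coming from the second-order lower bound of Theorem~\ref{THM_9_order2_IK22+DMI}, which in turn rests on the sharper recovery sequence available only in the narrower regime~\eqref{DEF_regime3D_log}. This is exactly why the distinctness $a_1 \neq a_2$ is asserted only under~\eqref{DEF_regime3D_log}, while the broader regime~\eqref{DEF_regime3D} can only control the total mass and must allow the degenerate configuration $a_1 = a_2$.
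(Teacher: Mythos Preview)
Your proposal is correct and follows essentially the same approach as the paper's proof: both arguments sandwich the minimal energy between the recovery sequence of Theorem~\ref{THM_10_IK22+DMI} and the lower bounds of Theorems~\ref{THM_9_order1_IK22+DMI} and~\ref{THM_9_order2_IK22+DMI}, first at order one to pin down $\sum_j |d_j|=2$, then at order two in the narrower regime~\eqref{DEF_regime3D_log} to force $d_j\in\{\pm1\}$ and hence $N=2$, $a_1\neq a_2$. Your identification of the main obstacle---that the degenerate case $(N,d_1)=(1,2)$ can only be excluded via the second-order multiplicity restriction---matches the paper exactly.
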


In the case of the unit disk~$\Omega=B_1$, combining Theorem~\ref{THM_6_IK22+DMI} and Corollary~\ref{COR_11_IK22+DMI}, we deduce that the minimal energy in the regime~\eqref{DEF_regime3D_log} is given by
\begin{align*}
\min_{H^1(\Omega_h,\mathbb{S}^2)} E_h
= 2\pi\lv \log \epsilon \rv -2\pi \log \lv a_1^\ast-a_2^\ast \rv
+2\pi\delta \cdot \left( (a_1^\ast)^\perp+(a_2^\ast)^\perp \right)
+2\pi\log\frac{e}{4\pi}+o_h(1)
\end{align*}
where~$(a_1^\ast,a_2^\ast)$ is the optimal pair given in Theorem~\ref{THM_minimisers_unit_disk}.

\subsection{Related models}
\label{INTROSECTION_related_models}

Our thin-film regime~\eqref{DEF_regime3D}, that is the same as in~\cite{IK22} (where $D=0$), favors boundary vortices while taking into account the Dzyaloshinskii-Moriya interaction $D\neq 0$. However, assuming that~$h \ll 1$, there are other convergence rates of~$\eta(h) \rightarrow 0$ as~$h \rightarrow 0$ (different than~\eqref{DEF_regime3D}), that lead to different thin-film regimes and thus to different effects on the magnetizations in the limit~$h \rightarrow 0$. More precisely, three types of singular pattern of the magnetization can occur: N\'eel walls, interior vortices and boundary vortices. The choice of the asymptotic regime corresponds in general to the energy ordering of these three patterns (for more details, see~\cite{DKMO04}).

The micromagnetic energy has been studied in many thin-film regimes, often without taking into account the Dzyaloshinskii-Moriya interaction.
In the regime of small films, where~$\eta>0$ is fixed, Gioia-James~\cite{GioiaJames} proved that the~$\Gamma$-limit of the micromagnetic energy is minimised by any constant and in-plane magnetization. Recently, Davoli et. al.~\cite{DDFPR20} studied the~$\Gamma$-convergence of the micromagnetic energy with~DMI, and showed that in the limiting energy, the~DMI energy contributes to increase the shape anisotropy of the thin film.

For relatively small films, i.e.,~$\eta^2 \gg h \lv \log h \rv$, Kohn-Slastikov~\cite{KS05} (see also Carbou~\cite{Carbou01}) derived a~$\Gamma$-limit that reduces to a boundary penalisation term (coming from the stray-field energy, see Section~\ref{INTROSECTION_dimension_reduction}) for magnetizations that are constant and in-plane.
For slightly larger films, \linebreak where~$\eta^2=\alpha h \lv \log h \rv$ with~$0<\alpha<+\infty$, also studied by Kohn-Slastikov~\cite{KS05}, there is a competition between exchange and stray-field energies. The limiting energy, for maps~$m \in H^1(\Omega,\mathbb{S}^1)$ is composed of the exchange term and a boundary penalisation term (as in the previous regime). The limiting magnetizations are in-plane but no longer constant. Recently, L'Official~\cite{LO23} considered the model of Kohn-Slastikov with~DMI, assuming (after a rescaling) that the~DMI density is of the same order as the exchange and boundary energy, and derived a~$\Gamma$-limit for limiting in-plane magnetizations. We also mention the recent work of Di~Fratta et. al.~\cite{DiFratta2023} for the study of this regime with~DMI.

In the regime~$h \ll \eta^2 \ll h \lv \log h \rv$, Kurzke~\cite{Kurzke06},~\cite{Kurzke06-2},~\cite{Kurzke07} and Ignat-Kurzke~\cite{IK22} showed that the limiting magnetization develops boundary vortices due to topological arguments. In the narrower regime~$h \log \lv \log h \rv \ll \eta^2 \ll h \lv \log h \rv$, Ignat-Kurzke~\cite{IK22} derive a~$\Gamma$-expansion at the second order of the limiting energy. Our aim in this paper consists in taking into account the DMI in these thin-film regimes, and more precisely to study the influence of the DMI on the location of the boundary vortices.

Several other thin-film regimes have been investigated before. In the regime~$\eta^2=O(h)$, Moser~\cite{Moser03},~\cite{Moser04},~\cite{Moser05} showed the emergence of boundary vortices, again for topological reasons. In large thin-films corresponding to the regime~$\eta^2 \ll h$, studied by Ignat-Otto~\cite{IO12} and Ignat-Kn\"upfer~\cite{IKn10}, other types of topological singularities occur as interior vortices or N\'eel walls.

\subsection{Outline of the paper}
\label{INTROSECTION_outline}

The rest of the paper is organised as follows.
In Section~2, we prove Theorems~\ref{THM_1.2_IK21+DMI},~\ref{THM_1.4_IK21+DMI} and~\ref{THM_1.5_IK21+DMI}, that give the~$\Gamma$-convergence of the two-dimensional reduced energy~$E_{\epsilon,\eta}^\delta$. Moreover, we study minimisers of the renormalised energy~$W_\Omega^\delta$, and prove Theorems~\ref{THM_6_IK22+DMI} and~\ref{THM_minimisers_unit_disk}, as well as Corollaries~\ref{COR_7_IK22+DMI} and~\ref{COR_1.6_IK21+DMI}.
In Section~3, we reduce the~3D energy~$E_h$ to the~2D energy~$E_{\epsilon,\eta}^\delta$, and prove Theorem~\ref{THM_reduction3D2D}. We also prove the~$\Gamma$-convergence of the~3D energy~$E_h$, i.e. Theorems~\ref{THM_9_order1_IK22+DMI},~\ref{THM_9_order2_IK22+DMI} and~\ref{THM_10_IK22+DMI}, and deduce Corollary~\ref{COR_11_IK22+DMI}.

\bigskip

\noindent{\bf Acknowledgment.} R.I. is partially supported by the ANR projects ANR-21-CE40-0004 and ANR-22-CE40-0006-01.

\section{Two-dimensional reduced model for maps $v \colon \Omega \subset \R^2 \rightarrow \R^2$}
\label{SECTION_2D_ENERGY}

Since all quantities in this section are two-dimensional quantities, \textbf{we drop the primes~$'$ in the notations}, e.g.~$x=(x_1,x_2)$ instead of~$x'$,~$\nabla=(\dr_1,\dr_2)$ instead of~$\nabla'$, etc.

\subsection{Approximation by $\mathbb{S}^1$-valued maps}
\label{SUBSECTION_approximation_by_S1_valued_maps}

Given a map~$v \colon \Omega \rightarrow \R^2$ such that~$E_{\epsilon,\eta}^\delta(v) = O(\lv \log \epsilon \rv)$, we show in this section that~$v$ can be approximated by a~$\mathbb{S}^1$-valued map~$\m \colon \Omega \rightarrow \mathbb{S}^1$ in the regime~\eqref{DEF_regime2D}. The idea is to prove that in this context, we have~$E_{\epsilon,\eta}^0(v)=O(\lv \log \epsilon \rv)$, i.e., we reduce to the case~$\delta=0$ and then use the approximation result of Ignat-Kurzke~\cite[Theorem 3.1]{IK21}.

\begin{lemma}
\label{LEM_for_THM_3.1_IK21+DMI}
Let~$\delta \in \R^2$ and~$\Omega \subset \mathbb{R}^2$ be a bounded, simply connected~$C^{1,1}$ domain. Assume~$\epsilon \rightarrow 0$ and~$\eta=\eta(\epsilon) \rightarrow 0$ in the regime~\eqref{DEF_regime2D}.
For every~$v=v_\epsilon \colon \Omega \rightarrow \mathbb{R}^2$ satisfying~$E_{\epsilon,\eta}^\delta(v) = O(\left\vert \log \epsilon \right\vert)$, we have~$E_{\epsilon,\eta}^0(v)=O(\left\vert \log \epsilon \right\vert)$.
\end{lemma}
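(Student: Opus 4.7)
The plan is to bound the Dzyaloshinskii--Moriya contribution
$$D(v) := 2\int_\Omega \delta \cdot \nabla v \wedge v \,\d x$$
by a quantity of order $O(|\log\epsilon|)$ (in fact by $o(|\log\epsilon|)$). Since $E_{\epsilon,\eta}^0(v) = E_{\epsilon,\eta}^\delta(v) - D(v)$, this yields the conclusion directly.

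First, I would extract coercivity of $E_{\epsilon,\eta}^\delta$ from the hypothesis. Young's inequality applied pointwise gives
$$|2\delta \cdot (\nabla v \wedge v)| \leq \tfrac{1}{2}|\nabla v|^2 + 2|\delta|^2|v|^2,$$
and the elementary quadratic inequality $|v|^2 \leq 2 + (1-|v|^2)^2$, valid for every $v \in \R^2$, then yields
$$|D(v)| \leq \tfrac{1}{2}\int_\Omega|\nabla v|^2 \,\d x + 2|\delta|^2 \int_\Omega (1-|v|^2)^2 \,\d x + 4|\delta|^2|\Omega|.$$
In the regime~\eqref{DEF_regime2D}, $\eta \to 0$, so for $\epsilon$ small enough $4|\delta|^2 \eta^2 \leq 1$ and the middle term can be absorbed into $\frac{1}{2\eta^2}\int_\Omega (1-|v|^2)^2 \,\d x$. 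Plugging this back into $E_{\epsilon,\eta}^\delta(v)$, I obtain
$$E_{\epsilon,\eta}^\delta(v) \geq \tfrac{1}{2}\int_\Omega|\nabla v|^2 \,\d x + \tfrac{1}{2\eta^2}\int_\Omega (1-|v|^2)^2 \,\d x + \tfrac{1}{2\pi\epsilon}\int_{\partial\Omega}(v \cdot \nu)^2 \,\d\mathcal{H}^1 - 4|\delta|^2|\Omega|,$$
and the assumption $E_{\epsilon,\eta}^\delta(v) = O(|\log\epsilon|)$ yields the two a priori bounds $\int_\Omega|\nabla v|^2 \,\d x = O(|\log\epsilon|)$ and $\int_\Omega(1-|v|^2)^2 \,\d x = O(\eta^2|\log\epsilon|)$.

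To conclude, I would apply Cauchy--Schwarz directly to $D(v)$:
$$|D(v)| \leq 2|\delta|\,\Bigl(\int_\Omega|\nabla v|^2 \,\d x\Bigr)^{1/2}\Bigl(\int_\Omega|v|^2 \,\d x\Bigr)^{1/2}.$$
Using $|v|^2 \leq 2 + (1-|v|^2)^2$ once more gives $\int_\Omega |v|^2 \,\d x \leq 2|\Omega| + \int_\Omega (1-|v|^2)^2 \,\d x = O(1) + O(\eta^2|\log\epsilon|)$; the assumption $|\log\epsilon| \ll |\log\eta|$ combined with $\eta \to 0$ implies $\eta^2|\log\epsilon| \ll \eta^2|\log\eta| = o(1)$, so $\int_\Omega|v|^2 \,\d x = O(1)$. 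Consequently $|D(v)| = O(|\log\epsilon|^{1/2}) = o(|\log\epsilon|)$, and therefore $E_{\epsilon,\eta}^0(v) = E_{\epsilon,\eta}^\delta(v) - D(v) = O(|\log\epsilon|)$, as claimed.

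This reduces to a coercivity/Young's inequality computation and I do not anticipate a deep obstacle. The one delicate point is that $v$ is merely $\R^2$-valued (not $\mathbb{S}^1$-valued), so the quadratic $|v|^2$ produced by Young's inequality has to be absorbed using the Ginzburg--Landau potential; the pointwise bound $|v|^2 \leq 2 + (1-|v|^2)^2$ is exactly what makes this absorption work, and the smallness $\eta \to 0$ from~\eqref{DEF_regime2D} is crucial both for this absorption and for the final control on $\int_\Omega|v|^2\,\d x$.
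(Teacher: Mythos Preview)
Your proof is correct and follows essentially the same approach as the paper: use Young's inequality to bound the DMI term by $\tfrac{1}{2}\int|\nabla v|^2 + 2|\delta|^2\int|v|^2$, then absorb $|v|^2$ via the Ginzburg--Landau potential (the paper splits into $\{|v|^2\geq 2\}$ and its complement, while your pointwise inequality $|v|^2 \leq 2 + (1-|v|^2)^2$ achieves the same thing more cleanly). Your second step via Cauchy--Schwarz, yielding the sharper bound $|D(v)|=O(|\log\epsilon|^{1/2})$, is a pleasant refinement but not strictly needed: already after your first step you have $|D(v)| \leq \tfrac{1}{2}E_{\epsilon,\eta}^0(v) + 4|\delta|^2|\Omega|$, which rearranges directly to $E_{\epsilon,\eta}^0(v) \leq 2E_{\epsilon,\eta}^\delta(v) + 8|\delta|^2|\Omega|$, exactly the paper's conclusion.
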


\begin{proof}
We have by Young's inequality
\begin{align*}
\left\vert E_{\epsilon,\eta}^\delta(v)
-E_{\epsilon,\eta}^0(v) \right\vert
= 2 \left\vert \int_\Omega \delta \cdot \nabla v \wedge v \ \d x \right\vert
& \leqslant \frac{1}{2} \int_\Omega \left( \left\vert \nabla v \right\vert^2 + 4 \left\vert \delta \right\vert^2 \vert v \vert^2 \right) \d x.
\end{align*}
Setting~$S=\lbrace x \in \Omega : \left\vert v (x) \right\vert^2 \geqslant 2 \rbrace$, it follows that for every~$x \in S$, $\left\vert v(x) \right\vert^2-1 
\geqslant \frac{\lv v(x) \rv^2}{2}
\geqslant \frac{1}{\sqrt{2}} \left\vert v(x) \right\vert$, thus
\begin{align*}
\left\vert E_{\epsilon,\eta}^\delta(v)
-E_{\epsilon,\eta}^0(v) \right\vert
& \leqslant \frac{1}{2} \int_\Omega \left\vert \nabla v \right\vert^2 \d x
+ 2 \lv \delta \right\vert^2 \int_S \vert v \vert^2 \d x
+ 2 \left\vert \delta \right\vert^2 \int_{\Omega \setminus S} \vert v \vert^2 \d x
\\
& \leqslant \frac{1}{2} \int_\Omega \left\vert \nabla v \right\vert^2 \d x
+ 4 \left\vert \delta \right\vert^2 \int_S \left( 1-\vert v \vert^2 \right)^2 \d x
+ 4 \left\vert \delta \right\vert^2 \left\vert \Omega \setminus S \right\vert
\\
& \leqslant \frac{1}{2} \int_\Omega \left\vert \nabla v \right\vert^2 \d x
+ \frac{1}{2\eta^2} \int_\Omega \left( 1-\vert v \vert^2 \right)^2 \d x
+ 4 \left\vert \delta \right\vert^2 \left\vert \Omega \right\vert
\\
& \leqslant \frac{1}{2} E_{\epsilon,\eta}^0(v)
+ 4 \left\vert \delta \right\vert^2 \left\vert \Omega \right\vert
\end{align*}
because~$\left\vert \delta \right\vert^2 \ll \frac{1}{\eta^2}$ if~$\epsilon>0$ is small in the regime~\eqref{DEF_regime2D}. It follows that
\begin{equation*}
E_{\epsilon,\eta}^0(v)
\leqslant 2 E_{\epsilon,\eta}^\delta(v)
+ 8 \lv \delta \right\vert^2 \left\vert \Omega \right\vert
= O(\left\vert \log \epsilon \right\vert).
\end{equation*}
\end{proof}

\begin{remark}
\label{REM_LEM_for_THM_3.1_IK21+DMI}
We assumed in the above lemma that~$\delta \in \R^2$ is constant. However, the above proof shows that Lemma~\ref{LEM_for_THM_3.1_IK21+DMI} holds under the weaker assumption~$\lv \delta \rv = O(\lv \log \epsilon \rv^{1/2})$.
\end{remark}

\begin{notation}
Let~$\delta \in \R^2$,~$\epsilon>0$ and~$\eta=\eta(\epsilon)>0$. For any open set~$G \subset \Omega$ and~$v \colon \Omega \rightarrow \R^2$, we define the localised functional
\begin{equation*}
\begin{split}
E_{\epsilon,\eta}^\delta(v;G)
& = \int_G \left\vert \nabla v \right\vert^2 \d x
+ 2 \int_G \delta \cdot \nabla v \wedge v \ \d x
\\
& \quad
+ \frac{1}{\eta^2} \int_G \left( 1-\vert v \vert^2 \right)^2 \d x
+ \frac{1}{2\pi\epsilon} \int_{\overline{G} \cap \partial \Omega} (v \cdot \nu)^2 \d \mathcal{H}^1.
\end{split}
\end{equation*}
\end{notation}

The next theorem gives the approximation of~$v \colon \Omega \rightarrow \R^2$ in~$L^2(\Omega)$ by a~$\mathbb{S}^1$-valued \linebreak map~$V \colon \Omega \rightarrow \mathbb{S}^1$ that keeps the energy and global Jacobian close to the ones of~$v$. It is reminiscent from~\cite[Theorem~3.1]{IK21} where the case~$\delta=0$ was treated.

\begin{theorem}
\label{THM_3.1_IK21+DMI}
Let~$\delta \in \R^2$,~$\beta \in (\frac{1}{2},1)$,~$C>0$ and~$\Omega \subset \mathbb{R}^2$ be a bounded, simply connected~$C^{1,1}$ domain. Assume~$\epsilon \rightarrow 0$ and~$\eta=\eta(\epsilon) \rightarrow 0$ in the regime~\eqref{DEF_regime2D}.
There exist~$\epsilon_0>0$ and~$\widetilde{C}>0$, depending only on~$\beta$,~$C$,~$\delta$,~$\Omega$, the function~$\epsilon \mapsto \eta(\epsilon)$ and~$\widetilde{\beta} \in \left( 0,\frac{1-\beta}{6} \right)$ such that for every~$\epsilon \in (0,\epsilon_0)$ \linebreak and every~$v=v_\epsilon \colon \Omega \rightarrow \mathbb{R}^2$ satisfying~$E_{\epsilon,\eta}^\delta(v) \leqslant C \left\vert \log \epsilon \right\vert$, we can construct a unit-length \linebreak map~$V=V_\epsilon \colon \Omega \rightarrow \mathbb{S}^1$ that satisfies the following relations:
\begin{equation}
\label{eq_thm_3.1_IK21+DMI_1}
\int_\Omega \left\vert \m-v \right\vert^2 \d x
\lesssim \eta^{2\beta} E_{\epsilon,\eta}^0(v),
\ \ \int_{\partial \Omega} \left\vert \m-v \right\vert^2 \d \mathcal{H}^1
\lesssim \eta^\beta E_{\epsilon,\eta}^0(v),
\end{equation}
and
\begin{equation}
\label{eq_thm_3.1_IK21+DMI_2}
E_{\epsilon,\eta}^\delta(\m)
\leqslant E_{\epsilon,\eta}^\delta(v)
+ \widetilde{C} \eta^{\widetilde{\beta}} \left(
E_{\epsilon,\eta}^0(v) + \sqrt{E_{\epsilon,\eta}^0(v)}
\right).
\end{equation}
As a consequence, for every~$p \in [1,+\infty)$,\footnote{Note that~$\mathrm{jac}(V)=0$ as~$V \in H^1(\Omega,\mathbb{S}^1)$. \\ For~$A \in (W^{1,\infty}(\Omega))^\ast$, we define~$\Vert A \Vert_{(\mathrm{Lip}(\Omega))^\ast}=\sup \lb \langle A,\zeta \rangle \colon \zeta \in W^{1,\infty}(\Omega), \vert \nabla \zeta \vert \leqslant 1 \rb$ \\ and~$\Vert A \Vert_{(W_0^{1,\infty}(\Omega))^\ast}=\sup \lb \langle A,\zeta \rangle \colon \zeta \in W^{1,\infty}(\Omega), \vert \nabla \zeta \vert \leqslant 1, \zeta=0 \text{ on } \partial \Omega \rb$.}
\begin{equation}
\label{eq_thm_3.1_IK21+DMI_3}
\begin{split}
& \lim\limits_{\epsilon \rightarrow 0} 
\left\Vert V-v \right\Vert_{L^p(\Omega)} = 0,
\ \ 
\lim\limits_{\epsilon \rightarrow 0} 
\left\Vert V-v \right\Vert_{L^p(\partial \Omega)} = 0,
\\
& \left\Vert \mathrm{jac}(v) \right\Vert_{(W_0^{1,\infty})^\ast(\Omega)} \lesssim \eta^\beta E_{\epsilon,\eta}^0(v),
\ \text{and} \ \ 
\left\Vert \mathcal{J}(V)-\mathcal{J}(v) \right\Vert_{(\mathrm{Lip}(\Omega))^\ast} \lesssim \sqrt{\eta^\beta E_{\epsilon,\eta}^0(v)}.
\end{split}
\end{equation}
The map~$V$ also satisfies the following local estimate: for any open set~$G \subset \Omega$ independent of~$\epsilon$, there exists a constant~$\widetilde{C}_G>0$ such that
\begin{equation}
\label{eq_thm_3.1_IK21+DMI_4}
E_{\epsilon,\eta}^0(V;G_\eta)
\leqslant E_{\epsilon,\eta}^0(v;G)
+ \widetilde{C}_G\eta^{\widetilde{\beta}} \left(
E_{\epsilon,\eta}^0(v;G) + \sqrt{E_{\epsilon,\eta}^0(v;G)} 
\right)
\end{equation}
where
\begin{equation*}
G_\eta = \left\lbrace x \in G : \mathrm{dist}(x,\Omega \cap \partial G)>c_0\eta^\beta \right\rbrace,
\end{equation*}
with~$c_0>0$ depending only on~$\partial\Omega$.
\end{theorem}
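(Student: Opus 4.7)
\emph{Proof proposal.} The plan is to reduce the construction to the case $\delta=0$ already treated in Ignat--Kurzke~\cite{IK21}, and then to incorporate the DMI correction through an integration by parts. By Lemma~\ref{LEM_for_THM_3.1_IK21+DMI}, the hypothesis $E_{\epsilon,\eta}^\delta(v) \leqslant C\lv\log\epsilon\rv$ forces $E_{\epsilon,\eta}^0(v) = O(\lv\log\epsilon\rv)$ in the regime~\eqref{DEF_regime2D}, so~\cite[Theorem~3.1]{IK21} applies directly to $v$. It produces the map $V = V_\epsilon \colon \Omega \to \mathbb{S}^1$ realising the $L^2(\Omega)$- and $L^2(\partial\Omega)$-closeness estimates~\eqref{eq_thm_3.1_IK21+DMI_1}, the $\delta=0$ energy bound
\[
E_{\epsilon,\eta}^0(V) \leqslant E_{\epsilon,\eta}^0(v) + \widetilde{C}\eta^{\widetilde\beta}\bigl(E_{\epsilon,\eta}^0(v) + \sqrt{E_{\epsilon,\eta}^0(v)}\bigr),
\]
the local estimate~\eqref{eq_thm_3.1_IK21+DMI_4}, and both the $L^p$-convergences and Jacobian estimates in~\eqref{eq_thm_3.1_IK21+DMI_3} (none of these quantities depending on~$\delta$).

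It remains to upgrade the energy inequality to its DMI-dependent form~\eqref{eq_thm_3.1_IK21+DMI_2}. For this I would write
\[
E_{\epsilon,\eta}^\delta(V) - E_{\epsilon,\eta}^\delta(v)
= \bigl[E_{\epsilon,\eta}^0(V) - E_{\epsilon,\eta}^0(v)\bigr]
+ 2\int_\Omega \delta \cdot \bigl(\nabla V \wedge V - \nabla v \wedge v\bigr)\, \d x,
\]
and decompose the DMI integrand as $\nabla(V-v) \wedge V + \nabla v \wedge (V-v)$. The second piece is estimated directly by Cauchy--Schwarz using $\lV\nabla v\rV_{L^2}^2 \leqslant E_{\epsilon,\eta}^0(v)$ and the $L^2$-bound in~\eqref{eq_thm_3.1_IK21+DMI_1}, giving a contribution $\lesssim |\delta|\,\eta^\beta E_{\epsilon,\eta}^0(v)$. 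For the first piece I would integrate by parts,
\[
\int_\Omega \delta \cdot \nabla(V-v)\wedge V\, \d x
= \int_{\partial\Omega} (\delta\cdot\nu)\,(V-v)\wedge V\, \d \mathcal{H}^1
- \int_\Omega \delta \cdot (V-v)\wedge \nabla V\, \d x;
\]
the boundary term is controlled using $|V|=1$ and the $L^2(\partial\Omega)$-bound in~\eqref{eq_thm_3.1_IK21+DMI_1} by $\lesssim |\delta|\,\eta^{\beta/2}\sqrt{E_{\epsilon,\eta}^0(v)}$, while the bulk term is bounded by $|\delta|\,\lV V-v\rV_{L^2}\lV\nabla V\rV_{L^2} \lesssim |\delta|\,\eta^\beta E_{\epsilon,\eta}^0(v)$ using the $\delta=0$ energy bound on $V$ to control $\lV \nabla V \rV_{L^2}^2 \leqslant E_{\epsilon,\eta}^0(V)$. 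Since $\widetilde\beta < \tfrac{1-\beta}{6} < \tfrac{\beta}{2}$, all these remainders are absorbed into the form~\eqref{eq_thm_3.1_IK21+DMI_2}.

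The $L^p$-convergences in~\eqref{eq_thm_3.1_IK21+DMI_3} follow from the $L^2$-bounds in~\eqref{eq_thm_3.1_IK21+DMI_1} by interpolation against the uniform $L^4$-control on $v$ furnished by $\int_\Omega (1-|v|^2)^2 \leqslant \eta^2 E_{\epsilon,\eta}^0(v) = O(\eta^2\lv\log\epsilon\rv) \to 0$ together with $|V|=1$. The main obstacle is the boundary integral produced by the integration by parts above: it forces the use of the sharper $L^2(\partial\Omega)$-trace rate $\eta^{\beta}$ in~\eqref{eq_thm_3.1_IK21+DMI_1} rather than the bulk rate $\eta^{2\beta}$, a refinement which~\cite{IK21} obtains through a careful use of the boundary penalisation $\frac{1}{2\pi\epsilon}\int_{\partial\Omega}(v\cdot\nu)^2\,\d\mathcal{H}^1$ in the energy.
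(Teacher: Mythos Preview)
Your proposal is correct and follows essentially the same route as the paper: reduce to the $\delta=0$ construction of \cite[Theorem~3.1]{IK21} via Lemma~\ref{LEM_for_THM_3.1_IK21+DMI}, then control the DMI difference $\int_\Omega \delta\cdot(\nabla V\wedge V-\nabla v\wedge v)\,\d x$ by the same decomposition and integration by parts, using the $L^2(\Omega)$ and $L^2(\partial\Omega)$ closeness together with the gradient bound $\int_\Omega(|\nabla V|^2+|\nabla v|^2)\,\d x \lesssim E_{\epsilon,\eta}^0(v)$ from \cite{IK21}. The only cosmetic difference is that the paper combines your two bulk pieces into a single term $-\int_\Omega \delta\cdot(V-v)\wedge(\nabla V+\nabla v)\,\d x$ before estimating, and briefly addresses the harmless $\eta\leftrightarrow c_0\eta$ rescaling in the cited estimate from \cite{IK21}.
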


\begin{proof}
As~$E_{\epsilon,\eta}^0(v)=O(\lv \log \epsilon \rv)$ by Lemma~\ref{LEM_for_THM_3.1_IK21+DMI}, we apply~\cite[Theorem~3.1]{IK21} to prove the existence of~$V \colon \Omega \rightarrow \mathbb{S}^1$ that satisfies~\eqref{eq_thm_3.1_IK21+DMI_1},~\eqref{eq_thm_3.1_IK21+DMI_3} and
\begin{equation}
\label{eq_thm_3.1_IK21+DMI_5}
\int_\Omega \left(\vert\nabla V\vert^2+\vert\nabla v\vert^2\right) \d x \lesssim E_{\epsilon,\eta}^0(v).
\end{equation}
Moreover, by~\cite[Theorem~3.1, Equation~(31)]{IK21}, we know that
\begin{align*}
E_{\epsilon,\eta}^0(V)
& \leqslant E_{\epsilon,c_0\eta}^0(v)
+ \widetilde{C} \eta^{\widetilde{\beta}} 
\left(
E_{\epsilon,c_0\eta}^0(v) 
+ \sqrt{E_{\epsilon,c_0\eta}^0(v)} 
\right)
\end{align*}
for some~$c_0>0$. As~$\vert V \vert = 1$, note that~$E_{\epsilon,\eta}^0(V)=E_{\epsilon,c_0\eta}^0(V)$, so we can replace~$\eta$ by~$\widehat{\eta}=c_0\eta$. We still denote~$\eta$ instead of~$\widehat{\eta}$ in the following.
To prove~\eqref{eq_thm_3.1_IK21+DMI_2}, we compute
\begin{align*}
E_{\epsilon,\eta}^\delta(\m)
& = E_{\epsilon,\eta}^0(\m)
+ 2 \int_{\Omega} \delta \cdot \left( \nabla \m \wedge \m - \nabla v \wedge v \right) \d x + 2 \int_{\Omega} \delta \cdot \nabla v \wedge v \ \d x
\end{align*}
so that
\begin{align*}
E_{\epsilon,\eta}^\delta(V)
& \leqslant E_{\epsilon,\eta}^\delta(v)
+ \widetilde{C} \eta^{\widetilde{\beta}} 
\left(
E_{\epsilon,\eta}^0(v) 
+ \sqrt{E_{\epsilon,\eta}^0(v)} 
\right)
+ 2 \int_{\Omega} \delta \cdot \left( \nabla V \wedge V - \nabla v \wedge v \right) \d x.
\end{align*}
By integration by parts, we have
\begin{align*}
\int_\Omega \delta \cdot \left( \nabla V \wedge V - \nabla v \wedge v \right) \d x
& = \int_\Omega \delta \cdot \left( (\nabla V - \nabla v) \wedge \m - \nabla v \wedge (v-V) \right) \d x
\\
& = \int_{\dr\Omega} (\delta \cdot \nu)(V-v) \wedge V \ \d \mathcal{H}^1
- \int_\Omega \delta \cdot (V-v) \wedge (\nabla V + \nabla v) \d x.
\end{align*}
We deduce that
\begin{align*}
\lv \int_\Omega \delta \cdot \left( \nabla V \wedge V - \nabla v \wedge v \right) \d x \rv
& \leqslant
\int_{\dr\Omega} \lv \delta \cdot \nu \rv \vert V-v \vert \d\mathcal{H}^1
+ \lv \delta \rv \int_\Omega \vert V-v \vert \left( \vert \nabla V \vert + \lv \nabla v \rv \right) \d x
\\
& \lesssim \sqrt{\int_{\dr\Omega} \vert V-v \vert^2 \d\mathcal{H}^1}
+
\sqrt{\int_{\Omega} \vert V-v \vert^2 \d x}
\sqrt{\int_\Omega \left( \vert \nabla V \vert^2+\vert \nabla v \vert^2 \right) \d x},
\\
& \lesssim 
\eta^\beta E_{\epsilon,\eta}^0(v)
+ \eta^{\beta/2} \sqrt{E_{\epsilon,\eta}^0(v)}
\\
& \lesssim
\eta^{\widetilde{\beta}} 
\left(
E_{\epsilon,\eta}^0(v) 
+ \sqrt{E_{\epsilon,\eta}^0(v)} 
\right)
\end{align*}
where we used~\eqref{eq_thm_3.1_IK21+DMI_1},~\eqref{eq_thm_3.1_IK21+DMI_5} and~$\frac{\beta}{2} \geqslant \frac{1}{4} > \widetilde{\beta}$. We conclude to~\eqref{eq_thm_3.1_IK21+DMI_2}.
Finally,~\eqref{eq_thm_3.1_IK21+DMI_4} is reminiscent from~\cite[Theorem~3.1, Equation~(32)]{IK21} and the fact that~$E_{\epsilon,\eta}^0(V)=E_{\epsilon,c_0\eta}^0(V)$ as~$\vert V \vert =1$.
\end{proof}

\subsection{Lifting}
\label{SUBSECTION_lifting}

By Theorem~\ref{THM_3.1_IK21+DMI}, our study simplifies to the analysis of the energy functional~$E_{\epsilon,\eta}^\delta$ for~$\mathbb{S}^1$-valued maps~$V$. Such maps have a lifting (see Bethuel-Zheng~\cite{BethuelZheng}) on which we focus in the following:

\begin{lemma}
\label{LEM_4.1_IK21+DMI}
Let~$\delta \in \R^2$,~$\Omega \subset \mathbb{R}^2$ be a bounded, simply connected~$C^{1,1}$ domain and~$V \in H^1(\Omega,\mathbb{S}^1)$. There exists a lifting~$\phir \in H^1(\Omega,\mathbb{R})$ such that~$V=e^{i\phir}$ and~$\phir$ is unique up to an additive constant in~$2\pi\mathbb{Z}$. Furthermore, for every~$\epsilon>0$ and~$\eta>0$,
\begin{equation}
\label{DEF_Gepsdelta}
E_{\epsilon,\eta}^\delta(V)
= \int_\Omega \left( \left\vert \nabla \phir \right\vert^2 -2 \delta \cdot \nabla \phir \right) \d x
+ \frac{1}{2\pi\epsilon} \int_{\partial \Omega} \sin^2(\phir-g) \ \d \mathcal{H}^1
=: \mathcal{G}_{\epsilon}^\delta(\phir),
\end{equation}
where~$g$ is a lifting of the unit tangent vector field~$\tau$ on~$\partial \Omega$, i.e. as complex numbers
\begin{equation}
\label{DEF_lifting}
e^{ig}=\tau=i\nu \ \ \text{ on } \partial \Omega,
\end{equation}
and~$g$ is continuous except at one point of~$\partial \Omega$.
\end{lemma}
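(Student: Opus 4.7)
The plan is to split the proof into two independent pieces: the existence and uniqueness of the lifting, and the pointwise identification of the four terms appearing in $E_{\epsilon,\eta}^\delta(V)$ in terms of $\varphi$.

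For the lifting itself, I would simply invoke Bethuel--Zheng~\cite{BethuelZheng}: since $\Omega$ is simply connected and $V\in H^1(\Omega,\mathbb{S}^1)$, there exists $\varphi\in H^1(\Omega,\R)$ with $V=e^{i\varphi}$. Uniqueness up to a constant in $2\pi\Z$ follows because any two liftings $\varphi_1,\varphi_2$ satisfy $\varphi_1-\varphi_2\in 2\pi\Z$ a.e., and an $H^1$ function with a.e.\ discrete values must be constant on the (connected) domain $\Omega$. For the boundary lifting $g$, I would note that on the closed curve $\partial\Omega$ the unit tangent $\tau=i\nu$ has topological degree $1$ as an $\mathbb{S}^1$-valued map, hence admits no continuous global lifting, but removing a single point turns $\partial\Omega$ into an interval and yields a continuous lifting $g$ with $e^{ig}=\tau$.

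For the energy identification, I would compute each of the four terms defining $E_{\epsilon,\eta}^\delta(V)$ separately. Since $V=(\cos\varphi,\sin\varphi)$, direct differentiation gives $\nabla V=(-\sin\varphi,\cos\varphi)\,\nabla\varphi$, so
\begin{equation*}
|\nabla V|^2=|\nabla \varphi|^2,\qquad \nabla V\wedge V=-\nabla\varphi,
\end{equation*}
the latter because $\partial_j V\wedge V=(\partial_j V_1)V_2-(\partial_j V_2)V_1=-(\sin^2\varphi+\cos^2\varphi)\partial_j\varphi=-\partial_j\varphi$. Hence $2\delta\cdot\nabla V\wedge V=-2\delta\cdot\nabla\varphi$. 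The potential term vanishes since $|V|=1$. For the boundary term, I would use $e^{ig}=\tau=i\nu$ to express $\nu=(\sin g,-\cos g)$, whence
\begin{equation*}
V\cdot\nu=\cos\varphi\sin g-\sin\varphi\cos g=\sin(g-\varphi),
\end{equation*}
and therefore $(V\cdot\nu)^2=\sin^2(\varphi-g)$ on $\partial\Omega$. Collecting these four identities produces exactly $\mathcal{G}_\epsilon^\delta(\varphi)$.

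There is no real obstacle in this proof: the Bethuel--Zheng lifting is cited, and the rest is routine trigonometric identities. The only point that requires some care is keeping track of the sign conventions for the wedge product $\wedge$ in $\R^2$ (here defined as $a\wedge b=a_1b_2-a_2b_1$) and for the relation $\tau=i\nu$, in order to land on the stated expression with signs $-2\delta\cdot\nabla\varphi$ and $\sin^2(\varphi-g)$ rather than their opposites. Finally, I would remark in passing that the identity $\mathcal{G}_\epsilon^\delta(\varphi)=\mathcal{G}_\epsilon^0(\varphi)-2\int_\Omega\delta\cdot\nabla\varphi\,\d x$ makes the DMI contribution visible as a linear-in-$\nabla\varphi$ perturbation, which will be the starting point for the analysis in the subsequent sections.
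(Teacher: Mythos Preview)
Your proposal is correct and follows essentially the same approach as the paper: invoke Bethuel--Zheng for the lifting, argue via the winding number of $\tau$ that $g$ exists with a single discontinuity, and then identify the energy termwise via the identities $|\nabla V|=|\nabla\varphi|$, $\nabla V\wedge V=-\nabla\varphi$, and $V\cdot\nu=-\sin(\varphi-g)$. The paper's proof is slightly terser (it simply states these three identities without the explicit trigonometric derivations you provide), but the content is identical.
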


\begin{proof}
Existence and uniqueness of a lifting~$\phir$ of~$\m$ in~$\Omega$ come from a well-known theorem of Bethuel and Zheng~\cite{BethuelZheng}.
For the existence of~$g$, we note that~$\tau$ has winding number~1 on~$\dr\Omega$ as~$\Omega$ is simply connected, hence~$\tau$ cannot be lifted continuously on~$\dr\Omega$. However, if~$\dr\Omega$ is~$C^{1,1}$, we can choose a lifting~$g$ to be locally Lipschitz except at one point of~$\dr\Omega$ where it jumps by~$2\pi$. Clearly, the curvature~$\kappa$ of~$\dr\Omega$ is given by the absolutely continuous part of the derivative of~$g$ (as a~BV function), i.e.~$\kappa=(\dr_\tau g)_{ac}$ and~$\int_{\dr\Omega} \kappa \ \d \mathcal{H}^1 = 2\pi$, which is the Gauss-Bonnet formula on~$\dr\Omega$. Therefore,~$g \in BV(\dr\Omega,\R)$ with~$\dr_\tau g=\kappa \mathcal{H}^1 \llcorner \dr\Omega -2\pi\mathbf{\delta}_p$ for some point~$p \in \dr\Omega$.
\linebreak
As~$\lv \nabla V \rv = \lv \nabla \phir \rv$,~$\nabla V \wedge V = -\nabla \phir$ in~$\Omega$, and~$\m \cdot \nu = -\sin(\phir-g)$ on~$\dr\Omega$, we deduce~\eqref{DEF_Gepsdelta}.
\end{proof}

The functional~$\mathcal{G}_\epsilon^\delta$ in the above lemma has been studied by Kurzke~\cite{Kurzke06},~\cite{Kurzke06-2} for~$\delta=0$. In the following, we will prove~$\Gamma$-convergence for~$\mathcal{G}_\epsilon^\delta$ and use these result for proving~$\Gamma$-convergence for~$E_{\epsilon,\eta}^\delta$.

\subsection{Gamma-convergence in terms of liftings}
\label{SUBSECTION_GC_liftings}

We now present the~$\Gamma$-convergence for the functional~$\mathcal{G}_\epsilon^\delta$ defined in~\eqref{DEF_Gepsdelta} in terms of the lifting~$\phir_\epsilon \colon \Omega \rightarrow \R$. These results will be useful to deduce similar statements for~$E_{\epsilon,\eta}^\delta(v_\epsilon)$ in the next section. The first statement establishes the~$L^p(\dr\Omega)$-compactness of~$\phir_\epsilon$ and a lower bound for~$\mathcal{G}_\epsilon^\delta$ at the first order.

\begin{theorem}
\label{THM_4.2.1_IK21+DMI}
Let~$\delta \in \R^2$,~$\Omega \subset \mathbb{R}^2$ be a bounded, simply connected and~$C^{1,1}$ domain and~$\kappa$ be the curvature of~$\partial \Omega$. Let~$(\phir_\epsilon)_{\epsilon \downarrow 0}$ be a family in~$H^1(\Omega)$ such that
\begin{equation}
\label{hyp_limsup_G_order1}
\limsup\limits_{\epsilon \rightarrow 0} \frac{1}{\left\vert \log \epsilon \right\vert} \mathcal{G}_{\epsilon}^\delta(\phir_\epsilon) <+\infty.
\end{equation}
There exists a family~$(z_\epsilon)_\epsilon$ of integers such that~$(\phir_\epsilon-\pi z_\epsilon)_\epsilon$ is bounded in~$L^p(\partial \Omega)$, for \linebreak every~$p \in [1,+\infty)$.
Moreover, for a subsequence,~$(\phir_\epsilon-\pi z_\epsilon)_\epsilon$ converges strongly in~$L^p(\partial \Omega)$ to a limit~$\phir_0$ such that~$\phir_0-g \in BV(\partial \Omega,\pi \mathbb{Z})$, with~$g$ given in~\eqref{DEF_lifting}, and
\begin{equation*}
\partial_\tau\phir_0 = \kappa \mathcal{H}^1\llcorner\partial\Omega-\pi\sum_{j=1}^N d_j\Dirac_{a_j} \ \ \text{ as measure on } \partial \Omega
\end{equation*}
where~$N \geqslant 1$,~$\lb a_j \in \partial \Omega \rb_{1 \leqslant j \leqslant N}$ are distinct points, $d_j \in \mathbb{Z} \setminus \left\lbrace 0 \right\rbrace$,~$\sum_{j=1}^N d_j=2$, and~$(\partial_\tau \phir_\epsilon)_\epsilon$ converges to~$\partial_\tau \phir_0$ in~$W^{-1,p}(\partial \Omega)$ for every~$p \in [1,+\infty)$.
Furthermore, we have the following first order lower bound for the energy:
\begin{equation}
\label{eq_liminf_G_order1}
\liminf\limits_{\epsilon \rightarrow 0} \frac{1}{\left\vert \log \epsilon \right\vert} \mathcal{G}_{\epsilon}^\delta(\phir_\epsilon) \geqslant \left\vert \partial_\tau\phir_0-\kappa \mathcal{H}^1 \llcorner \partial\Omega \right\vert (\partial \Omega) = \pi\sum\limits_{j=1}^N \left\vert d_j \right\vert.
\end{equation}
\end{theorem}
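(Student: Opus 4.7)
The strategy is a direct reduction to the case $\delta=0$, for which the analogous compactness and lower bound for $\mathcal{G}_\epsilon^0$ have already been established by Kurzke~\cite{Kurzke06} and by Ignat-Kurzke~\cite{IK21}. The DMI-like term $-2\int_\Omega \delta\cdot \nabla\phir \, \d x$ appearing in $\mathcal{G}_\epsilon^\delta$ is linear in $\nabla\phir$ and can therefore be absorbed by the exchange term at lower order. Concretely, by Young's inequality,
\begin{equation*}
\left| 2\int_\Omega \delta\cdot \nabla\phir_\epsilon \, \d x \right|
\leqslant \tfrac{1}{2}\int_\Omega |\nabla\phir_\epsilon|^2 \d x + 2|\delta|^2|\Omega|,
\end{equation*}
so that $\mathcal{G}_\epsilon^0(\phir_\epsilon) \leqslant 2\,\mathcal{G}_\epsilon^\delta(\phir_\epsilon) + 4|\delta|^2|\Omega|$. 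The assumption \eqref{hyp_limsup_G_order1} yields $\mathcal{G}_\epsilon^0(\phir_\epsilon)=O(|\log\epsilon|)$.

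\textbf{Compactness and BV structure.} Once the $\delta=0$ bound is at hand, I would invoke the result of Ignat-Kurzke~\cite{IK21} (reminiscent from Kurzke~\cite{Kurzke06}) corresponding exactly to the statement with $\delta=0$: it produces integers $z_\epsilon \in \mathbb Z$ such that $\phir_\epsilon-\pi z_\epsilon$ is bounded in $L^p(\partial\Omega)$, strongly compact up to subsequences in $L^p(\partial\Omega)$ for every $p\in[1,+\infty)$, with limit $\phir_0$ satisfying $\phir_0-g\in BV(\partial\Omega,\pi\Z)$ and $\partial_\tau\phir_0=\kappa\mathcal{H}^1\llcorner\partial\Omega-\pi\sum_{j=1}^N d_j \Dirac_{a_j}$, together with the convergence $\partial_\tau\phir_\epsilon \to \partial_\tau\phir_0$ in $W^{-1,p}(\partial\Omega)$. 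The constraint $\sum_{j=1}^N d_j=2$ comes from $\int_{\partial\Omega}\kappa\,\d\mathcal{H}^1=2\pi$ (Gauss-Bonnet). Since this is a statement about the sequence $(\phir_\epsilon)$ independently of which functional controls it, and since the compactness bound $\mathcal{G}_\epsilon^0(\phir_\epsilon)=O(|\log\epsilon|)$ is the only input needed, this step is immediate.

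\textbf{Transfer of the first-order lower bound.} To pass from the $\delta=0$ lower bound to the one stated in \eqref{eq_liminf_G_order1}, I would use Cauchy-Schwarz:
\begin{equation*}
\left| 2\int_\Omega \delta\cdot \nabla\phir_\epsilon \, \d x \right|
\leqslant 2|\delta|\,|\Omega|^{1/2}\left(\int_\Omega |\nabla\phir_\epsilon|^2 \d x\right)^{1/2}
\leqslant C\sqrt{\mathcal{G}_\epsilon^0(\phir_\epsilon)}=O\!\left(\sqrt{|\log\epsilon|}\right),
\end{equation*}
which is $o(|\log\epsilon|)$. Combined with the identity $\mathcal{G}_\epsilon^\delta(\phir_\epsilon)=\mathcal{G}_\epsilon^0(\phir_\epsilon)-2\int_\Omega \delta\cdot\nabla\phir_\epsilon\,\d x$, this gives $\frac{1}{|\log\epsilon|}\mathcal{G}_\epsilon^\delta(\phir_\epsilon)=\frac{1}{|\log\epsilon|}\mathcal{G}_\epsilon^0(\phir_\epsilon)+o(1)$, so the lower bound for $\mathcal{G}_\epsilon^0(\phir_\epsilon)$ (equal to $\pi\sum_{j=1}^N|d_j|$) transfers verbatim to $\mathcal{G}_\epsilon^\delta(\phir_\epsilon)$, and the identity $\pi\sum_j|d_j|=|\partial_\tau\phir_0-\kappa\mathcal{H}^1\llcorner\partial\Omega|(\partial\Omega)$ is immediate from the explicit form of $\partial_\tau\phir_0$.

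\textbf{Main obstacle.} All the genuinely hard analysis, namely the Modica-Mortola-type reduction on $\partial\Omega$ of $\frac{1}{2\pi\epsilon}\int_{\partial\Omega}\sin^2(\phir-g)\,\d\mathcal{H}^1$ combined with the $\int_\Omega |\nabla\phir|^2$ energy to locate the jump points $a_j$ and to extract $|\log\epsilon|$-concentration, is encapsulated in the cited results for $\delta=0$. The only issue specific to the DMI case is to check that $|\delta|$ is small enough for the comparison between $\mathcal{G}_\epsilon^\delta$ and $\mathcal{G}_\epsilon^0$ to lose only lower-order terms; since $\delta\in\R^2$ is fixed in this theorem, this is routine, but one should also remark (as in Remark~\ref{REM_LEM_for_THM_3.1_IK21+DMI}) that the argument extends to $|\delta|$ allowed to grow as slowly as $o(|\log\epsilon|^{1/2})$, which will be relevant later when the reduced $\delta$ arises from the $3$D DMI tensor in the regime~\eqref{DEF_regime3D}.
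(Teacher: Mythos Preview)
Your proposal is correct and follows essentially the same route as the paper: reduce to the case $\delta=0$ by showing $\mathcal{G}_\epsilon^0(\phir_\epsilon)=O(|\log\epsilon|)$ via Young's inequality, invoke \cite[Theorem~4.2]{IK21} for the compactness and structure of $\phir_0$, and then transfer the first-order lower bound. The only cosmetic difference is in the transfer step: the paper uses a parametrised Young inequality $\mathcal{G}_\epsilon^0(\phir_\epsilon)\leqslant(1+\sigma)\mathcal{G}_\epsilon^\delta(\phir_\epsilon)+C(\sigma)$ and lets $\sigma\to0$ after taking the $\liminf$, whereas you bound the DMI term directly by Cauchy--Schwarz as $O(\sqrt{|\log\epsilon|})=o(|\log\epsilon|)$; both arguments are equivalent and equally elementary.
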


\begin{proof}
Note that, for any~$\epsilon>0$ and any~$\sigma>0$, Young's inequality yields
\begin{align*}
\int_\Omega \left\vert \nabla \phir_\epsilon \right\vert^2 \d x
& = \int_\Omega \left\vert (\nabla \phir_\epsilon -\delta)+\delta \right\vert^2 \d x
 \leqslant (1+\sigma) \int_\Omega \left\vert \nabla \phir_\epsilon-\delta \right\vert^2 \d x
+ \left( 1+\frac{1}{\sigma} \right) \left\vert \delta \right\vert^2 \left\vert \Omega \right\vert,
\end{align*}
since~$(a+b)^2=a^2+b^2+2ab \leqslant a^2+b^2+\left( \sqrt{\sigma}a \right)^2+ \left( \frac{1}{\sqrt{\sigma}}b \right)^2$ for every~$a,b \in \R$. Hence,
\begin{align}
\label{est100}
\mathcal{G}_\epsilon^0(\phir_\epsilon)
& \leqslant (1+\sigma) \int_\Omega \lv \nabla \phir_\epsilon-\delta \rv^2 \d x
+ \frac{1}{2\pi\epsilon} \int_{\dr\Omega} \sin^2(\phir_\epsilon-g) \ \d \mathcal{H}^1
+ \left( 1+\frac{1}{\sigma} \right) \lv \delta \rv^2 \lv \Omega \rv
\nonumber
\\
& \leqslant (1+\sigma) \int_\Omega \left( \lv \nabla \phir_\epsilon \rv^2 -2\delta \cdot \nabla \phir_\epsilon \right) \d x
+ \frac{1+\sigma}{2\pi\epsilon} \int_{\dr\Omega} \sin^2(\phir_\epsilon-g) \ \d \mathcal{H}^1
+ \left( 2+\sigma+\frac{1}{\sigma} \right) \lv \delta \rv^2 \lv \Omega \rv
\nonumber
\\
& \leqslant (1+\sigma) \mathcal{G}_\epsilon^\delta(\phir_\epsilon)
+ \left( 2+\sigma+\frac{1}{\sigma} \right) \lv \delta \rv^2 \lv \Omega \rv.
\end{align}
By~\eqref{hyp_limsup_G_order1}, we deduce that~$\limsup\limits_{\epsilon \rightarrow 0} \frac{\mathcal{G}_\epsilon^0(\phir_\epsilon)}{\lv \log \epsilon \rv} <+\infty$. Hence, we can apply~\cite[Theorem~4.2]{IK21} and deduce the desired compactness results. Moreover, by~\eqref{est100} and~\cite[Theorem~4.2]{IK21}, we have
\begin{equation*}
\liminf\limits_{\epsilon \rightarrow 0} (1+\sigma) \frac{\mathcal{G}_\epsilon^\delta(\phir_\epsilon)}{\lv \log \epsilon \rv}
\geqslant \liminf\limits_{\epsilon \rightarrow 0} \frac{\mathcal{G}_\epsilon^0(\phir_\epsilon)}{\lv \log \epsilon \rv} \geqslant \pi \sum\limits_{j=1}^N \lv d_j \rv.
\end{equation*}
Letting $\sigma \rightarrow 0$, we get \eqref{eq_liminf_G_order1}.
\end{proof}

The following theorem gives the lower bound of~$\mathcal{G}_\epsilon^\delta$ of second order together with the multiplicities~$\pm 1$ of boundary vortices provided a more precise energy estimate than~\eqref{eq_liminf_G_order1}.

\begin{theorem}
\label{THM_4.2.2_IK21+DMI}
Let~$\delta \in \R^2$,~$\Omega \subset \mathbb{R}^2$ be a bounded, simply connected and~$C^{1,1}$ domain and~$\kappa$ be the curvature of~$\partial \Omega$. Let~$(\phir_\epsilon)_{\epsilon \downarrow 0}$ be a family in~$H^1(\Omega)$ satisfying the convergence in Theorem~\ref{THM_4.2.1_IK21+DMI} with the limit~$\phir_0$ on~$\partial \Omega$ as~$\epsilon \rightarrow 0$. Assume additionally that
\begin{equation}
\label{hyp_limsup_G_order2}
\limsup\limits_{\epsilon \rightarrow 0} \left( \mathcal{G}_{\epsilon}^\delta(\phir_\epsilon) -\pi \left\vert \log \epsilon \right\vert \sum_{j=1}^N \left\vert d_j \right\vert \right) <+\infty.
\end{equation}
Then~$d_j \in \left\lbrace \pm 1 \right\rbrace$ for every~$j \in \left\lbrace 1,...,N \right\rbrace$ and, for a subsequence,~$(\nabla \phir_\epsilon)_\epsilon$ converges weakly \linebreak in~$L^q(\Omega,\mathbb{R}^2)$ for any~$q \in [1,2)$ to~$\nabla \widehat{\phir}_0$, where~$\widehat{\phir}_0 \in W^{1,q}(\Omega)$ is an extension (not necessarily harmonic) of~$\phir_0$ to~$\Omega$.
Furthermore, we have the following second order lower bound of the energy:
\begin{equation}
\label{eq_liminf_G_order2}
\liminf\limits_{\epsilon \rightarrow 0} \left( \mathcal{G}_{\epsilon}^\delta(\phir_\epsilon)-N \pi \left\vert \log \epsilon \right\vert \right) \geqslant W_\Omega^\delta(\left\lbrace (a_j,d_j) \right\rbrace) + N\gamma_0,
\end{equation}
where  $\gamma_0=\pi \log\frac{e}{4\pi}$ and $W_\Omega^\delta(\lb(a_j,d_j)\rb)$ is the renormalised energy defined in~\eqref{DEF_renormalised_energy}.
\end{theorem}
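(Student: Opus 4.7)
My strategy is to reduce the statement to the case $\delta=0$ already handled in Ignat-Kurzke~\cite{IK21}, by peeling off the DMI contribution as a linear functional that converges to an explicit constant. Concretely, since $\delta \in \R^2$ is constant, the divergence theorem together with $\int_{\partial\Omega}\delta\cdot\nu \, \d\mathcal{H}^1 = 0$ yields
$$
\int_\Omega \delta\cdot\nabla\phir_\epsilon \, \d x = \int_{\partial\Omega}(\delta\cdot\nu)\,\phir_\epsilon\, \d\mathcal{H}^1 = \int_{\partial\Omega}(\delta\cdot\nu)(\phir_\epsilon - \pi z_\epsilon)\, \d\mathcal{H}^1.
$$
By Theorem~\ref{THM_4.2.1_IK21+DMI}, the right-hand side converges as $\epsilon \to 0$ to $L_\delta := \int_{\partial\Omega}(\delta\cdot\nu)\phir_0\, \d\mathcal{H}^1$. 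Combining this with~\eqref{hyp_limsup_G_order2} and the decomposition $\mathcal{G}_\epsilon^0(\phir_\epsilon) = \mathcal{G}_\epsilon^\delta(\phir_\epsilon) + 2\int_\Omega\delta\cdot\nabla\phir_\epsilon \, \d x$, I deduce
$$
\limsup_{\epsilon\to 0}\bigl(\mathcal{G}_\epsilon^0(\phir_\epsilon) - N\pi|\log\epsilon|\bigr) < +\infty.
$$

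I then invoke the $\delta=0$ version of the present theorem (proved in~\cite{IK21}) applied to $(\phir_\epsilon)_\epsilon$: this gives the integrality conclusion $d_j \in \{\pm 1\}$, the weak convergence of $(\nabla\phir_\epsilon)_\epsilon$ in $L^q(\Omega,\R^2)$ to $\nabla\widehat{\phir}_0$ (with $\widehat{\phir}_0$ a $W^{1,q}$ extension of $\phir_0$) for every $q\in[1,2)$, and the lower bound
$$
\liminf_{\epsilon\to 0}\bigl(\mathcal{G}_\epsilon^0(\phir_\epsilon) - N\pi|\log\epsilon|\bigr) \geq W_\Omega^0(\{(a_j,d_j)\}) + N\gamma_0.
$$

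Next, I would establish the identity $W_\Omega^\delta(\{(a_j,d_j)\}) = W_\Omega^0(\{(a_j,d_j)\}) - 2L_\delta$. Setting $\Omega_r := \Omega\setminus\bigcup_j B_r(a_j)$ and using the harmonicity of $\phir_\ast$, integration by parts expresses $\int_{\Omega_r}\delta\cdot\nabla\phir_\ast\, \d x$ as the boundary integral on $\partial\Omega\setminus\bigcup_j B_r(a_j)$ (which tends to $L_\delta$ by dominated convergence) plus boundary contributions on the small arcs $\Omega\cap\partial B_r(a_j)$. Since the jumps of $\phir_0$ at the $a_j$'s lie in $\pi\Z$, the harmonic extension $\phir_\ast$ stays bounded near each $a_j$ (its local profile is $\phir_0(a_j^-) + d_j\arg(z-a_j)$ up to a smooth correction), so the arc contributions are $O(r)$ and vanish as $r\to 0$. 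Substituting into definition~\eqref{DEF_renormalised_energy} yields the identity.

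Finally, since the sequence $-2\int_\Omega\delta\cdot\nabla\phir_\epsilon\, \d x$ actually converges (not merely is bounded), the $\liminf$ of a sum splits into the $\liminf$ plus the limit, and
$$
\liminf_{\epsilon\to 0}\bigl(\mathcal{G}_\epsilon^\delta(\phir_\epsilon) - N\pi|\log\epsilon|\bigr)
= \liminf_{\epsilon\to 0}\bigl(\mathcal{G}_\epsilon^0(\phir_\epsilon) - N\pi|\log\epsilon|\bigr) - 2L_\delta
\geq W_\Omega^0 + N\gamma_0 - 2L_\delta = W_\Omega^\delta + N\gamma_0,
$$
which is the desired bound~\eqref{eq_liminf_G_order2}. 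The main technical obstacle is the identity in the previous step: I must carefully verify that $\phir_\ast$ remains bounded near each boundary vortex $a_j$ so that the arc contributions vanish as $r \to 0$. This rests crucially on the $\pi\Z$-valued jumps of $\phir_0$, which enforce a bounded boundary-layer profile for the harmonic extension; without this, $\phir_\ast$ would blow up logarithmically at the $a_j$'s and the reduction argument would not close.
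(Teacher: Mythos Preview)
Your proposal is correct and takes a route that differs from the paper's in the second step. Both arguments begin identically: the divergence theorem together with $\int_{\partial\Omega}\delta\cdot\nu\,\d\mathcal{H}^1=0$ shows that $\int_\Omega\delta\cdot\nabla\phir_\epsilon\,\d x$ is bounded, so the $\delta=0$ hypothesis of~\cite[Theorem~4.2, part~2]{IK21} applies and yields $d_j\in\{\pm1\}$ and the weak $L^q$ compactness of $(\nabla\phir_\epsilon)_\epsilon$. For the lower bound~\eqref{eq_liminf_G_order2}, however, the paper does \emph{not} invoke the $\delta=0$ lower bound as a black box. Instead it replaces $\phir_\epsilon$ by its harmonic extension $\phir_\epsilon^\ast$ (same boundary trace, hence same DMI integral, and smaller Dirichlet energy), splits $\Omega$ into $\Omega^r=\Omega\setminus\bigcup_j B_r(a_j)$ and the cores, and treats each piece separately: weak lower semicontinuity of the Dirichlet part and weak $L^1$ convergence of the DMI on $\Omega^r$, a H\"older bound $O(r^{2/3})$ for the DMI on the cores, and the core estimate~\cite[Eq.~(85)]{IK21} for the remaining terms. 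Your argument is more modular: you observe that the DMI integral actually \emph{converges} to $L_\delta$, apply the full $\delta=0$ lower bound from~\cite{IK21} directly, and then prove the clean identity $W_\Omega^\delta=W_\Omega^0-2L_\delta$. This identity is in fact established later in the paper (Proposition~\ref{prop-expr-renenergy-psi-R}), where it is deduced more simply from $\nabla\phir_\ast\in L^q(\Omega)$ for $q<2$ and dominated convergence; that gives a quicker justification than your boundedness-of-$\phir_\ast$ argument via the local profile, though both are valid. Your route has the advantage of not re-running any local analysis with the DMI term present, while the paper's route keeps the argument self-contained and closer to the definition~\eqref{DEF_renormalised_energy} of $W_\Omega^\delta$.
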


\begin{proof}
We adapt the proof of~\cite[Theorem~4.2]{IK21} to the case of~$\delta \in \R^2$.
\vspace{.1cm}

\textit{Step 1:} We prove that~$d_j \in \lb \pm 1 \rb$ for every~$j \in \lb 1,...,N \rb$ and the weak convergence of~$(\nabla\phir_\epsilon)_{\epsilon \downarrow 0}$.
For that, note that by Theorem~\ref{THM_4.2.1_IK21+DMI}, there exists~$z_\epsilon \in \mathbb{Z}$ such that
\begin{equation*}
\int_\Omega \delta \cdot \nabla \phir_\epsilon \ \d x
= \int_\Omega \delta \cdot \nabla (\phir_\epsilon-\pi z_\epsilon) \ \d x
= \int_{\partial \Omega} (\phir_\epsilon - \pi z_\epsilon) \delta \cdot \nu \ \d \mathcal{H}^1
\leqslant \lv \delta \rv \lV \phir_\epsilon - \pi z_\epsilon \rV_{L^1(\dr\Omega)} \leqslant C.
\end{equation*}
As
\begin{equation*}
\mathcal{G}_\epsilon^\delta(\phir_\epsilon)=\mathcal{G}_\epsilon^0(\phir_\epsilon)-2\int_\Omega \delta \cdot \nabla\phir_\epsilon \ \d x \geqslant \mathcal{G}_\epsilon^0(\phir_\epsilon)-2C,
\end{equation*}
we deduce that~\eqref{hyp_limsup_G_order2} holds for~$\mathcal{G}_\epsilon^0(\phir_\epsilon)$ and by~\cite[Theorem~4.2, part~2]{IK21}, it yields the claim in Step~1. Moreover, by~\cite[Proof of Theorem~4.2]{IK21}, we have:
\begin{align*}
\int_{\Omega \setminus \bigcup_j B_r(a_j)} \lv \nabla \phir_\epsilon \rv^2 \d x \leqslant N\pi\log\frac{1}{r}+C,
\end{align*}
for every small~$r>0$ and~$\epsilon>0$.
\vspace{.1cm}

\textit{Step 2:} We prove the second order lower bound~\eqref{eq_liminf_G_order2}. To do so, we replace~$\phir_\epsilon$ by the harmonic extension of~$\phir_\epsilon \vert_{\partial \Omega}$ into~$\Omega$, denoted by~$\phir_\epsilon^\ast$. More precisely,~$\phir_\epsilon^\ast$ is the minimiser of the Dirichlet energy in~$\Omega$ under the Dirichlet boundary condition~$\phir_\epsilon \vert_{\partial \Omega}$. In particulier,~$\phir_\epsilon^\ast$ is harmonic in~$\Omega$. Since
\begin{equation*}
\int_\Omega \delta \cdot \nabla \phir_\epsilon \ \d x
= \int_{\partial \Omega} \phir_\epsilon \ \delta \cdot \nu \ \d \mathcal{H}^1
= \int_{\partial \Omega} \phir_\epsilon^\ast \ \delta \cdot \nu \ \d \mathcal{H}^1
= \int_\Omega \delta \cdot \nabla \phir_\epsilon^\ast \ \d x,
\end{equation*}
we deduce that~$\mathcal{G}_{\epsilon}^\delta(\phir_\epsilon) \geqslant \mathcal{G}_{\epsilon}^\delta(\phir_\epsilon^\ast)$, thus it suffices to prove~\eqref{eq_liminf_G_order2} for~$\phir_\epsilon^\ast$.
As~$\lb a_j,d_j \rb_{1 \leqslant j \leqslant N}$ are determined by~$\phir_\epsilon \vert_{\partial \Omega}$, the right-hand side in~\eqref{eq_liminf_G_order2} remains the same when replacing~$\phir_\epsilon$ by~$\phir_\epsilon^\ast$.
Using Step~1, we know that~$(\phir_\epsilon^\ast)$ converges weakly in~$W^{1,q}(\Omega)$, for every~$q \in [1,2)$, and weakly in~$H^1(\omega)$, for any open set~$\omega$ such that~$\overline{\omega} \subset \overline{\Omega} \setminus \left\lbrace a_1,...,a_N \right\rbrace$, to the harmonic extension~$\phir_\ast$ of~$\phir_0$ to~$\Omega$. Let~$r>0$ be small and~$\Omega^r:=\Omega \setminus \bigcup_{j=1}^N B_r(a_j)$. By weak lower semicontinuity of the Dirichlet integral, we have
\begin{equation*}
\int_{\Omega^r} \vert \nabla \phir_\ast \vert^2 \d x
\leqslant \liminf\limits_{\epsilon \rightarrow 0} \int_{\Omega^r} \vert \nabla \phir_\epsilon^\ast \vert^2 \d x.
\end{equation*}
Also, by weak convergence of~$(\nabla \phir_\epsilon^\ast)_\epsilon$ to~$\nabla \phir_\ast$ in~$L^1(\Omega,\mathbb{R}^2)$, we have
\begin{equation*}
\lim\limits_{\epsilon \rightarrow 0} \int_{\Omega^r} \delta \cdot \nabla \phir_\epsilon^\ast \ \d x
= \int_{\Omega^r} \delta \cdot \nabla \phir_\ast \ \d x.
\end{equation*}
By definition of~$W_\Omega^\delta$ in~\eqref{DEF_renormalised_energy}, we have
\begin{equation*}
\int_{\Omega^r} \left( \vert \nabla \phir_\ast \vert^2-2\delta \cdot \nabla\phir_\ast \right) \d x
\geqslant \pi N \log \frac{1}{r}+W_\Omega^\delta(\lb a_j,d_j \rb)+o_r(1), \text{ as } r \rightarrow 0.
\end{equation*}
In~$\Omega \setminus \Omega^r$, we have by H\"older's inequality:
\begin{equation*}
\int_{\Omega \setminus \Omega^r} \delta \cdot \nabla \phir_\epsilon^\ast \ \d x
= \sum_{j=1}^N \int_{\Omega \cap B_r(a_j)} \delta \cdot \nabla \phir_\epsilon^\ast \ \d x.
\leqslant C \vert \delta \vert r^{2/3} \left\Vert \nabla \phir_\epsilon^\ast \right\Vert_{L^{3/2}(\Omega)}
\leqslant C r^{2/3},
\end{equation*}
for some constant~$C>0$, because~$(\nabla \phir_\epsilon^\ast)_{\epsilon \downarrow 0}$ is bounded (independently of~$\epsilon$) in~$L^{3/2}(\Omega,\R^2)$. By~\cite[Equation~(85)]{IK21}, we have
\begin{equation*}
\begin{split}
\liminf\limits_{\epsilon \rightarrow 0}
\left[ \sum_{j=1}^N \left( \int_{\Omega \cap B_r(a_j)} \vert \nabla \phir_\epsilon^\ast \vert^2 \d x
+ \frac{1}{2\pi\epsilon} \int_{\partial \Omega \cap B_r(a_j)} \sin^2(\phir_\epsilon^\ast-g) \ \d \mathcal{H}^1
\right)
- N \pi \log \frac{r}{\epsilon} \right]
& 
\\
\geqslant -CNr^{1/2}+N\gamma_0.
& 
\end{split}
\end{equation*}
Therefore, we conclude
\begin{equation*}
\liminf\limits_{\epsilon \rightarrow 0} \left( \mathcal{G}_{\epsilon}^\delta(\phir_\epsilon^\ast) - N\pi \vert \log \epsilon \vert \right) \geqslant
W_\Omega^\delta(\left\lbrace (a_j,d_j) \right\rbrace)
-CNr^{1/2}
+ N\gamma_0
- O(r^{2/3}).
\end{equation*}
Taking the limit as~$r \rightarrow 0$, we get~\eqref{eq_liminf_G_order2} for~$\phir_\epsilon^\ast$.
\end{proof}

Now we prove the upper bound for the~$\Gamma$-convergence of~$\mathcal{G}_\epsilon^\delta$ at the first order, respectively at the second order.

\begin{theorem}
\label{THM_4.2.3_IK21+DMI}
Let~$\delta \in \R^2$,~$\Omega \subset \mathbb{R}^2$ be a bounded, simply connected~$C^{1,1}$ domain and~$\kappa$ be the curvature of~$\partial \Omega$. Let~$\phir_0 \colon \partial \Omega \rightarrow \mathbb{R}$ be such that
\begin{equation*}
\partial_\tau \phir_0 = \kappa \mathcal{H}^1 \llcorner \partial\Omega - \pi \sum\limits_{j=1}^N d_j\Dirac_{a_j} \ \ \text{ as measure on } \partial\Omega,
\end{equation*}
where~$d_j \in \mathbb{Z} \setminus \left\lbrace 0 \right\rbrace$ for every~$j \in \left\lbrace 1,...,N \right\rbrace$,~$\sum_{j=1}^N d_j=2$ and~$e^{i\phir_0} \cdot \nu = 0$ in~$\partial \Omega \setminus \left\lbrace a_1,...,a_N \right\rbrace$ for~$N \geqslant 1$ distinct points~$a_1,...,a_N \in \partial \Omega$.
There exists a family~$(\phir_\epsilon)_\epsilon$ in~$H^1(\Omega)$ such that~$(\phir_\epsilon)_\epsilon$ converges to~$\phir_0$ in~$L^p(\partial \Omega)$ and to~$\phir_\ast$ in~$L^p(\Omega)$, for every~$p \in [1,+\infty)$, where~$\phir_\ast$ is the harmonic extension of~$\phir_0$ to~$\Omega$, and we have
\begin{equation}
\label{eq_limsup_G_order1}
\lim\limits_{\epsilon \rightarrow 0} \frac{1}{\left\vert \log \epsilon \right\vert} \mathcal{G}_{\epsilon}^\delta(\phir_\epsilon)
\leqslant \pi \sum\limits_{j=1}^N \left\vert d_j \right\vert.
\end{equation}
Furthermore, if~$d_j \in \left\lbrace \pm 1 \right\rbrace$ for every~$j \in \left\lbrace 1,...,N \right\rbrace$, then
\begin{equation}
\label{eq_limsup_G_order2}
\lim\limits_{\epsilon \rightarrow 0} \left( \mathcal{G}_{\epsilon}^\delta(\phir_\epsilon)-N\pi \left\vert \log \epsilon \right\vert \right)
\leqslant W_\Omega^\delta(\left\lbrace (a_j,d_j) \right\rbrace)+N\gamma_0,
\end{equation}
where~$\gamma_0=\pi \log\frac{e}{4\pi}$ and~$W_\Omega^\delta(\lb(a_j,d_j)\rb)$ is the renormalised energy defined in~\eqref{DEF_renormalised_energy}.
\end{theorem}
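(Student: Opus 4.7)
The strategy is to reuse the recovery sequence already constructed for the case $\delta=0$ in Ignat--Kurzke~\cite[Theorem~4.2]{IK21} and to treat the DMI contribution as a linear perturbation. Specifically, I would take $(\phir_\epsilon)_\epsilon \subset H^1(\Omega)$ to be that recovery sequence, which satisfies $\phir_\epsilon \to \phir_0$ in $L^p(\partial\Omega)$ and $\phir_\epsilon \to \phir_\ast$ in $L^p(\Omega)$ for every $p \in [1,+\infty)$, together with
\begin{equation*}
\limsup_{\epsilon\to 0}\frac{\mathcal{G}_\epsilon^0(\phir_\epsilon)}{\lv\log\epsilon\rv} \leqslant \pi\sum_{j=1}^N \lv d_j\rv,
\end{equation*}
and, when $d_j \in \{\pm 1\}$, the sharper bound
\begin{equation*}
\limsup_{\epsilon\to 0}\bigl(\mathcal{G}_\epsilon^0(\phir_\epsilon)-N\pi\lv\log\epsilon\rv\bigr) \leqslant W_\Omega^0(\{(a_j,d_j)\}) + N\gamma_0.
\end{equation*}
Since $\mathcal{G}_\epsilon^\delta(\phir_\epsilon) = \mathcal{G}_\epsilon^0(\phir_\epsilon) - 2\int_\Omega\delta\cdot\nabla\phir_\epsilon\,\d x$, everything will reduce to identifying the limit of the DMI integral.

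The key observation is that $\delta \in \R^2$ is a \emph{constant} vector, so for every $\phi \in W^{1,1}(\Omega)$ the divergence theorem gives
\begin{equation*}
\int_\Omega \delta\cdot\nabla\phi\,\d x = \int_{\partial\Omega}\phi\,(\delta\cdot\nu)\,\d\mathcal{H}^1.
\end{equation*}
Applying this identity to $\phir_\epsilon$ and to the harmonic extension $\phir_\ast$ (which lies in $W^{1,q}(\Omega)$ for every $q \in [1,2)$, since $\phir_0$ is a $BV$ function on $\partial\Omega$) and invoking the $L^1(\partial\Omega)$ trace convergence $\phir_\epsilon \to \phir_0$, I obtain
\begin{equation*}
\lim_{\epsilon\to 0}\int_\Omega \delta\cdot\nabla\phir_\epsilon\,\d x = \int_{\partial\Omega}\phir_0\,(\delta\cdot\nu)\,\d\mathcal{H}^1 = \int_\Omega \delta\cdot\nabla\phir_\ast\,\d x.
\end{equation*}
In particular, the DMI term is $O(1)$, so it vanishes after dividing by $\lv\log\epsilon\rv$, giving~\eqref{eq_limsup_G_order1} at once.

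For the second-order estimate~\eqref{eq_limsup_G_order2}, combining the IK21 upper bound for $\mathcal{G}_\epsilon^0$ with the previous limit yields
\begin{equation*}
\limsup_{\epsilon\to 0}\bigl(\mathcal{G}_\epsilon^\delta(\phir_\epsilon)-N\pi\lv\log\epsilon\rv\bigr) \leqslant W_\Omega^0(\{(a_j,d_j)\}) + N\gamma_0 - 2\int_\Omega \delta\cdot\nabla\phir_\ast\,\d x,
\end{equation*}
and it remains to identify the right-hand side with $W_\Omega^\delta(\{(a_j,d_j)\}) + N\gamma_0$. Comparing the two definitions~\eqref{DEF_renormalised_energy} for parameters $\delta$ and $0$, this amounts to checking that $\int_{\Omega\setminus\bigcup_j B_r(a_j)}\delta\cdot\nabla\phir_\ast\,\d x \to \int_\Omega \delta\cdot\nabla\phir_\ast\,\d x$ as $r \to 0$, which follows by dominated convergence from $\nabla\phir_\ast \in L^q(\Omega)$ for $q<2$; this gives $W_\Omega^\delta = W_\Omega^0 - 2\int_\Omega \delta\cdot\nabla\phir_\ast\,\d x$, closing the argument. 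The only technical subtlety is the passage to the limit in $\int_\Omega \delta\cdot\nabla\phir_\epsilon\,\d x$: because $\nabla\phir_\epsilon$ concentrates near the boundary vortices, no strong $L^1(\Omega)$ gradient convergence is available, and one must exploit the constancy of $\delta$ to reduce the integral to a trace term on $\partial\Omega$, where the IK21 compactness $\phir_\epsilon \to \phir_0$ in $L^p(\partial\Omega)$ is precisely what is needed.
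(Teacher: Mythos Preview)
Your proposal is correct and follows essentially the same approach as the paper: both take the recovery sequence $(\widehat\phir_\epsilon)$ from \cite[Theorem~4.2, part~3]{IK21} and treat the DMI term as a linear perturbation, converting $\int_\Omega \delta\cdot\nabla\widehat\phir_\epsilon\,\d x$ into the boundary integral $\int_{\partial\Omega}\widehat\phir_\epsilon\,(\delta\cdot\nu)\,\d\mathcal H^1$ via the divergence theorem; your first-order argument is identical to the paper's Case~1. For the second-order bound the paper instead splits $\int_\Omega \delta\cdot\nabla\widehat\phir_\epsilon$ into contributions near and away from the vortices, using the $L^{3/2}$-boundedness of $\nabla\widehat\phir_\epsilon$ and its weak $L^1$-convergence to $\nabla\phir_\ast$ on $\Omega\setminus\bigcup_j B_r(a_j)$, whereas you pass to the limit directly through the boundary trace and then invoke the identity $W_\Omega^\delta=W_\Omega^0-2\int_\Omega\delta\cdot\nabla\phir_\ast\,\d x$ (which the paper records separately in Proposition~\ref{prop-expr-renenergy-psi-R}); your route is slightly more direct but relies on the same ingredients.
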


\begin{proof}
Let~$\phir_\ast$ be the harmonic extension of~$\phir_0$ to~$\Omega$, that satisfies~\eqref{DEF_renormalised_energy}.
We consider the family~$(\widehat{\phir}_\epsilon)$ constructed in~\cite[Theorem~4.2, part~3]{IK21} that satisfies the required convergence to~$\phir_\ast$ and estimates~\eqref{eq_limsup_G_order1} and~\eqref{eq_limsup_G_order2} for~$\delta=0$. We will show that this family~$(\widehat{\phir}_\epsilon)$ satisfies~\eqref{eq_limsup_G_order1} and~\eqref{eq_limsup_G_order2} for general~$\delta \in \R^2$.

\textit{Case~1:} $d_j \in \mathbb{Z}$ are not necessarily equal to~$\pm 1$. Then for~$\delta \in \R^2$, recall that
\begin{equation*}
\mathcal{G}_{\epsilon}^\delta(\widehat{\phir}_\epsilon)
= \mathcal{G}_{\epsilon}^0(\widehat{\phir}_\epsilon) -2 \int_\Omega \delta \cdot \nabla \widehat{\phir}_\epsilon \ \d x
= \mathcal{G}_\epsilon^0(\widehat{\phir}_\epsilon)
+2\int_{\partial\Omega} \widehat{\phir}_\epsilon \ \delta \cdot \nu \ \d \mathcal{H}^1.
\end{equation*}
As~$\widehat{\phir}_\epsilon \rightarrow \phir_\ast$ in~$L^1(\partial \Omega)$, we deduce that~$\lv \int_{\partial\Omega} \widehat{\phir}_\epsilon \ \delta \cdot \nu \ \d \mathcal{H}^1 \rv \leqslant C$ as~$\epsilon \downarrow 0$, which yields~\eqref{eq_limsup_G_order1} for general~$\delta \in \R^2$.

\textit{Case~2:} $\lv d_j \rv =1$ for every~$j \in \lb 1,...,N \rb$. By the definition of~$W_\Omega^0(\lb(a_j,d_j)\rb)$ and the construction in~\cite[Theorem~4.2, part~3]{IK21} (satisfying~\eqref{eq_limsup_G_order2} above for~$\delta=0$), we have:
\begin{equation*}
\mathcal{G}_\epsilon^0(\widehat{\phir}_\epsilon)
\leqslant N\pi\lv \log \epsilon \rv
+\left( \int_{\Omega \setminus \bigcup_{j=1}^N B_r(a_j)}  \left\vert \nabla \phir_\ast \right\vert^2 \d x -N\pi\log \frac{1}{r} \right)
+N\gamma_0 +o_\epsilon(1)+o_r(1).
\end{equation*}
Moreover, we have
\begin{equation*}
\int_\Omega \delta \cdot \nabla \widehat{\phir}_\epsilon \ \d x
= \int_{\Omega \setminus \bigcup_j B_r(a_j)} \delta \cdot \nabla \widehat{\phir}_\epsilon \ \d x
+ \int_{\Omega \cap \bigcup_j B_r(a_j)} \delta \cdot \nabla \widehat{\phir}_\epsilon \ \d x.
\end{equation*}
Let~$r>0$ be sufficiently small. Then by H\"older's inequality,
\begin{equation*}
\int_{\Omega \cap \bigcup_j B_r(a_j)} \delta \cdot \nabla \widehat{\phir}_\epsilon \ \d x
= \sum_{j=1}^N \int_{\Omega \cap B_r(a_j)} \delta \cdot \nabla \widehat{\phir}_\epsilon \ \d x
\leqslant C \vert\delta\vert r^{2/3} \lV \nabla \widehat{\phir}_\epsilon \rV_{L^{3/2}(\Omega)} = O(r^{2/3})
\end{equation*}
because~$(\nabla\widehat{\phir}_\epsilon)_\epsilon$ is bounded in~$L^{3/2}(\Omega)$ by construction (see~\cite[Theorem~4.2, part~2]{IK21}).
Moreover, by~\cite[Theorem~4.2, part~2]{IK21} again,~$(\nabla \widehat{\phir}_\epsilon)$ converges weakly to~$\nabla \phir_\ast$ in~$L^1(\Omega,\R^2)$, thus
\begin{equation*}
\lim\limits_{\epsilon \rightarrow 0} \int_{\Omega \setminus \bigcup_j B_r(a_j)} \delta \cdot \nabla \widehat{\phir}_\epsilon \ \d x
= \int_{\Omega \setminus \bigcup_j B_r(a_j)} \delta \cdot \nabla \phir_\ast \ \d x
\end{equation*}
Combining all above, and taking the~$\limsup$ as~$\epsilon \rightarrow 0$, we deduce
\begin{align*}
\limsup\limits_{\epsilon \rightarrow 0} \left( \mathcal{G}_\epsilon^\delta(\widehat{\psi}_\epsilon)
- N\pi\lv \log \epsilon \rv \right)
& \leqslant \left( \int_{\Omega \setminus \bigcup_{j=1}^N B_r(a_j)} \left( \left\vert \nabla \phir_\ast \right\vert^2 - 2 \delta \cdot \nabla \phir_\ast \right) \d x -N\pi\log \frac{1}{r} \right)
\\
& \quad + N\gamma_0 + o_r(1).
\end{align*}
Taking the~$\liminf$ as~$r \rightarrow 0$, we get the desired upper bound.
\end{proof}

\subsection{Gamma-convergence for vector-valued maps}
\label{subsection_gc_vector_valued_maps}

We now prove Theorems~\ref{THM_1.2_IK21+DMI},~\ref{THM_1.4_IK21+DMI} and~\ref{THM_1.5_IK21+DMI}.

\begin{proof}[Proof of Theorem~\ref{THM_1.2_IK21+DMI}]
Let~$(v_\epsilon)$ be a family in~$H^1(\Omega,\mathbb{R}^2)$ such that~$E_{\epsilon,\eta}^\delta(v_\epsilon) \leqslant C \left\vert \log \epsilon \right\vert$, for some constant~$C>0$.
Using Theorem~\ref{THM_3.1_IK21+DMI} and~Lemma~\ref{LEM_for_THM_3.1_IK21+DMI}, we can construct a family~$(\m_\epsilon)$ in~$H^1(\Omega,\mathbb{S}^1)$ such that in the regime~\eqref{DEF_regime2D}:
\begin{equation}
\label{eqn1thm1.2}
E_{\epsilon,\eta}^\delta(\m_\epsilon) \leqslant E_{\epsilon,\eta}^\delta(v_\epsilon) + o_\epsilon(1),
\end{equation}
\begin{equation}
\label{eqn3thm1.2}
\lim\limits_{\epsilon \rightarrow 0}
\left\Vert \m_\epsilon - v_\epsilon \right\Vert_{L^p(\partial\Omega)} = 0,
\ \ \text{ for every } p \in [1,+\infty),
\end{equation}
and
\begin{equation}
\label{eqn4thm1.2}
\lim\limits_{\epsilon \rightarrow 0}
\left\Vert \mathcal{J}(\m_\epsilon) - \mathcal{J}(v_\epsilon) \right\Vert_{(\text{Lip}(\Omega))^\ast} = 0.
\end{equation}
Using Lemma~\ref{LEM_4.1_IK21+DMI}, for every~$\epsilon>0$, there exists a lifting~$\phir_\epsilon \in H^1(\Omega)$ such that~$\m_\epsilon = e^{i\phir_\epsilon}$ and~$E_{\epsilon,\eta}^\delta(\m_\epsilon)=\mathcal{G}_{\epsilon}^\delta(\phir_\epsilon)$. Moreover, for every~$\epsilon>0$, the global Jacobian of~ $\m_\epsilon$ is given by
\begin{equation*}
\mathcal{J}(\m_\epsilon)=\mathcal{J}_{\text{bd}}(\m_\epsilon)=-\partial_\tau\phir_\epsilon
\text{ as a distribution in } H^{-1/2}(\partial\Omega).
\end{equation*}
Using~\eqref{eqn1thm1.2} and~$\mathcal{G}_{\epsilon}^\delta(\phir_\epsilon) = E_{\epsilon,\eta}^\delta(\m_\epsilon)$, we deduce from Theorem~\ref{THM_4.2.1_IK21+DMI} that, for a subsequence, there exists a family of integers~$(z_\epsilon)$ -- that we can all assume to be either even or odd, up to take a further subsequence -- such that~$(\phir_\epsilon - \pi z_\epsilon)$ converges strongly in~$L^p(\partial\Omega)$, for every~$p \in [1,+\infty)$, to a limit~$\phid_0$ that satisfies $\phid_0-g \in BV(\partial\Omega,\pi \mathbb{Z})$ with~$g$ such that~$e^{ig}=\tau=i\nu$ on~$\partial \Omega$. Let~$\phir_0$ be such that~$\phir_0=\phid_0$ if the integers~$z_\epsilon$ are all even, and~$\phir_0=\phid_0-\pi$ if the integers~$z_\epsilon$ are all odd. By definition of~$\phid_0$ and~$g$,~$\phir_0$ is a~BV lifting of~$\pm\tau$.
Moreover, as~$\left\vert e^{is}-e^{it} \right\vert \leqslant \left\vert s-t \right\vert$ for every~$s,t \in \mathbb{R}$, we have, for every~$\epsilon>0$,
\begin{align*}
\left\vert \m_\epsilon-e^{i\phir_0} \right\vert
= \left\vert e^{i(\phir_\epsilon-\pi z_\epsilon)} - e^{i\phid_0} \right\vert
\leqslant \left\vert (\phir_\epsilon-\pi z_\epsilon)-\phid_0 \right\vert.
\end{align*}
It follows that~$(V_\epsilon)_{\epsilon \downarrow 0}$ converges strongly to~$e^{i\phir_0}$ in~$L^p(\partial\Omega)$, for every~$p \in [1,+\infty)$. Combining this with~\eqref{eqn3thm1.2}, we deduce that~$(v_\epsilon)$ converges strongly to~$e^{i\phir_0}$ in~$L^p(\partial\Omega)$, for every~$p \in [1,+\infty)$.
By Theorem~\ref{THM_4.2.1_IK21+DMI}, we also have
\begin{equation*}
\partial_\tau \phir_0 = \partial_\tau \phid_0 = \kappa \mathcal{H}^1\llcorner\partial\Omega - \pi \sum_{j=1}^N d_j \Dirac_{a_j}
=-J \ \ \text{ as measure on } \partial \Omega
\end{equation*}
where, for every~$j \in \left\lbrace 1,...,N \right\rbrace$,~$a_j \in \partial \Omega$ are distinct points,~$d_j \in \mathbb{Z} \setminus \left\lbrace 0 \right\rbrace$ and~$\sum_{j=1}^N d_j = 2$.
We also get, again by Theorem~\ref{THM_4.2.1_IK21+DMI}, the convergence of~$(\partial_\tau\phir_\epsilon)$ to~$\partial_\tau\phid_0$ in~$W^{-1,p}(\partial\Omega)$ for every~$p \in (1,+\infty)$. For every Lipschitz function~$\zeta \in W^{1,\infty}(\Omega)$, using that~$\m_\epsilon=e^{i\phir_\epsilon}$ and integrating by parts, we have
\begin{align*}
\langle \mathcal{J}(\m_\epsilon),\zeta \rangle
&= -\int_\Omega \m_\epsilon \wedge \nabla\m_\epsilon \cdot \nabla^\perp\zeta \ \d x
 = -\int_\Omega \nabla\phir_\epsilon \cdot \nabla^\perp\zeta \ \d x
\\
& = -\langle \partial_\tau\phir_\epsilon,\zeta \rangle_{H^{-1/2}(\partial\Omega), H^{1/2}(\partial\Omega)}
 = -\langle \partial_\tau\phir_\epsilon,\zeta \rangle_{W^{-1,2}(\partial\Omega), W^{1,2}(\partial \Omega)}.
\end{align*}
Thus,
\begin{align*}
\sup\limits_{\lv \nabla \zeta \rv \leqslant 1} \lv \langle \mathcal{J}(v_\epsilon)-J,\zeta \rangle \rv
\leqslant C \lV \partial_\tau\phir_\epsilon-\partial_\tau\phir_0 \rV_{W^{-1,2}(\partial\Omega)} \rightarrow 0 \text{ as } \epsilon \rightarrow 0.
\end{align*}
Combining this with~\eqref{eqn4thm1.2}, we deduce that~$(\mathcal{J}(v_\epsilon))$ converges to~$J$ in~$(\mathrm{Lip}(\Omega))^\ast$.
Finally, since
\begin{equation*}
E_{\epsilon,\eta}^\delta(v_\epsilon)
\geqslant E_{\epsilon,\eta}^\delta(\m_\epsilon)-o_\epsilon(1)
= \mathcal{G}_{\epsilon}^\delta(\phir_\epsilon)-o_\epsilon(1)
\end{equation*}
thanks to~\eqref{eqn1thm1.2}, the lower bound~\eqref{eq_liminf_G_order1} for $\mathcal{G}_{\epsilon}^\delta(\phir_\epsilon)$ given by Theorem~\ref{THM_4.2.1_IK21+DMI} gives the expected lower bound for~$E_{\epsilon,\eta}^\delta(v_\epsilon)$ at the first order.
\end{proof}

We will now prove Theorem~\ref{THM_1.4_IK21+DMI}. Within those assumptions, we additionally prove the following statements.

\begin{itemize}
\item[(a)] \textbf{Penalty bound.}
\\
The penalty terms in the energy are of order~$O(1)$, i.e.
\begin{equation}
\label{limsup_penalty_terms}
\limsup\limits_{\epsilon \rightarrow 0} \left(
\frac{1}{\eta^2} \int_\Omega \left( 1-\vert v_\epsilon \vert^2 \right)^2 \d x
+ \frac{1}{2\pi\epsilon} \int_{\partial \Omega} (v_\epsilon \cdot \nu)^2 \d \mathcal{H}^1
\right) < +\infty.
\end{equation}
\item[(b)] \textbf{Lower bound for the energy near boundary vortex cores.}
\\
There exist~$r_0>0$,~$\epsilon_0>0$ and~$C>0$ such that the Dirichlet energy of~$v_\epsilon$ near the singularities~$\left\lbrace a_j \right\rbrace_{j \in \left\lbrace 1,...,N \right\rbrace}$ satisfies, for all~$\epsilon \in (0,\epsilon_0)$ and~$r \in (0,r_0)$,
\begin{align}
\label{lower_bound_near_vortices}
\int_{\Omega \cap \bigcup_j B_r(a_j)} \left\vert \nabla v_\epsilon \right\vert^2 \d x
- N \pi \log \frac{r}{\epsilon}
\geqslant - C.
\end{align}
\item[(c)] \textbf{DMI bound.}
\\
The Dzyaloshinskii-Moriya interaction energy is of order~$O(1)$, i.e.
\begin{align}
\label{limsup_DMI_term}
\limsup\limits_{\epsilon \rightarrow 0} \int_\Omega \left\vert \delta \cdot \nabla v_\epsilon \wedge v_\epsilon \right\vert \d x < +\infty.
\end{align}
\end{itemize}

\begin{proof}[Proof of Theorem~\ref{THM_1.4_IK21+DMI}]
Continuing as in the proof of Theorem~\ref{THM_1.2_IK21+DMI} (with the same notations), we now assume the stronger condition~\eqref{EQ_limsup_order2_2D}. By definition of~$\m_\epsilon$, Theorem~\ref{THM_3.1_IK21+DMI} and Lemma~\ref{LEM_for_THM_3.1_IK21+DMI},
\begin{equation*}
\mathcal{G}_{\epsilon}^\delta(\phir_\epsilon)
= E_{\epsilon,\eta}^\delta(\m_\epsilon)
\leqslant E_{\epsilon,\eta}^\delta(v_\epsilon) + o_\epsilon(1),
\end{equation*}
hence by~\eqref{EQ_limsup_order2_2D},
\begin{equation}
\label{hyp_limsup_order2_Gepsilondelta}
\limsup\limits_{\epsilon \rightarrow 0} \left( \mathcal{G}_{\epsilon}^\delta(\phir_\epsilon) - \pi \left\vert \log \epsilon \right\vert \sum_{j=1}^N \left\vert d_j \right\vert \right) < +\infty.
\end{equation}

\textit{Step 1: Proof of~\textit{(i)}.}
We deduce immediately from the above assumption and Theorem~\ref{THM_4.2.2_IK21+DMI} that~\eqref{hyp_limsup_G_order2} holds true and~$d_j \in \left\lbrace \pm 1 \right\rbrace$ for every~$j \in \left\lbrace 1,...,N \right\rbrace$. Since $E_{\epsilon,\eta}^\delta(v_\epsilon) \geqslant \mathcal{G}_{\epsilon}^\delta(\phir_\epsilon) - o_\epsilon(1)$, it follows that the desired lower bound for~$E_{\epsilon,\eta}^\delta(v_\epsilon)$ holds true.
\vspace{.1cm}

\textit{Step 2: Estimating the DMI term in~\eqref{limsup_DMI_term}.}
By Theorem~\ref{THM_3.1_IK21+DMI} and Lemma~\ref{LEM_for_THM_3.1_IK21+DMI}, we have
\begin{equation}
\begin{split}
\left\vert \int_\Omega \delta \cdot \nabla v_\epsilon \wedge v_\epsilon \ \d x
\right\vert
& \leqslant
\left\vert \int_\Omega \delta \cdot \nabla \m_\epsilon \wedge \m_\epsilon \ \d x
\right\vert
+ \left\vert \int_\Omega \delta \cdot \left( \nabla \m_\epsilon \wedge \m_\epsilon - \nabla v_\epsilon \wedge v_\epsilon \right) \d x
\right\vert
\\
& \leqslant \left\vert \int_\Omega \delta \cdot \nabla \m_\epsilon \wedge \m_\epsilon \ \d x
\right\vert
+\vert \delta \vert \lV \mathcal{J}(V_\epsilon)-\mathcal{J}(v_\epsilon) \rV_{(\mathrm{Lip}(\Omega))^\ast}
\\
& \leqslant
\left\vert \int_\Omega \delta \cdot \nabla \phir_\epsilon \ \d x
\right\vert
+ o_\epsilon(1).
\end{split}
\label{eqn1thm1.4}
\end{equation}
By Theorem~\ref{THM_4.2.2_IK21+DMI},~$(\nabla \phir_\epsilon)$ is bounded in~$L^1(\Omega)$ and we conclude with~\eqref{limsup_DMI_term}.
\vspace{.1cm}

\textit{Step 3: Compactness and proofs of~\eqref{limsup_penalty_terms} and~\eqref{lower_bound_near_vortices}.}
By~\eqref{limsup_DMI_term}, one checks that
\begin{equation*}
E_{\epsilon,\eta}^\delta(v_\epsilon)
= E_{\epsilon,\eta}^0(v_\epsilon)+O_\epsilon(1).
\end{equation*}
In particular,~$E_{\epsilon,\eta}^0(v_\epsilon)$ satisfies~\eqref{EQ_limsup_order2_2D} for~$\delta=0$. Therefore, we apply Theorem~1.4 in~\cite{IK21} and conclude this step.
\end{proof}

\begin{proof}[Proof of Theorem~\ref{THM_1.5_IK21+DMI}]
Let~$\phir_0 \colon \partial \Omega \rightarrow \mathbb{R}$ be such that~$\partial_\tau \phir_0 = \kappa \mathcal{H}^1 \llcorner \partial \Omega - \pi \sum_{j=1}^N d_j\Dirac_{a_j}$ \linebreak and~$e^{i\phir_0} \cdot \nu = 0$ in~$\partial \Omega \setminus \left\lbrace a_1,...,a_N \right\rbrace$. Let~$\phir_\ast$ be the harmonic extension of~$\phir_0$ to~$\Omega$.
For any~$\epsilon>0$, we consider~$(\phir_\epsilon)_{\epsilon \downarrow 0}$ as in Theorem~\ref{THM_4.2.3_IK21+DMI} and we set~$v_\epsilon=e^{i \phir_\epsilon}$. Then, for every~$\epsilon>0$,~$v_\epsilon \in H^1(\Omega,\mathbb{S}^1)$ and~$\mathcal{J}(v_\epsilon) = -\partial_\tau \phir_\epsilon$ as measure on~$\partial\Omega$.
Since, for every~$\epsilon>0$,
\begin{equation*}
\vert v_\epsilon - e^{i\phir_\ast} \vert
= \vert e^{i\phir_\epsilon}-e^{i\phir_\ast} \vert
\leqslant \vert \phir_\epsilon-\phir_\ast \vert,
\end{equation*}
it follows from Theorem~\ref{THM_4.2.3_IK21+DMI} that~$(v_\epsilon)_{\epsilon \downarrow 0}$ converges strongly to~$\phir_\ast$ in~$L^p(\Omega)$ and in~$L^p(\partial\Omega)$ for every~$p \in [1,+\infty)$, and that~$(\mathcal{J}(v_\epsilon))_{\epsilon \downarrow 0}$ converges to
\begin{equation*}
-\partial_\tau\phir_0 = -\kappa \mathcal{H}^1 \llcorner \partial\Omega + \pi \sum_{j=1}^N d_j \Dirac_{a_j}
\end{equation*}
in~$(\mathrm{Lip}(\Omega))^\ast$. Finally, the expected upper bounds at first and at second order for~$E_{\epsilon,\eta}^\delta(v_\epsilon)$ follow from~\eqref{eq_limsup_G_order1} for the first order, from~\eqref{eq_limsup_G_order2} for the second order, combined with the equality~$E_{\epsilon,\eta}^\delta(v_\epsilon)~=~\mathcal{G}_{\epsilon}^\delta(\phir_\epsilon)$.
\end{proof}

\subsection{Minimisers of the renormalised energy}
\label{subsection_minimisers_renormalised_energy}

For proving Corollary~\ref{COR_1.6_IK21+DMI}, we first need to prove that~$E_{\epsilon,\eta}^\delta$ admits minimisers in~$H^1(\Omega,\R^2)$, and secondly that the renormalised energy in~\eqref{DEF_renormalised_energy} admits minimisers corresponding to two boundary vortices of multiplicities 1, i.e.~$N=2$ and~$d_1=d_2=1$ (see Corollary~\ref{COR_7_IK22+DMI}).

\begin{lemma}
\label{lem-minimizer-EepsetaD}
Let~$\delta \in \R^2$ and~$\Omega \subset \R^2$ be a bounded, simply connected~$C^{1,1}$ domain. Assume~$\epsilon \rightarrow 0$ and~$\eta=\eta(\epsilon) \rightarrow 0$ in the regime~\eqref{DEF_regime2D}. There exists a minimiser of~$E_{\epsilon,\eta}^\delta$ over the set~$H^1(\Omega,\R^2)$ for small~$\epsilon,\eta>0$.
\end{lemma}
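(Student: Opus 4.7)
The plan is to apply the direct method of the calculus of variations in three standard steps: boundedness from below of $E_{\epsilon,\eta}^\delta$, coercivity of minimising sequences in $H^1(\Omega,\R^2)$, and weak lower semicontinuity along a subsequence. The crucial input is the comparison between $E_{\epsilon,\eta}^\delta$ and $E_{\epsilon,\eta}^0$ already performed in Lemma~\ref{LEM_for_THM_3.1_IK21+DMI}, which absorbs the sign-indefinite DMI term into half of the undecorated Ginzburg--Landau energy plus a constant.

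First, the computation inside the proof of Lemma~\ref{LEM_for_THM_3.1_IK21+DMI}, applied to an arbitrary $v \in H^1(\Omega,\R^2)$, yields $E_{\epsilon,\eta}^0(v) \leqslant 2\,E_{\epsilon,\eta}^\delta(v) + 8|\delta|^2|\Omega|$, valid for small $\epsilon,\eta>0$ in the regime~\eqref{DEF_regime2D} (which ensures $|\delta|^2 \ll 1/\eta^2$). Since $E_{\epsilon,\eta}^0 \geqslant 0$, this rearranges to $E_{\epsilon,\eta}^\delta(v) \geqslant -4|\delta|^2|\Omega|$, so $E_{\epsilon,\eta}^\delta$ is bounded below on $H^1(\Omega,\R^2)$. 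Moreover, plugging in the constant test map $v \equiv (1,0)$ gives a finite upper bound on $\inf_{H^1} E_{\epsilon,\eta}^\delta$ (the only nonzero contribution comes from the boundary term $\frac{1}{2\pi\epsilon}\int_{\dr\Omega}(\nu_1)^2 \d\mathcal{H}^1$), so the infimum is a finite real number.

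Second, let $(v_n)_n \subset H^1(\Omega,\R^2)$ be a minimising sequence. The above inequality gives $E_{\epsilon,\eta}^0(v_n) \leqslant C$ uniformly in $n$, so $\lV \nabla v_n \rV_{L^2(\Omega)}$ and $\lV 1-|v_n|^2 \rV_{L^2(\Omega)}$ are uniformly bounded. The latter controls $\lV v_n \rV_{L^4(\Omega)}$ (hence also $\lV v_n \rV_{L^2(\Omega)}$), so $(v_n)_n$ is bounded in $H^1(\Omega,\R^2)$. Up to extraction, I may assume $v_n \rightharpoonup v$ weakly in $H^1(\Omega,\R^2)$; Rellich's theorem then yields $v_n \to v$ strongly in $L^p(\Omega,\R^2)$ for every $p \in [1,+\infty)$, and compactness of the trace embedding gives strong convergence $v_n|_{\dr\Omega} \to v|_{\dr\Omega}$ in $L^p(\dr\Omega,\R^2)$ for every $p \in [1,+\infty)$.

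Third, I pass to the liminf term by term: the Dirichlet integral is weakly lower semicontinuous on $H^1$; the Ginzburg--Landau potential passes to the limit by the strong $L^4(\Omega)$-convergence of $v_n$; the boundary penalty passes to the limit by the strong $L^2(\dr\Omega)$-convergence of the traces; finally, the DMI term is continuous because it pairs the strong convergence $v_n \to v$ in $L^2(\Omega,\R^2)$ against the weak convergence $\nabla v_n \rightharpoonup \nabla v$ in $L^2(\Omega,\R^{2\times 2})$. Adding the four contributions gives $E_{\epsilon,\eta}^\delta(v) \leqslant \liminf_{n\to\infty} E_{\epsilon,\eta}^\delta(v_n) = \inf_{H^1} E_{\epsilon,\eta}^\delta$, so $v$ is a minimiser. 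The only potentially delicate step is confirming the lower bound and coercivity in the presence of the sign-indefinite DMI term; however, this is precisely what Lemma~\ref{LEM_for_THM_3.1_IK21+DMI} provides in the regime~\eqref{DEF_regime2D}, and once this is in hand the remaining ingredients are routine.
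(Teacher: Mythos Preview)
Your proof is correct and follows essentially the same approach as the paper: both apply the direct method, obtaining coercivity from the very inequality proved inside Lemma~\ref{LEM_for_THM_3.1_IK21+DMI} (the paper cites it explicitly too), and then invoke weak $H^1$ lower semicontinuity. You simply spell out the lower semicontinuity argument term by term where the paper just asserts it, which is fine.
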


\begin{proof}
We use the direct method in the calculus of variations. First, we show that~$E_{\epsilon,\eta}^\delta$ is coercive in~$H^1$. Indeed, if~$v \in H^1(\Omega,\R^2)$, then choosing~$\eta$ small such that~$4\lv \delta \rv^2 \leqslant \frac{1}{2\eta^2}$, we estimate as in the proof of Lemma~\ref{LEM_for_THM_3.1_IK21+DMI}:
\begin{equation*}
\lv 2 \int_\Omega \delta \cdot \nabla v \wedge v \ \d x \rv
\leqslant \frac{1}{2} \int_\Omega \lv \nabla v \rv^2 \d x
+ 4 \lv \delta \rv^2 \int_\Omega (1-\vert v \vert^2)^2 \d x
+ 4 \lv \delta \rv^2 \lv \Omega \rv,
\end{equation*}
and we conclude
\begin{align*}
E_{\epsilon,\eta}^\delta(v)
& \geqslant \frac{1}{2} \int_\Omega \left( \lv \nabla v \rv^2 + \frac{1}{\eta^2}(1-\vert v \vert^2)^2 \right) \d x
-C
\gtrsim \lV v \rV_{H^1}^2-1.
\end{align*}
Second, one easily checks that~$E_{\epsilon,\eta}^\delta$ is lower semicontinuous in the weak~$H^1$ topology for two-dimensional domains~$\Omega$. Therefore, we conclude to the existence of minimisers of~$E_{\epsilon,\eta}^\delta$ for small~$\epsilon$ and~$\eta$.
\end{proof}

The following statement extends~\cite[Proposition~20]{IK22} by giving a formula for the renormalised energy~$W_\Omega^\delta(\lb(a_j,d_j)\rb)$ defined in~\eqref{DEF_renormalised_energy}. In particular, it shows that the limit in~\eqref{DEF_renormalised_energy} exists. The formula for the renormalised energy is computed using the solution of a Neumann problem as in~\cite{BBH}.

\begin{proposition}
\label{prop-expr-renenergy-psi-R}
Let~$\delta \in \R^2$,~$\Omega \subset \R^2$ be a bounded, simply connected~$C^{1,1}$ domain, and~$\kappa$ be the curvature of~$\dr \Omega$. Let~$\lb a_j \in \partial \Omega \rb_{1 \leqslant j \leqslant N}$ be~$N \geqslant 2$ distinct points and~$d_j \in \lb \pm 1 \rb$ be the corresponding multiplicities, for~$j \in \lb 1,...,N \rb$, that satisfy~$\sum_{j=1}^N d_j = 2$. Then the limit in~\eqref{DEF_renormalised_energy} exists and the renormalised energy of~$\lb(a_j,d_j)\rb$ satisfies
\begin{equation}
\label{exprWOmegaD-psiR}
\begin{split}
W_\Omega^\delta(\lb (a_j,d_j) \rb)
& = - 2\pi \sum_{1 \leqslant j<k \leqslant N} d_jd_k \log \lv a_j-a_k \rv
\\
& \quad
- \int_{\dr \Omega} \psi(\kappa+2\delta^\perp \cdot \nu) \d \mathcal{H}^1
+ \pi \sum_{j=1}^N d_j R(a_j),
\end{split}
\end{equation}
where~$\nu$ is the outer unit normal vector on~$\partial\Omega$ and~$\psi$ denotes the unique solution (up to an additive constant) in~$W^{1,q}(\Omega)$, for every~$q \in [1,2)$, of the inhomogeneous Neumann problem
\begin{equation}
\label{syst-psi}
\lb \begin{array}{rcll}
\Delta \psi &=& 0 & \text{ in } \Omega,
\\ \frac{\dr \psi}{\dr \nu} &=& -\kappa + \pi \sum\limits_{j=1}^N d_j \Dirac_{a_j} & \text{ on } \dr \Omega,
\end{array} \right.
\end{equation}
and~$R$ is the harmonic function given by
\begin{equation}
\label{defR}
R(z) = \psi(z) + \sum_{j=1}^N d_j \log \lv z-a_j \rv,
\end{equation}
for every~$z \in \Omega$. Moreover, we have~$R \in C^{0,\alpha}(\overline{\Omega}) \cap W^{s,p}(\Omega)$ for every~$\alpha \in (0,1)$,~$p \in [1,+\infty)$ and~$s \in [1,1+\frac{1}{p})$.
\end{proposition}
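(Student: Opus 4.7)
The plan is to exploit the fact that $\psi$ is, up to an additive constant, the harmonic conjugate of $\phir_\ast$: the $BV$ boundary jumps of $\phir_\ast$ at the $a_j$'s translate into the logarithmic boundary singularities of $\psi$ that are exactly captured by~\eqref{defR}. Once this is established, the renormalised energy integral~\eqref{DEF_renormalised_energy} can be rewritten in terms of $\psi$ and computed by integration by parts on $\Omega_r:=\Omega\setminus\bigcup_{j=1}^N B_r(a_j)$, producing a boundary integral on $\partial\Omega$ plus explicit contributions from small half-circles around each $a_j$.

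\textbf{Step 1: harmonic conjugate identification.} Since $\Omega$ is simply connected and $\phir_\ast$ is harmonic in $\Omega$, it admits a harmonic conjugate $\widetilde{\phir}$ defined (up to a constant) by $\nabla\widetilde{\phir}=\nabla^\perp\phir_\ast$. A direct boundary computation gives $\partial_\nu\widetilde{\phir}=\nu\cdot\nabla^\perp\phir_\ast=-\partial_\tau\phir_\ast=-\partial_\tau\phir_0=-\kappa+\pi\sum_j d_j\Dirac_{a_j}$, which is exactly the Neumann condition of~\eqref{syst-psi} (solvable because $\int_{\partial\Omega}(-\kappa)\,\d\mathcal H^1+\pi\sum_j d_j=-2\pi+2\pi=0$ by Gauss-Bonnet and $\sum_j d_j=2$). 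Uniqueness of the Neumann problem (up to constant) yields $\psi=\widetilde{\phir}+\mathrm{const}$, hence $\nabla\psi=\nabla^\perp\phir_\ast$ in $\Omega$, and the pointwise identities
\begin{equation*}
\lv \nabla\phir_\ast \rv^2=\lv \nabla\psi \rv^2, \qquad \delta\cdot\nabla\phir_\ast=\delta^\perp\cdot\nabla\psi.
\end{equation*}

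\textbf{Step 2: integration by parts and small half-circle asymptotics.} Because $\psi$ is harmonic, one has $\lv \nabla\psi \rv^2=\mathrm{div}(\psi\nabla\psi)$, and since $\delta^\perp$ is constant, $\delta^\perp\cdot\nabla\psi=\mathrm{div}(\psi\,\delta^\perp)$; the divergence theorem on $\Omega_r$ thus reduces everything to boundary terms. On the outer piece $\partial\Omega\cap\overline{\Omega_r}$, the condition $\partial_\nu\psi=-\kappa$ (the Dirac atoms are excluded for small $r$) together with the $L^1(\partial\Omega)$-integrability of $\psi$ (logarithmic at $a_j$) produces in the limit the term $-\int_{\partial\Omega}\psi(\kappa+2\delta^\perp\cdot\nu)\,\d\mathcal H^1$. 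On each inner half-circle $\Omega\cap\partial B_r(a_j)$, of length $\pi r+o(r)$ by $C^{1,1}$ regularity, I use the decomposition $\psi(z)=-d_j\log\lv z-a_j \rv+f_j(z)$ with $f_j(z):=R(z)-\sum_{k\ne j}d_k\log\lv z-a_k \rv$ continuous at $a_j$ and $f_j(a_j)=R(a_j)-\sum_{k\ne j}d_k\log\lv a_j-a_k \rv$. Then $\psi=-d_j\log r+f_j(a_j)+o(1)$ and $\partial_{\nu_j}\psi=d_j/r+O(1)$ uniformly on the half-circle, which with $d_j^2=1$ gives
\begin{equation*}
\int_{\Omega\cap\partial B_r(a_j)}\psi\,\partial_{\nu_j}\psi\,\d\mathcal H^1=-\pi\log r+\pi d_j f_j(a_j)+o(1),
\end{equation*}
while the corresponding DMI contribution on the half-circles is $O(r\lv \log r \rv)=o(1)$. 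Summing over $j$ and using $\sum_j d_j\sum_{k\ne j}d_k\log\lv a_j-a_k \rv=2\sum_{j<k}d_jd_k\log\lv a_j-a_k \rv$ produces exactly the right-hand side of~\eqref{exprWOmegaD-psiR}; in particular the $\liminf$ in~\eqref{DEF_renormalised_energy} is a genuine limit.

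\textbf{Step 3: regularity of $R$, and main obstacle.} The function $R$ is harmonic in $\Omega$ (each $\log\lv \cdot-a_j \rv$ is harmonic on $\Omega$ since $a_j\in\partial\Omega$). I obtain its regularity by rewriting its Neumann data $\partial_\nu R=\partial_\nu\psi+\sum_j d_j\partial_\nu\log\lv \cdot-a_j \rv$ and showing that the Dirac masses $\pi d_j\Dirac_{a_j}$ coming from $\partial_\nu\psi$ cancel with the distributional singular parts of the $\partial_\nu\log\lv \cdot-a_j \rv$ at $a_j$; this cancellation is identified via the same half-circle limit used in Step 2 applied to a Green-type identity for $\log\lv \cdot-a_j \rv$ on $\Omega\setminus B_r(a_j)$. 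After cancellation, $\partial_\nu R$ is bounded (using $\kappa\in L^\infty(\partial\Omega)$), and standard $L^p$-theory and boundary Schauder estimates for the Neumann problem on $C^{1,1}$ domains yield $R\in C^{0,\alpha}(\overline\Omega)\cap W^{s,p}(\Omega)$ in the claimed range. The main obstacle is precisely this interplay: the uniform expansion of $\psi$ near each $a_j$ used in Step~2 relies on continuity of $R$ at $a_j$ (Step~3), and the identification of the distributional part of $\partial_\nu\log\lv \cdot-a_j \rv$ has to be done intrinsically, using only the $C^{1,1}$ regularity of $\partial\Omega$, without appealing to pointwise computations that would require additional smoothness.
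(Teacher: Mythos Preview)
Your argument is correct, but it takes a much longer route than the paper. The paper does not recompute the $\delta=0$ case at all: it writes
\[
W_\Omega^\delta(\{(a_j,d_j)\}) = W_\Omega^0(\{(a_j,d_j)\}) - \lim_{r\to 0}\int_{\Omega^r} 2\delta\cdot\nabla\phir_\ast\,\d x,
\]
and then cites \cite[Proposition~20]{IK22} for the formula~\eqref{exprWOmega0-psiR} for $W_\Omega^0$, for the existence/regularity of $\psi$ and $R$, and for the fact that the $\liminf$ is a limit when $\delta=0$. The only new computation is the DMI correction: since $\psi$ is a harmonic conjugate of $\phir_\ast$ one has $\nabla\phir_\ast=-\nabla^\perp\psi\in L^q(\Omega)$ for all $q\in[1,2)$, so by dominated convergence and one integration by parts
\[
\lim_{r\to 0}\int_{\Omega^r} 2\delta\cdot\nabla\phir_\ast\,\d x
= -2\int_\Omega \delta\cdot\nabla^\perp\psi\,\d x
= 2\int_{\partial\Omega}\psi(\delta^\perp\cdot\nu)\,\d\mathcal H^1,
\]
which is exactly the extra term in~\eqref{exprWOmegaD-psiR}. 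That is the entire proof.

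By contrast, you redo the whole boundary-vortex renormalised energy computation from scratch (Steps~1--2) and reprove the regularity of $R$ (Step~3), which amounts to reproving \cite[Proposition~20]{IK22}. This is fine and more self-contained, and your half-circle asymptotics and the pairing/symmetrisation $\sum_j d_j\sum_{k\ne j}(\cdots)=2\sum_{j<k}(\cdots)$ are the standard BBH-type steps. The ``main obstacle'' you flag---the apparent circularity between the uniform expansion of $\psi$ near $a_j$ and the continuity of $R$ at $a_j$---is a genuine technical point for your route, but the paper sidesteps it completely by importing both the formula and the regularity of $R$ from the $\delta=0$ reference; the DMI term contributes only a globally $L^1$ integrand with no additional singular analysis needed.
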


\begin{proof}
By the definition of~$W_\Omega^\delta(\lb(a_j,d_j)\rb)$ in~\eqref{DEF_renormalised_energy}, we have
\begin{equation*}
W_\Omega^\delta(\lb(a_j,d_j) \rb)
= W_\Omega^0(\lb(a_j,d_j) \rb)
- \lim\limits_{r \rightarrow 0} \int_{\Omega^r} 2 \delta \cdot \nabla \phir_\ast \ \d x,
\end{equation*}
where~$\Omega^r=\Omega \setminus \bigcup_{j=1}^N B_r(a_j)$ for~$r>0$, and~$\phir_\ast$ is the harmonic extension of~$\phir_0$ (given in Definition~\ref{DEF_phi0}) to~$\Omega$. First, by~\cite[Proposition 20]{IK22}, we deduce all the stated properties for~$\psi$ and~$R$; moreover
\begin{equation}
\label{exprWOmega0-psiR}
\begin{split}
W_\Omega^0(\lb (a_j,d_j) \rb)
& = -2\pi \sum_{1 \leqslant j<k \leqslant N} d_jd_k \log \lv a_j-a_k \rv
- \int_{\dr \Omega} \psi\kappa \ \d \mathcal{H}^1
+ \pi \sum_{j=1}^N d_j R(a_j).
\end{split}
\end{equation}
Moreover, any solution~$\psi$ of~\eqref{syst-psi} is clearly a harmonic conjugate of~$\phir_\ast$, in particular~$\nabla \phir_\ast = - \nabla^\perp \psi$, and~$\nabla \phir_\ast \in L^q(\Omega)$, for every~$q \in [1,2)$. It follows by dominated convergence theorem that
\begin{align*}
\lim\limits_{r \rightarrow 0} \int_{\Omega^r} 2 \delta \cdot \nabla \phir_\ast \ \d x
= 2 \int_\Omega \delta \cdot \nabla \phir_\ast \ \d x
= -2 \int_{\Omega} \delta \cdot \nabla^\perp \psi \ \d x
= 2 \int_{\dr \Omega} \psi (\delta^\perp \cdot \nu) \d \mathcal{H}^1
\end{align*}
and~\eqref{exprWOmegaD-psiR} follows.
\end{proof}

We now prove Theorem~\ref{THM_6_IK22+DMI}:

\begin{proof}[Proof of Theorem~\ref{THM_6_IK22+DMI}]
By~Proposition~\ref{prop-expr-renenergy-psi-R}, we have
\begin{equation}
\label{devWOmegaD}
W_\Omega^\delta(\lb (a_j,d_j) \rb)
= W_\Omega^0(\lb (a_j,d_j) \rb)
- 2 \int_{\dr \Omega} \psi (\delta^\perp \cdot \nu) \d \mathcal{H}^1,
\end{equation}
for any bounded, simply connected~$C^{1,1}$ domain $\Omega \subset \R^2$, with boundary curvature~$\kappa$. 
\begin{itemize}
\item[\textit{(i)}] We assume that~$\Omega=B_1$. By~\cite[Theorem~6]{IK22}, we have
\begin{equation}
\label{exprWOmegaD-B1-1}
W_{B_1}^0(\lb (a_j,d_j) \rb)
= - 2\pi \sum_{1\leqslant j<k \leqslant N} d_jd_k \log \lv a_j-a_k \rv,
\end{equation}
and, for every~$z \in B_1$, the solution~$\psi$ in~\eqref{syst-psi} is given (up to an additive constant) by
\begin{equation}
\label{expr-psi(w)}
\psi(z) = - \sum_{j=1}^N d_j \log \lv z-a_j \rv.
\end{equation}
By Green's formula,
\begin{equation*}
\int_{\dr B_1} \psi \delta^\perp \cdot \nu \ \d \mathcal{H}^1
= \int_{B_1} \mathrm{div}(\psi(z)\delta^\perp) \d z
= -\sum_{j=1}^N d_j \int_{B_1} \mathrm{div} \left( \delta^\perp\log\lv z-a_j \rv \right) \d z.
\end{equation*}
For any~$j \in \lb 1,...,N \rb$ and~$z \in B_1$,
\begin{align*}
\mathrm{div} \left( \delta^\perp\log\lv z-a_j \rv \right)
& = \delta^\perp \cdot \frac{z-a_j}{\lv z-a_j \rv^2}
= \Re\left( \frac{\overline{z-a_j}}{\lv z-a_j \rv^2} \delta^\perp \right)
= \Re\left( \frac{1}{z-a_j} \delta^\perp \right)
\end{align*}
with the identification~$\delta^\perp=\binom{-\delta_2}{\delta_1}=-\delta_2+i\delta_1$ and~$\Re(z)$ is the real part of~$z \in \C$, so that
\begin{align*}
\int_{B_1} \mathrm{div} \left( \delta^\perp\log\lv z-a_j \rv \right) \d z
& = \Re\left( \delta^\perp \int_{B_1} \frac{1}{z-a_j}\ \d z \right)
= -\pi \Re\left( \delta^\perp \overline{a_j} \right)
= \pi \delta \cdot a_j^\perp;
\end{align*}
above we used that for $a\in \partial B_1$, the function $f:B_1\to \C$, $f(z)= \frac{1}{z-a}$ for every $z\in B_1$ is holomorphic and integrable in $B_1$ so that the mean value theorem yields $f(0)=\frac{1}{\pi} \int_{B_1} f(z)\, dz$. We conclude
\begin{equation}
\label{exprWOmegaD-B1-2}
\int_{\dr B_1} \psi \delta^\perp \cdot \nu \ \d \mathcal{H}^1
= -\pi\sum_{j=1}^N d_j \delta \cdot a_j^\perp.
\end{equation}
\item[\textit{(ii)}] By~\cite[Theorem 6]{IK22}, we have
\begin{equation*}
\begin{split}
& W_\Omega^0(\lb (a_j,d_j) \rb)
\\
& \qquad = - 2\pi \sum_{1 \leqslant j<k \leqslant N} d_jd_k \log \lv \Psi(a_j)-\Psi(a_k) \rv + \pi \sum_{j=1}^N (d_j-1)\log \lv \partial_z\Psi(a_j) \rv
\\
& \qquad \quad + \int_{\dr \Omega}\kappa(w) \left( \sum_{j=1}^N d_j \log \lv \Psi(w)-\Psi(a_j) \rv - \log \lv \partial_z\Psi(w) \rv \right) \d \mathcal{H}^1(w),
\end{split}
\end{equation*}
and, for every~$w \in \Omega$, the solution~$\psi$ in~\eqref{syst-psi} is given (up to an additive constant) by
\begin{equation*}
\psi(w) = - \sum_{j=1}^N d_j \log \lv \Psi(w)-\Psi(a_j) \rv + \log \lv \partial_z\Psi(w) \rv.
\end{equation*}
The expected identity~\eqref{EQ_WOmegadelta} is a direct consequence of~\eqref{devWOmegaD} and the above identities.
\end{itemize}
\end{proof}

We now prove Corollary~\ref{COR_7_IK22+DMI}, that gives the existence of a minimising pair~$(a_1^\ast,a_2^\ast)$ for the renormalised energy when~$N=2$ and~$d_1=d_2=1$.

\begin{proof}[Proof of Corollary~\ref{COR_7_IK22+DMI}]
Let~$\Phi \colon \overline{B_1} \rightarrow \overline{\Omega}$ be a~$C^1$ conformal diffeomorphism with inverse~$\Psi = \Phi^{-1}$. Let~$\mathfrak{D} = (\dr \Omega \times \dr \Omega) \setminus \lb (a,a) : a \in \dr \Omega \rb$. By Theorem~\ref{THM_6_IK22+DMI}, for every~$(a_1,a_2) \in \mathfrak{D}$,
\begin{equation*}
W_\Omega^\delta(\lb (a_1,1),(a_2,1) \rb) = -2\pi \log \lv \Psi(a_1)-\Psi(a_2) \rv + F(a_1,a_2)
\end{equation*}
where for every~$(a_1,a_2) \in \partial\Omega \times \partial\Omega$,
\begin{equation*}
\begin{split}
F(a_1,a_2)=
\int_{\dr \Omega} (\kappa(w) + 2 \delta^\perp \cdot \nu(w))& (\log \lv \Psi(w)-\Psi(a_1) \rv + \log \lv \Psi(w)-\Psi(a_2) \rv
\\ & \quad 
- \log \vert \partial_z\Psi(w) \vert ) \d \mathcal{H}^1(w).
\end{split}
\end{equation*}
Setting~$b_1=\Psi(a_1) \in \partial B_1$ and~$b_2=\Psi(a_2) \in \partial B_1$, we get, after changing variables,
\begin{equation*}
\begin{split}
F(a_1,a_2)=\int_{\dr B_1} (\kappa(\Phi(z))+2\delta^\perp \cdot \nu(\Phi(z))) & (\log \lv z-b_1 \rv + \log \lv z-b_2 \rv
\\ & \quad + \log \lv \partial_z\Phi(z) \rv ) \lv \partial_z\Phi(z) \rv \d \mathcal{H}^1(z).
\end{split}
\end{equation*}
Thus~$F$ is bounded on~$\partial\Omega \times \partial\Omega$, since~$\kappa + 2 \delta^\perp \cdot \nu \in L^\infty(\dr\Omega)$ and the functions~$z \mapsto \log \lv z-b_1 \rv$ and~$z \mapsto \log \lv z-b_2 \rv$ are in~$L^1(\dr B_1)$. Moreover, the function~$(a_1,a_2) \in \mathfrak{D} \mapsto W_\Omega^\delta(\lb (a_1,1),(a_2,1) \rb)$ is continuous on~$\mathfrak{D}$.
Let~$(a_1^{(n)},a_2^{(n)}) \subset \mathfrak{D}$ be a minimising sequence for~$W_\Omega^\delta(\lb(\cdot,1),(\cdot,1) \rb)$, i.e.
\begin{equation*}
\lim\limits_{n \rightarrow +\infty} W_\Omega^\delta(\{ (a_1^{(n)},1),(a_2^{(n)},1) \}) = \inf\limits_\mathfrak{D} W_\Omega^\delta(\lb(\cdot,1),(\cdot,1) \rb).
\end{equation*}
Note that such a sequence exists because~$F$ is bounded in~$\mathfrak{D}$. As~$\dr \Omega \times \dr \Omega$ is compact, we can \linebreak assume (up to a subsequence) that~$(a_1^{(n)},a_2^{(n)})$ converges to some~$(a_1^\ast,a_2^\ast) \in \dr \Omega \times \dr \Omega$. \linebreak Since~$(W_\Omega^\delta(\{ (a_1^{(n)},1),(a_2^{(n)},1) \}))$ is bounded, then using the boundedness of~$F$, we deduce that the sequence~$(\log \vert \Psi(a_1^{(n)})-\Psi(a_2^{(n)}) \vert)$ is bounded. By taking the limits as~$n \rightarrow +\infty$, we deduce that~$\Psi(a_1^\ast) \neq \Psi(a_2^\ast)$, and since~$\Psi$ is injective,~$a_1^\ast \neq a_2^\ast$, i.e.~$(a_1^\ast,a_2^\ast) \in \mathfrak{D}$. Finally, by continuity of~$W_\Omega^\delta(\lb (\cdot,1),(\cdot,1) \rb)$ over~$\mathfrak{D}$, we get
\begin{equation*}
W_\Omega^\delta(\{ (a_1^\ast,1),(a_2^\ast,1) \}) = \inf\limits_\mathfrak{D} W_\Omega^\delta(\lb(\cdot,1),(\cdot,1) \rb).
\end{equation*}
\end{proof}

We prove Theorem~\ref{THM_minimisers_unit_disk}, that gives the configuration of the vortices that minimises the renormalised energy in the unit disk $B_1$ in $\R^2$.

\begin{proof}[Proof of Theorem~\ref{THM_minimisers_unit_disk}]
By Theorem~\ref{THM_6_IK22+DMI} and Corollary~\ref{COR_7_IK22+DMI}, there exists a pair of distinct \linebreak points~$(a_1^\ast,a_2^\ast) \in \dr B_1 \times \dr B_1$ that minimises the renormalised energy
\begin{equation*}
W_{B_1}^\delta(\lb(a_1,1),(a_2,1)\rb)
= -2\pi\log \lv a_1-a_2 \rv + 2\pi\delta \cdot(a_1^\perp+a_2^\perp)
\end{equation*}
defined for~$(a_1,a_2) \in \dr B_1 \times \dr B_1$ such that~$a_1 \neq a_2$. We write~$a_1^\ast=e^{i\phir_1}$ and~$a_2^\ast=e^{i\phir_2}$ for some~$\phir_1$,~$\phir_2 \in \R$. Since~$a_1^\ast \neq a_2^\ast$, we have~$\phir_1-\phir_2 \notin 2\pi\Z$. Computing the renormalised energy above in terms of~$\phir_1$,~$\phir_2$ and~$\delta=(\delta_1,\delta_2)$, we deduce that
\begin{equation*}
W_{B_1}^\delta(\lb(a_1^\ast,1),(a_2^\ast,1)\rb)
= 2\pi f(\phir_1,\phir_2)
\end{equation*}
where
\begin{equation*}
f(\phir_1,\phir_2)
= -\frac{1}{2}\log 2 -\frac{1}{2}\log (1-\cos(\phir_1-\phir_2))-\delta_1(\sin\phir_1+\sin\phir_2)+\delta_2(\cos \phir_1+\cos\phir_2)
\end{equation*}
is~$C^\infty(\R^2 \setminus S)$ where~$S=\lb (\phir_1,\phir_2) \in \R^2 : \phir_1-\phir_2 \in 2\pi\Z \rb$. Hence, the problem of minimising the renormalised energy turns in minimising~$f$ over~$\R^2 \setminus S$.
\vspace{.1cm}

\textit{Step 1 :} We compute the critical points of~$f$.

\noindent
For every~$(\phir_1,\phir_2) \in \R^2 \setminus S$,
\begin{equation}
\label{sys_critical_points}
\nabla f(\phir_1,\phir_2)
= \left( \begin{matrix}
-\frac{\sin (\phir_1-\phir_2)}{2(1-\cos (\phir_1-\phir_2))}-a_1^\ast\cdot \delta
\\
\frac{\sin (\phir_1-\phir_2)}{2(1-\cos (\phir_1-\phir_2))}-a_2^\ast\cdot \delta
\end{matrix}
\right)
=0
\ \ \Longleftrightarrow \ \
\lb \begin{array}{rcl}
\frac{\sin(\phir_1-\phir_2)}{1-\cos(\phir_1-\phir_2)}&=&-2a_1^\ast\cdot\delta
\\
(a_1^\ast+a_2^\ast)\cdot\delta&=&0
\end{array} \right. .
\end{equation}
Moreover,
\begin{align*}
(a_1^\ast+a_2^\ast)\cdot\delta=0
& \ \ \Longleftrightarrow \ \ 
\delta_1(\cos\phir_1+\cos\phir_2)+\delta_2(\sin\phir_1+\sin\phir_2)=0
\\
& \ \ \Longleftrightarrow \ \ 
2\cos\left(\frac{\phir_1-\phir_2}{2}\right)\left(\delta_1\cos\frac{\phir_1+\phir_2}{2}+\delta_2\sin\frac{\phir_1+\phir_2}{2}\right)=0
\\
& \ \ \Longleftrightarrow \ \
\cos \frac{\phir_1-\phir_2}{2}=0 \ \ \text{ or } \ \ \delta \cdot b=0 \text{ where } b=e^{i(\phir_1+\phir_2)/2}.
\end{align*}

\noindent
\textit{Case 1:}~$\cos \frac{\phir_1-\phir_2}{2}=0$. Then we have~$\phir_1-\phir_2=\pi \ (\text{mod } 2\pi)$, thus~$a_1^\ast$ and~$a_2^\ast$ are diametrically opposed (i.e.~$a_2^\ast=-a_1^\ast$). By the first equation in~\eqref{sys_critical_points} and since~$\delta=\lv \delta \rv e^{i\theta}$, we deduce that
\begin{align*}
a_1^\ast\cdot\delta=0
& \ \ \Longleftrightarrow \ \ 
\cos(\phir_1-\theta)=0
\ \ \Longleftrightarrow \ \ 
\phir_1=\theta+\frac{\pi}{2} \ (\text{mod } \pi).
\end{align*}
Hence,~$a_1^\ast=e^{i\phir_1}
=\pm ie^{i\theta}
=\pm \frac{1}{\lv \delta \rv} \delta^\perp$.

\noindent
\textit{Case 2:}~$\delta \cdot b=0$ where~$b=e^{i(\phir_1+\phir_2)/2}$ and~$\phir_1-\theta \neq \frac{\pi}{2} \ (\text{mod } \pi)$. Since~$\delta=\lv \delta \rv e^{i\theta}$, we have
\begin{align*}
\delta \cdot b =0
& \ \ \Longleftrightarrow \ \
\delta \perp b
\ \ \Longleftrightarrow \ \
\frac{\phir_1+\phir_2}{2} = \theta+\frac{\pi}{2} \ (\text{mod } \pi)
\ \ \Longleftrightarrow \ \
\phir_1+\phir_2 = 2\theta+\pi \ (\text{mod } 2\pi),
\end{align*}
thus~$a_1^\ast$ and~$a_2^\ast$ are symmetric with respect to~$\delta^\perp$. By the first equation in \eqref{sys_critical_points}, we have
\begin{align*}
-2\lv \delta \rv a_1^\ast \cdot e^{i\theta}=\frac{\sin(\phir_1-(2\theta+\pi-\phir_1))}{1-\cos(\phir_1-(2\theta+\pi-\phir_1))}
& \ \ \Longleftrightarrow \ \
-2\lv \delta \rv \cos(\phir_1-\theta)
= \frac{-\sin(\phir_1-\theta)}{\cos(\phir_1-\theta)}
\\
& \ \ \Longleftrightarrow \ \
2\lv \delta \rv \sin^2(\phir_1-\theta)+\sin(\phir_1-\theta)-2\lv \delta \rv=0
\\
& \ \ \Longleftrightarrow \ \
\lb \begin{array}{l}
2\lv \delta \rv X^2+X-2\lv \delta \rv=0
\\ X=\sin(\phir_1-\theta)
\end{array} \right. .
\end{align*}
As~$X=\sin(\phir_1-\theta) \geqslant -1$, we deduce that
\begin{align*}
\sin(\phir_1-\theta)
& = \sqrt{1+\frac{1}{16\lv \delta \rv^2}}-\frac{1}{4\lv \delta \rv} \in [-1,1],
\end{align*}
thus~$\phir_1=\theta+\theta_\delta \ (\text{mod } 2\pi)$ or~$\phir_1=\theta+\pi - \theta_\delta \ (\text{mod } 2\pi)$ where~$\theta_\delta=\arcsin\left(\sqrt{1+\frac{1}{16\lv \delta \rv^2}}-\frac{1}{4\lv \delta \rv}\right)$. We obtain that~$\phir_2=2\theta+\pi-\phir_1=\theta+\pi-\theta_\delta \ (\text{mod } 2\pi)$ or~$\phir_2=\theta+\theta_\delta \ (\text{mod } 2\pi)$. Up to interchanging~$\phir_1$ and~$\phir_2$, we will assume in the following that~$\phir_1=\theta+\theta_\delta \ (\text{mod } 2\pi)$.
\vspace{.1cm}

\textit{Step 2 :} We study the nature of the critical points of~$f$.

\noindent
For every~$(\phir_1,\phir_2) \in \R^2 \setminus S$, the Hessian matrix of~$f$ is
\begin{equation*}
Hess(f)(\phir_1,\phir_2)=
\left( \begin{matrix}
\frac{1}{2(1-\cos(\phir_1-\phir_2))}+a_1^\ast\cdot\delta^\perp
& \frac{-1}{2(1-\cos(\phir_1-\phir_2))}
\\
\frac{-1}{2(1-\cos(\phir_1-\phir_2))}
& \frac{1}{2(1-\cos(\phir_1-\phir_2))}+a_2^\ast\cdot\delta^\perp
\end{matrix} \right)
\end{equation*}
and its determinant is
\begin{align*}
h(\phir_1,\phir_2)
& = \frac{1}{2(1-\cos(\phir_1-\phir_2))}(a_1^\ast+a_2^\ast) \cdot \delta^\perp + (a_1^\ast\cdot\delta^\perp)(a_2^\ast\cdot\delta^\perp).
\end{align*}

\noindent
\textit{Case~1:} $\cos\frac{\phir_1-\phir_2}{2}=0$. Here, we have diametrically opposed points~$a_2^\ast=-a_1^\ast$ with~$a_1^\ast=\pm \frac{1}{\lv \delta \rv}\delta^\perp$, and
\begin{equation*}
h(\phir_1,\phir_2)
= (a_1^\ast\cdot\delta^\perp)(a_2^\ast\cdot\delta^\perp)
= -(a_1^\ast\cdot\delta^\perp)^2
= -\lv \delta \rv^2<0.
\end{equation*}
Hence,~$(\phir_1,\phir_2)$ cannot be a minimiser for our renormalised energy.\footnote{Note that the diametrically opposed configuration~$(a_1^\ast, -a_1^\ast)$ with~$a_1^\ast=\pm \frac{1}{\lv \delta \rv} \delta^\perp\in \partial B_1$ is an unstable critical point of the renormalised energy when~$\delta \neq 0$.} 

\noindent
\textit{Case~2:} $\delta \cdot b=0$ and~$\cos\frac{\phir_1-\phir_2}{2} \neq 0$. Here,~$a_1^\ast$ and~$a_2^\ast$ are symmetric with respect to~$\delta^\perp$, with (by our convention)~$\phir_1=\theta+\theta_\delta \ (\text{mod } 2\pi)$ and~$\phir_2=\theta+\pi-\theta_\delta \ (\text{mod } 2\pi)$ where~$\theta_\delta$ is given above. Then~$h(\phir_1,\phir_2)>0$, because
\begin{align*}
a_1^\ast\cdot\delta^\perp
= e^{i\phir_1} \cdot \lv \delta \rv e^{i(\theta+\pi/2)}
& = \lv \delta \rv \cos\left(\phir_1-\theta-\frac{\pi}{2}\right)
\\
& = \lv \delta \rv \sin(\phir_1-\theta)
= \lv \delta \rv \left( \sqrt{1+\frac{1}{16\lv \delta \rv^2}}-\frac{1}{4\lv \delta \rv}\right)>0,
\end{align*}
and~$a_2^\ast\cdot\delta^\perp=a_1^\ast\cdot\delta^\perp>0$, as~$a_2^\ast$ and~$a_1^\ast$ are symmetric with respect to~$\delta^\perp$. Moreover, the trace of~$Hess(f)(\phir_1,\phir_2)$ is positive, hence~$(\phir_1,\phir_2)=(\theta+\theta_\delta,\theta+\pi-\theta_\delta)$ minimizes~$f$.
\end{proof}

To conclude this section, we prove Corollary~\ref{COR_1.6_IK21+DMI}.

\begin{proof}[Proof of Corollary~\ref{COR_1.6_IK21+DMI}]
By Lemma~\ref{lem-minimizer-EepsetaD}, there exists a minimiser~$v_\epsilon$ of~$E_{\epsilon,\eta}^\delta$ on~$H^1(\Omega,\R^2)$ for small~$\varepsilon>0$. By Corollary~\ref{COR_7_IK22+DMI}, there exist two points~$a_1^\ast \neq a_2^\ast \in \dr \Omega$ such that
\begin{equation}
\label{eqn0cor1.6}
W_\Omega^\delta(\lb (a_1^\ast,1),(a_2^\ast,1) \rb)
= \min \lb W_\Omega^\delta(\lb (\tilde{a}_1,1),(\tilde{a}_2,1) \rb) : \tilde{a}_1 \neq \tilde{a}_2 \in \dr \Omega \rb.
\end{equation}
By Theorem~\ref{THM_1.5_IK21+DMI} applied to~$\lb(a^\ast_1,1),(a^\ast_2,1)\rb$, the minimisers~$v_\epsilon$ must satisfy
\begin{equation}
\label{eqn1cor1.6}
E_{\epsilon,\eta}^\delta(v_\epsilon)
\leqslant 2\pi \lv \log \epsilon \rv + W_\Omega^\delta(\lb (a_1^\ast,1),(a_2^\ast,1) \rb) + 2\gamma_0 + o_\epsilon(1).
\end{equation}
By Theorem~\ref{THM_1.2_IK21+DMI}\textit{(i)}, for a subsequence,~$(\mathcal{J}(v_\epsilon))_{\epsilon>0}$ converges as in~\eqref{EQ_convergence_jacobian_2D} to
\begin{equation*}
J = -\kappa \mathcal{H}^1 \llcorner \dr \Omega + \pi\sum_{j=1}^N d_j \Dirac_{a_j}
\end{equation*}
for~$N \geqslant 2$ distinct boundary points~$a_1,...,a_N \in \dr \Omega$, with~$d_1,...,d_N \in \Z \setminus \lb 0 \rb$ such that~$\sum_{j=1}^N d_j = 2$. By Theorem~\ref{THM_1.2_IK21+DMI}\textit{(ii)}, we also have
\begin{equation}
\label{eqn2cor1.6}
\liminf\limits_{\epsilon \rightarrow 0} \frac{1}{\lv \log \epsilon \rv} E_{\epsilon,\eta}^\delta(v_\epsilon) \geqslant \pi \sum_{j=1}^N \lv d_j \rv.
\end{equation}
Combining~\eqref{eqn1cor1.6} and~\eqref{eqn2cor1.6}, we get~$\sum_{j=1}^N \lv d_j \rv \leqslant 2$, hence~$\sum_{j=1}^N \left( \lv d_j \rv - d_j \right) \leqslant 0$ so that~$d_j=|d_j|>0$ for $1 \leqslant j\leqslant N$. In particular,~\eqref{EQ_limsup_order2_2D} holds thanks to~\eqref{eqn1cor1.6} and applying Theorem~\ref{THM_1.4_IK21+DMI}\textit{(i)}, we deduce that~$N=2$,~$d_1=d_2=1$ and
\begin{equation}
\label{eqn3cor1.6}
E_{\epsilon,\eta}^\delta(v_\epsilon)
\geqslant 2\pi \lv \log \epsilon \rv + W_\Omega^\delta(\lb (a_1,1),(a_2,1) \rb) + 2\gamma_0 + o_\epsilon(1).
\end{equation}
Combining~\eqref{eqn1cor1.6} and~\eqref{eqn3cor1.6}, and letting~$\varepsilon \rightarrow 0$, we get
\begin{equation*}
W_\Omega^\delta(\lb (a_1,1),(a_2,1) \rb) \leqslant W_\Omega^\delta(\lb (a_1^\ast,1), (a_2^\ast,1) \rb),
\end{equation*}
so by definition of~$W_\Omega^\delta(\lb (a_1^\ast,1), (a_2^\ast,1) \rb)$, we deduce that~$(a_1,a_2)$ is also a minimiser in~\eqref{eqn0cor1.6}. It follows then from~\eqref{eqn1cor1.6} and~\eqref{eqn3cor1.6} that
\begin{equation}
\label{eqn4cor1.6}
E_{\epsilon,\eta}^\delta(v_\epsilon)
= 2\pi \lv \log \epsilon \rv + W_\Omega^\delta(\lb (a_1,1),(a_2,1) \rb) + 2\gamma_0 + o_\epsilon(1).
\end{equation}
By~\eqref{limsup_DMI_term}, for a subsequence,~$(v_\epsilon)$ converges weakly in~$W^{1,q}(\Omega,\R^2)$ for every~$q \in [1,2)$, and strongly in~$L^p(\Omega,\R^2)$ for every~$p \in [1,+\infty)$, to~$e^{i\widehat{\phir}_0}$, where~$\widehat{\phir}_0$ is an extension to~$\Omega$ of a \linebreak function~$\phir_0 \in BV(\dr \Omega,\pi \Z)$ that satisfies
\begin{equation*}
\dr_\tau \phir_0 = \kappa \mathcal{H}^1 \llcorner \partial\Omega - \pi(\Dirac_{a_1}+\Dirac_{a_2}) \ \ \text{ as measure on } \dr \Omega
\end{equation*}
and~$e^{i\phir_0} \cdot \nu = 0$ in~$\dr \Omega \setminus \lb a_1,a_2 \rb$.

It remains to prove that~$\widehat{\phir}_0$ is harmonic in~$\Omega$. Let~$r>0$ be small. As for a subsequence,~$(\nabla v_\epsilon)_\epsilon$ converges weakly to~$\nabla (e^{i\widehat{\phir}_0})$ in~$L^{3/2}(\Omega \setminus (B_r(a_1) \cup B_r(a_2))$, we have for every test \linebreak function~$\zeta\in C^\infty_c(\Omega \setminus (B_r(a_1) \cup B_r(a_2)), \R^2)$,
\begin{align*}
\lim_{\epsilon \rightarrow 0} \int_{\Omega \setminus (B_r(a_1) \cup B_r(a_2))} \nabla v_\epsilon\cdot \zeta \ \d x
=\int_{\Omega \setminus (B_r(a_1) \cup B_r(a_2))} \nabla(e^{i\widehat{\phir}_0})\cdot \zeta \ \d x
\end{align*}
yielding 
\begin{equation*}
\int_{\Omega \setminus (B_r(a_1) \cup B_r(a_2))} \vert \nabla \widehat{\phir}_0 \vert^2 \d x
\leqslant \liminf\limits_{\epsilon \rightarrow 0} \int_{\Omega \setminus (B_r(a_1) \cup B_r(a_2))} \vert \nabla v_\epsilon \vert^2 \d x,
\end{equation*}
since~$\vert \nabla (e^{i\widehat{\phir}_0}) \vert = \vert \nabla \widehat{\phir}_0 \vert$. Moreover, using that~$v_\epsilon \rightarrow e^{i\widehat{\phir}_0}$ in~$L^3(\Omega)$ for a subsequence~$\epsilon \rightarrow 0$, we get
\begin{equation*}
\int_{\Omega \setminus (B_r(a_1) \cup B_r(a_2))} -\delta \cdot \nabla \widehat{\phir}_0 \ \d x
= \lim\limits_{\epsilon \rightarrow 0} \int_{\Omega \setminus (B_r(a_1) \cup B_r(a_2))} \delta \cdot \nabla v_\epsilon \wedge v_\epsilon \ \d x,
\end{equation*}
since~$\nabla (e^{i\widehat{\phir}_0}) \wedge e^{i\widehat{\phir}_0} = - \nabla \widehat{\phir}_0$. From these observations, we deduce
\begin{equation}
\label{eqn5cor1.6}
\begin{split}
& \liminf\limits_{r \rightarrow 0}
\left( \int_{\Omega \setminus (B_r(a_1) \cup B_r(a_2))} \left( \vert \nabla \widehat{\phir}_0 \vert^2 - 2 \delta \cdot \nabla \widehat{\phir}_0 \right) \d x
- 2 \pi \log \frac{1}{r} \right)
\\
& \leqslant
\liminf\limits_{r \rightarrow 0}
\liminf\limits_{\epsilon \rightarrow 0}
\left( \int_{\Omega \setminus (B_r(a_1) \cup B_r(a_2))} \left( \vert \nabla v_\epsilon \vert^2 + 2 \delta \cdot \nabla v_\epsilon \wedge v_\epsilon \right) \d x
- 2 \pi \log \frac{1}{r} \right).
\end{split}
\end{equation}
Inside the disks~$\Omega \cap (B_r(a_1) \cup B_r(a_2))$, by the compactness of~$(v_\epsilon)_\epsilon$, we have as above
\begin{align*}
&\lim_{\epsilon \rightarrow 0}\lv \int_{\Omega \cap (B_r(a_1) \cup B_r(a_2))} 2 \delta \cdot \nabla v_\epsilon \wedge v_\epsilon \ \d x \rv
=\lv \int_{\Omega \cap (B_r(a_1) \cup B_r(a_2))} 2 \delta \cdot \nabla \widehat{\phir}_0\ \d x \rv \\
&\quad \quad \leqslant 2\lv \delta \rv |\Omega \cap (B_r(a_1) \cup B_r(a_2))|^{1/3} \lV \nabla \widehat{\phir}_0 \rV_{L^{3/2}(\Omega)}
\leqslant C r^{2/3}
\end{align*}
for some constant~$C>0$ independent of~$\epsilon$ and~$r$, and thus
\begin{equation}
\label{eqn6cor1.6}
\liminf\limits_{r \rightarrow 0}
\liminf\limits_{\epsilon \rightarrow 0}
\int_{\Omega \cap (B_r(a_1) \cup B_r(a_2))} 2 \delta \cdot \nabla v_\epsilon \wedge v_\epsilon \ \d x
= 0.
\end{equation}
Moreover, by~\cite[Equation~(86)]{IK21}, we have
\begin{equation}
\label{eqn7cor1.6}
\liminf\limits_{r \rightarrow 0}
\liminf\limits_{\epsilon \rightarrow 0}
\left( E_{\epsilon,\eta}^0(v_\epsilon; \Omega \cap (B_r(a_1) \cup B_r(a_2))) - 2\pi\log \frac{r}{\epsilon} - 2\gamma_0 \right)
\geqslant 0.
\end{equation}
Note that, for every small~$r>0$,
\begin{align*}
&E_{\epsilon,\eta}^\delta(v_\epsilon) - 2\pi \lv \log \epsilon \rv - 2\gamma_0
 = E_{\epsilon,\eta}^0(v_\epsilon;\Omega \cap (B_r(a_1) \cup B_r(a_2))) -2\pi \log \frac{r}{\epsilon} - 2\gamma_0
\\
& \quad + \int_{\Omega \cap (B_r(a_1) \cup B_r(a_2))} 2 \delta \cdot \nabla v_\epsilon \wedge v_\epsilon \ \d x
 + E_{\epsilon,\eta}^\delta(v_\epsilon;\Omega \setminus (B_r(a_1) \cup B_r(a_2))) -2\pi \log \frac{1}{r}
\\
& \geqslant E_{\epsilon,\eta}^0(v_\epsilon;\Omega \cap (B_r(a_1) \cup B_r(a_2))) -2\pi \log \frac{r}{\epsilon} - 2\gamma_0
\\
& \quad + \int_{\Omega \cap (B_r(a_1) \cup B_r(a_2))} 2 \delta \cdot \nabla v_\epsilon \wedge v_\epsilon \ \d x
 + \int_{\Omega \setminus (B_r(a_1) \cup B_r(a_2))} \left( \vert \nabla v_\epsilon \vert^2 + 2 \delta \cdot \nabla v_\epsilon \wedge v_\epsilon \right) \d x
- 2 \pi \log \frac{1}{r}.
\end{align*}
By~\eqref{eqn4cor1.6},~\eqref{eqn5cor1.6},~\eqref{eqn6cor1.6} and~\eqref{eqn7cor1.6}, it follows by passing to the limit as~$\epsilon \rightarrow 0$ and then~$r \rightarrow 0$:
\begin{equation}
\label{eqn8cor1.6}
\begin{split}
& W_\Omega^\delta(\lb(a_1,1),(a_2,1)\rb)=
\liminf\limits_{\epsilon \rightarrow 0}
\left( E_{\epsilon,\eta}^\delta(v_\epsilon) - 2\pi \lv \log \epsilon \rv - 2\gamma_0 \right)
\\
& \qquad \geqslant
\liminf\limits_{r \rightarrow 0}
\left( \int_{\Omega \setminus (B_r(a_1) \cup B_r(a_2))} \left( \vert \nabla \widehat{\phir}_0 \vert^2 - 2 \delta \cdot \nabla \widehat{\phir}_0 \right) \d x
- 2 \pi \log \frac{1}{r} \right).
\end{split}
\end{equation}
Let~$\phir_\ast$ be the harmonic extension of~$\phir_0$ to~$\Omega$. Then~$\widehat{\phir}_0 - \phir_\ast \in W_0^{1,q}(\Omega)$, for every~$q \in [1,2)$. For every small~$r>0$,
\begin{align*}
\int_{\Omega \setminus (B_r(a_1) \cup B_r(a_2))}
2 \delta \cdot (\nabla \widehat{\phir}_0 - \nabla \phir_\ast) \d x
& = \int_\Omega 2\delta \cdot \nabla(\widehat{\phir}_0-\phir_\ast) \d x
\\
& \quad -2\int_{\Omega \cap(B_r(a_1)\cup B_r(a_2))} 2\delta \cdot \nabla (\widehat{\phir}_0-\phir_\ast) \d x.
\end{align*}
As
\begin{align*}
\lv -2\int_{\Omega \cap(B_r(a_1)\cup B_r(a_2))} 2\delta \cdot \nabla (\widehat{\phir}_0-\phir_\ast)\d x \rv
& \leqslant Cr^{2/3} \lV \nabla (\widehat{\phir}_0-\phir_\ast) \rV_{L^{3/2}(\Omega)}
 \leqslant Cr^{2/3},
\end{align*}
for some~$C>0$ independent of~$r$, by Green's formula and using that~$\widehat{\phir}_0=\phir_\ast$ on~$\dr\Omega$, we get
\begin{equation*}
\liminf\limits_{r \rightarrow 0} \int_{\Omega \setminus (B_r(a_1) \cup B_r(a_2))}
2 \delta \cdot (\nabla \widehat{\phir}_0 - \nabla \phir_\ast) \d x = 0.
\end{equation*}
Combining this observation with~\eqref{eqn8cor1.6} and the definition of~$W_\Omega^\delta(\lb(a_1,1),(a_2,1)\rb)$, we deduce that
\begin{align*}
& W_\Omega^\delta(\lb (a_1,1),(a_2,1) \rb)
\geqslant
\liminf\limits_{r \rightarrow 0} \int_{\Omega \setminus (B_r(a_1) \cup B_r(a_2))} \left( \vert \nabla \widehat{\phir}_0 \vert^2 - \vert \nabla \phir_\ast \vert^2 \right)\d x
\\
& \qquad \quad
+ \liminf\limits_{r \rightarrow 0} \left( \int_{\Omega \setminus (B_r(a_1) \cup B_r(a_2))} \left( \vert \nabla \phir_\ast \vert^2 - 2 \delta \cdot \nabla \phir_\ast \right)\d x
- 2 \pi \log \frac{1}{r} \right)
\\
& \qquad =
\liminf\limits_{r \rightarrow 0} \int_{\Omega \setminus (B_r(a_1) \cup B_r(a_2))} \left( \vert \nabla \widehat{\phir}_0 \vert^2 - \vert \nabla \phir_\ast \vert^2 \right)\d x
+ W_\Omega^\delta(\lb (a_1,1),(a_2,1) \rb),
\end{align*}
i.e.
\begin{equation*}
\liminf\limits_{r \rightarrow 0} \int_{\Omega \setminus (B_r(a_1) \cup B_r(a_2))} \left( \vert \nabla \widehat{\phir}_0 \vert^2 - \vert \nabla \phir_\ast \vert^2 \right) \d x
\leqslant 0.
\end{equation*}
To deduce that~$\widehat{\phir}_0=\phir_\ast$, thus~$\widehat{\phir}_0$ is harmonic in~$\Omega$, we proceed as follows:
\begin{equation*}
\vert \nabla \widehat{\phir}_0 \vert^2 - \vert \nabla \phir_\ast \vert^2 = \vert \nabla (\widehat{\phir}_0-\phir_\ast) \vert^2 + 2 \nabla \phir_\ast \cdot \nabla (\widehat{\phir}_0-\phir_\ast).
\end{equation*}
Since~$\phir_\ast$ behaves as the sum of an angular function around~$a_1$ and~$a_2$ and a harmonic \linebreak function~$h \in W^{1,p}(\Omega)$ for every~$p \in [1,+\infty)$ (as in Step~3 of the proof of~\cite[Theorem~4.2]{IK21}), then integration by parts yields for small~$r>0$:
\begin{align*}
&\int_{\Omega \setminus (B_r(a_1) \cup B_r(a_2))} \nabla \phir_\ast \cdot \nabla(\widehat{\phir}_0-\phir_\ast) \ \d x
 = \int_{\dr(\Omega \setminus (B_r(a_1) \cup B_r(a_2)))} \dr_\nu \phir_\ast (\widehat{\phir}_0-\phir_\ast) \ \d \mathcal{H}^1
\\
& = \int_{\Omega \cap \dr(B_r(a_1) \cup B_r(a_2))} \dr_\nu h (\widehat{\phir}_0-\phir_\ast) \ \d \mathcal{H}^1
= \int_{\Omega \cap (B_r(a_1) \cup B_r(a_2))} \nabla h \cdot \nabla (\widehat{\phir}_0 -\phir_\ast) \ \d x.
\end{align*}
By H\"older's inequality, we deduce that
\begin{align*}
& \lv \int_{\Omega \setminus (B_r(a_1) \cup B_r(a_2))} \nabla \phir_\ast \cdot \nabla(\widehat{\phir}_0-\phir_\ast) \ \d x \rv
\\
& \qquad \leqslant \lV \nabla h \rV_{L^3(\Omega \cap (B_r(a_1) \cup B_r(a_2)))} \lV \nabla (\widehat{\phir}_0-\phir_\ast) \rV_{L^{3/2}(\Omega)} \rightarrow 0 \quad \textrm{as } r \rightarrow 0.
\end{align*}
As a consequence,
\begin{equation*}
0 \geqslant \liminf\limits_{r \rightarrow 0} \int_{\Omega \setminus (B_r(a_1) \cup B_r(a_2))} \left( \vert \nabla \widehat{\phir}_0 \vert^2 - \vert \nabla \phir_\ast \vert^2 \right) \d x = \liminf\limits_{r \rightarrow 0} \int_{\Omega \setminus (B_r(a_1) \cup B_r(a_2))} \vert \nabla (\widehat{\phir}_0-\phir_\ast) \vert^2 \d x.
\end{equation*}
We deduce that~$\widehat{\phir}_0 = \phir_\ast + c$ for some constant~$c \in \R$, but~$\widehat{\phir}_0 = \phir_\ast$ on~$\dr \Omega$, hence~$c=0$ and~$\widehat{\phir}_0 = \phir_\ast$ is harmonic in~$\Omega$.
\end{proof}

\section{Three-dimensional model for maps $m_h \colon \Omega_h \subset \R^3 \rightarrow \mathbb{S}^2$}
\label{SECTION_3D_ENERGY}

\subsection[Reduction from the 3D model to a reduced 2D model]{Reduction from the three-dimensional model to a reduced two-dimensional model}
\label{subsection2.2.2}

We begin by relating the three-dimensional energy $E_h(m_h)$ given at \eqref{DEF_Eh} with the corresponding energy in the absence of the Dzyaloshinskii-Moriya interaction:
\begin{align}
\label{Eh-zero}
E_h^0(m_h)
& = \frac{1}{\lv \log \epsilon \rv}
\left( \frac{1}{h} \int_{\Omega_h} \lv \nabla m_h \rv^2 \d x
+ \frac{1}{h \eta^2} \int_{\R^3} \lv \nabla u_h \rv^2 \d x \right)
\end{align}
defined for maps~$m_h \colon \Omega_h =\Omega \times (0,h) \rightarrow \mathbb{S}^2$ and~$u_h \colon \mathbb{R}^3 \rightarrow \mathbb{R}$ satisfying~\eqref{EQ_Maxwell_u_h}.
This latter energy has been studied by Ignat and Kurzke~\cite{IK22}, hence the next lemma will allow us to use in this section the statements they obtained on~$E_h^0(m_h)$.

\begin{lemma}
\label{lemma_Enh0_bounded}
Let~$\Omega_h=\Omega \times (0,h)$ with~$\Omega \subset \R^2$ a bounded, simply connected~$C^{1,1}$ domain. Assume that~$\frac{1}{\eta^2}\vert \widehat{D} \vert=O_h( \sqrt{\vert \log \epsilon \vert})$ and consider a family of magnetizations~$\{ m_h \colon \Omega_h \rightarrow \mathbb{S}^2 \}_{h \downarrow 0}$ that satisfies
\begin{equation*}
\limsup\limits_{h \rightarrow 0} E_h(m_h) < +\infty.
\end{equation*}
Then~$\limsup\limits_{h \rightarrow 0} E_h^0(m_h) < +\infty$.
\end{lemma}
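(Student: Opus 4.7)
The idea is that $E_h$ and $E_h^0$ differ only by the DMI term, which we absorb via Young's inequality: a small multiple of the exchange energy will be offset against half of $E_h^0$, and the remaining square of the DMI coefficient is controlled by the assumption $|\widehat{D}|/\eta^2 = O(\sqrt{|\log\epsilon|})$.

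First, I would write, by definitions of $E_h$ and $E_h^0$,
\begin{equation*}
E_h^0(m_h) = E_h(m_h) - \frac{1}{|\log\epsilon|\, h\eta^2} \int_{\Omega_h} \widehat{D} : \nabla m_h \wedge m_h \, \d x.
\end{equation*}
Since $|m_h|=1$ pointwise, the DMI density~\eqref{DEF_DMI_density} satisfies
\begin{equation*}
\left| \widehat{D} : \nabla m_h \wedge m_h \right|
= \left| \sum_{j=1}^3 \widehat{D}_j \cdot (\partial_j m_h \wedge m_h) \right|
\lesssim |\widehat{D}|\, |\nabla m_h|
\end{equation*}
by Cauchy--Schwarz, where $|\widehat{D}|$ denotes the Frobenius norm of $\widehat{D}$.

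Next, I would apply Young's inequality $ab \leqslant \tfrac{1}{2}a^2 + \tfrac{1}{2}b^2$ with $a = |\nabla m_h|/\sqrt{h}$ and $b = |\widehat{D}|/(\sqrt{h}\eta^2)$ (up to a harmless constant absorbed by a parameter $\sigma>0$ that I will choose small) to obtain pointwise
\begin{equation*}
\frac{|\widehat{D}|}{h\eta^2} |\nabla m_h|
\leqslant \frac{1}{2h} |\nabla m_h|^2 + C\, \frac{|\widehat{D}|^2}{h\eta^4}.
\end{equation*}
Integrating over $\Omega_h$ (of volume $h|\Omega|$) and dividing by $|\log\epsilon|$, this gives
\begin{equation*}
\frac{1}{|\log\epsilon|\, h\eta^2} \left| \int_{\Omega_h} \widehat{D} : \nabla m_h \wedge m_h \, \d x \right|
\leqslant \frac{1}{2}\, E_h^0(m_h) + \frac{C |\Omega|}{|\log\epsilon|} \cdot \frac{|\widehat{D}|^2}{\eta^4}.
\end{equation*}
By the assumption $|\widehat{D}|/\eta^2 = O(\sqrt{|\log\epsilon|})$, the last term is $O(1)$ as $h \rightarrow 0$.

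Combining the two displayed identities, one gets
\begin{equation*}
E_h^0(m_h) \leqslant E_h(m_h) + \frac{1}{2}\, E_h^0(m_h) + O(1),
\end{equation*}
hence $E_h^0(m_h) \leqslant 2\, E_h(m_h) + O(1)$, and the bounded-energy hypothesis on $(E_h(m_h))_h$ yields the conclusion. The only subtle step is choosing the Young's constants so that the exchange contribution is absorbed by half of $E_h^0$; there is no genuine obstacle since the assumption $|\widehat{D}|/\eta^2 = O(\sqrt{|\log\epsilon|})$ is exactly calibrated to make $|\widehat{D}|^2/(\eta^4|\log\epsilon|) = O(1)$, i.e.\ the leftover term after Young's inequality is automatically of order one.
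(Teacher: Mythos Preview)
Your proof is correct and follows essentially the same approach as the paper: bound the DMI density by $|\widehat{D}|\,|\nabla m_h|$ using $|m_h|=1$, apply Young's inequality to absorb the gradient term into $\tfrac{1}{2}E_h^0(m_h)$, and control the leftover $|\widehat{D}|^2/(\eta^4|\log\epsilon|)$ via the hypothesis. The paper arrives at the identical inequality $E_h^0(m_h)\leqslant 2E_h(m_h)+\frac{|\Omega|}{|\log\epsilon|}\bigl(\frac{|\widehat{D}|}{\eta^2}\bigr)^2$; your parenthetical about a parameter $\sigma$ is unnecessary since the constant $\tfrac12$ already works.
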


\begin{proof}
Using~$\lv m_h \rv = 1$ and Young's inequality, we obtain
\begin{align*}
\lv \log \epsilon \rv \lv E_h(m_h) - E_h^0(m_h) \rv
& = \frac{1}{h\eta^2} \lv \int_{\Omega_h} \widehat{D} : \nabla m_h \wedge m_h \ \d x \rv
 \leqslant \frac{1}{h\eta^2} \int_{\Omega_h} \vert \widehat{D} \vert \lv \nabla m_h \rv \d x
\\
& \leqslant \frac{1}{2h} \int_{\Omega_h} \left( \lv \nabla m_h \rv^2 + \left( \frac{\vert \widehat{D} \vert}{\eta^2} \right)^2 \right) \d x
\leqslant \frac{|\log \epsilon|}{2} E_h^0(m_h)+\frac{\lv \Omega \rv}{2} \left(\frac{\vert \widehat{D} \vert}{\eta^2} \right)^2.
\end{align*}
Thus,
\begin{equation}
\label{345}
E_h^0(m_h) \leqslant 2E_h(m_h) + \frac{\lv \Omega \rv}{\lv \log \epsilon \rv} \left( \frac{\vert \widehat{D} \vert}{\eta^2} \right)^2 
\end{equation}
and the conclusion follows in the regime $\frac{\vert \widehat{D} \vert}{\eta^2} \lesssim \sqrt{|\log \epsilon|}$.
\end{proof}

As a consequence, we deduce the existence of minimisers for the energy~$E_h$:

\begin{corollary}
\label{cor:mini}
Let~$\Omega_h=\Omega \times (0,h)$ with~$\Omega \subset \R^2$ a bounded, simply connected~$C^{1,1}$ domain. Then~$E_h$ admits a minimiser over the set~$H^1(\Omega_h, \mathbb{S}^2)$ for every~$h>0$.
\end{corollary}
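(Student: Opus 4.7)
The plan is to apply the direct method of the calculus of variations with $h > 0$ (and therefore also $\eta$, $\epsilon$ and $\widehat{D}$) held fixed.

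First, I would establish coercivity of $E_h$ on $H^1(\Omega_h, \mathbb{S}^2)$. By Young's inequality applied to the DMI term exactly as in the proof of Lemma~\ref{lemma_Enh0_bounded} (keeping a factor $1/2$ instead of absorbing everything), we get
\begin{equation*}
E_h(m_h) \geqslant \frac{1}{2} E_h^0(m_h) - \frac{|\Omega|}{2|\log \epsilon|}\left(\frac{|\widehat{D}|}{\eta^2}\right)^2,
\end{equation*}
where $E_h^0$ is defined in~\eqref{Eh-zero}. In particular $E_h$ is bounded below (by a constant depending only on $h$), and any bound $E_h(m_h) \leqslant C$ yields a bound on the exchange term $\frac{1}{h|\log \epsilon|}\int_{\Omega_h}|\nabla m_h|^2\, dx$. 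Combined with $\|m_h\|_{L^\infty} = 1$, this gives a uniform $H^1(\Omega_h,\R^3)$ bound.

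Second, take a minimising sequence $(m_h^n)_n \subset H^1(\Omega_h, \mathbb{S}^2)$. By the previous step, $(m_h^n)_n$ is bounded in $H^1(\Omega_h,\R^3)$, so by Rellich--Kondrachov a subsequence satisfies $m_h^n \rightharpoonup m_h^\ast$ weakly in $H^1$, strongly in every $L^p(\Omega_h,\R^3)$ for $p \in [1,+\infty)$, and almost everywhere. The pointwise limit preserves the constraint, so $m_h^\ast \in H^1(\Omega_h, \mathbb{S}^2)$.

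Third, I would verify lower semicontinuity term by term. The exchange term is weakly lsc by convexity. The DMI term $\int_{\Omega_h} \widehat{D} : \nabla m_h^n \wedge m_h^n\, dx$ pairs a weakly $L^2$-convergent factor ($\nabla m_h^n$) with a strongly $L^2$-convergent one ($m_h^n$), hence converges to the corresponding integral for $m_h^\ast$. For the stray-field, strong $L^2$ convergence of $m_h^n\mathds{1}_{\Omega_h}$ to $m_h^\ast\mathds{1}_{\Omega_h}$ implies convergence of $\nabla \cdot (m_h^n\mathds{1}_{\Omega_h})$ in $H^{-1}(\R^3)$, and the standard continuity of the solution map of the Maxwell equation~\eqref{EQ_Maxwell_u_h} (estimated via the Fourier representation recalled in Section~\ref{INTROSECTION_dimension_reduction}) gives $\nabla u_h^n \to \nabla u_h^\ast$ strongly in $L^2(\R^3,\R^3)$, hence convergence (not merely lsc) of the stray-field energy. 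Summing the three contributions, $E_h(m_h^\ast) \leqslant \liminf_n E_h(m_h^n) = \inf_{H^1(\Omega_h,\mathbb{S}^2)} E_h$, so $m_h^\ast$ is a minimiser.

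The only mildly delicate point is the DMI term, which is sign-indefinite; however once one has the strong $L^2$ convergence of $m_h^n$ and the weak $L^2$ convergence of $\nabla m_h^n$ there is no real obstacle, so the argument is entirely routine direct-method material.
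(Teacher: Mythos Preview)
Your proposal is correct and follows essentially the same route as the paper: direct method, coercivity via the estimate from Lemma~\ref{lemma_Enh0_bounded} (the paper cites \eqref{345} directly), extraction of a weakly $H^1$/strongly $L^2$ convergent subsequence, and term-by-term lower semicontinuity. The paper is slightly terser on the stray-field term, simply invoking that the Helmholtz projection $m\in L^2(\Omega_h,\R^3)\mapsto \nabla u\in L^2(\R^3,\R^3)$ is a bounded linear operator, which is exactly the content of your continuity-of-the-solution-map argument; note however that the Fourier representation of Section~\ref{INTROSECTION_dimension_reduction} is written for $x_3$-independent magnetizations, so the cleaner justification here is the standard $L^2$-boundedness of the Helmholtz projection rather than that particular formula.
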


\begin{proof}
We use the direct method in the calculus of variations. Indeed, for fixed~$h>0$, consider a minimising sequence~$(m_{n})_{n \in \N}$ of~$E_h$ in~$H^1(\Omega_h,\mathbb{S}^2)$; by~\eqref{345},~$(m_{n})_{n \in \N}$ is bounded in~$H^1(\Omega_h, \R^3)$, in particular, for a subsequence,~$(m_{n})_n$ converges weakly in~$H^1(\Omega_h,\R^3)$, strongly in~$L^2(\Omega_h,\R^3)$ and a.e. in~$\Omega_h$. As the Helmholtz projection~$P \colon m\in L^2(\Omega_h, \R^3) \rightarrow \nabla u\in L^2(\R^3,\R^3)$ is a linear bounded operator (via~\eqref{EQ_Maxwell_u_h}), the relative compactness of~$(m_{n})_{n \in \N}$ and the lower semicontinuity of~$E_h$ in the weak~$H^1(\Omega_h,\R^3)$ topology yield the conclusion.
\end{proof}

In the next lemma, we estimate the~DMI in~$E_h(m_h)$ with respect to the~DMI corresponding to the~$x_3$-average of~$m_h$.

\begin{lemma}
\label{lemma_reduction_dmi}
Let~$\Omega_h=\Omega \times (0,h)$ with~$\Omega \subset \R^2$ a bounded, simply connected~$C^{1,1}$ domain. In the regime~$\frac{1}{\eta^2}\vert \widehat{D} \vert=O_h( \sqrt{\vert\log \epsilon\vert})$, consider a family of magnetizations~$\{ m_h \colon \Omega_h \rightarrow \mathbb{S}^2 \}_{h \downarrow 0}$ that satisfies~$\limsup\limits_{h \rightarrow 0} E_h(m_h) < +\infty$. If~$\overline{m}_h$ is the~$x_3$-average of~$m_h$ defined in~\eqref{DEF_mh_mean}, then
\begin{equation*}
\frac{1}{\lv \log \epsilon \rv} \frac{1}{h\eta^2}\int_{\Omega_h} {\widehat{D}} : \nabla m_h \wedge m_h \ \d x
= \frac{1}{\lv \log \epsilon \rv} \frac{1}{\eta^2} \int_{\Omega} {\widehat{D}}' : \nabla' \overline{m}_h \wedge \overline{m}_h \ \d x' +O(R(h)) \ \ \text{ as } h \rightarrow 0,
\end{equation*}
where~${\widehat{D}}'=(\widehat{D}_1, \widehat{D}_2) \in \R^{3\times 2}$ and 
\begin{equation}
\label{expression_R2}
R(h)=h\frac{\vert \widehat{D}_1 \vert + \vert \widehat{D}_2 \vert}{\eta^2}+\frac{\vert \widehat{D}_3 \vert}{\eta^2}\cdot \frac{1}{\sqrt{|\log \eps|}}.
\end{equation}
\end{lemma}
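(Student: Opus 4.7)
The plan is to split the DMI density into its in-plane components ($j=1,2$) and its vertical component ($j=3$), and estimate each contribution separately. The energy bound from Lemma~\ref{lemma_Enh0_bounded} gives $E_h^0(m_h)=O(1)$, i.e., $\lV \nabla m_h \rV_{L^2(\Omega_h)}^2 \leqslant C h \lv \log \epsilon \rv$, which provides the main analytic input. For $j=1,2$, the in-plane contribution should reduce to the corresponding expression for $\overline{m}_h$ with an error $O(h \lv \widehat D_j \rv / \eta^2)$; for $j=3$ there is no 2D counterpart, and we bound this term crudely, producing the error $O(\lv \widehat D_3 \rv / (\eta^2 \sqrt{\lv \log \epsilon \rv}))$.

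For $j \in \{1,2\}$, the key algebraic identity is that, for any $\R^3$-valued functions $f,g$ on $(0,h)$ with $x_3$-averages $\bar f, \bar g$,
\begin{equation*}
\frac{1}{h}\int_0^h f \wedge g \ \d x_3 - \bar f \wedge \bar g
= \frac{1}{h}\int_0^h (f-\bar f)\wedge(g-\bar g) \ \d x_3,
\end{equation*}
which follows by expanding the right-hand side and using that $\overline{f-\bar f}=0$. Applying this with $f=\partial_j m_h$ (so that $\bar f=\partial_j \overline{m}_h$) and $g=m_h$, and integrating over $x' \in \Omega$, we obtain the identity
\begin{equation*}
\frac{1}{h}\int_{\Omega_h} \widehat D_j \cdot \partial_j m_h \wedge m_h \ \d x
-\int_\Omega \widehat D_j \cdot \partial_j \overline{m}_h \wedge \overline{m}_h \ \d x'
=\frac{1}{h}\int_{\Omega_h} \widehat D_j \cdot (\partial_j m_h-\partial_j \overline{m}_h)\wedge (m_h-\overline{m}_h) \ \d x.
\end{equation*}
Cauchy-Schwarz bounds the right-hand side by $\frac{\lv \widehat D_j \rv}{h}\lV \partial_j m_h-\partial_j\overline{m}_h \rV_{L^2(\Omega_h)} \lV m_h-\overline{m}_h \rV_{L^2(\Omega_h)}$, and then the Poincaré-Wirtinger inequality in the thin direction yields $\lV m_h-\overline{m}_h \rV_{L^2(\Omega_h)} \lesssim h\lV \partial_3 m_h \rV_{L^2(\Omega_h)}$, while $\lV \partial_j m_h - \partial_j\overline{m}_h \rV_{L^2(\Omega_h)} \leqslant \lV \partial_j m_h \rV_{L^2(\Omega_h)}$. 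Combining with $\lV \nabla m_h \rV_{L^2(\Omega_h)}^2 \leqslant Ch\lv\log\epsilon\rv$, the difference is $O(h\lv \widehat D_j \rv \lv \log \epsilon \rv)$, which after dividing by $\eta^2\lv \log \epsilon \rv$ gives the first term of $R(h)$.

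For $j=3$, there is no in-plane term to compare against, so we estimate directly:
\begin{equation*}
\frac{1}{h}\left|\int_{\Omega_h} \widehat D_3 \cdot \partial_3 m_h \wedge m_h \ \d x\right|
\leqslant \frac{\lv \widehat D_3 \rv}{h} \lV \partial_3 m_h \rV_{L^2(\Omega_h)} \lV m_h \rV_{L^2(\Omega_h)}
\lesssim \frac{\lv \widehat D_3 \rv}{h}\sqrt{h\lv \log \epsilon\rv}\cdot \sqrt{h}=\lv \widehat D_3 \rv \sqrt{\lv \log \epsilon \rv},
\end{equation*}
using $\lv m_h \rv=1$ and the Dirichlet bound; after dividing by $\eta^2\lv \log \epsilon \rv$ this yields the second term of $R(h)$. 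Summing the three contributions and noting that $\widehat D':\nabla'\overline{m}_h\wedge \overline{m}_h = \widehat D_1\cdot \partial_1\overline{m}_h\wedge \overline{m}_h + \widehat D_2\cdot \partial_2\overline{m}_h\wedge \overline{m}_h$ concludes the proof. The only genuinely delicate point is the $j=3$ case, where the vertical derivative cannot be recovered from any 2D quantity; the smallness there is purchased entirely by the factor $1/\sqrt{\lv \log \epsilon \rv}$ coming from comparing $\lV \partial_3 m_h \rV_{L^2}^2/h$ with the ambient energy scale $\lv \log \epsilon \rv$, which is precisely why the regime hypothesis on $\lv \widehat D_3 \rv/\eta^2$ enters.
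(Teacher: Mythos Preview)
Your proof is correct and follows essentially the same approach as the paper: split into in-plane ($j=1,2$) and vertical ($j=3$) contributions, use Cauchy--Schwarz together with the Poincar\'e--Wirtinger inequality in $x_3$ for the in-plane error, and bound the $j=3$ term directly via $|m_h|=1$ and the Dirichlet bound $\lV\nabla m_h\rV_{L^2(\Omega_h)}^2\leqslant Ch\lv\log\epsilon\rv$ coming from Lemma~\ref{lemma_Enh0_bounded}. The only cosmetic difference is in the in-plane decomposition: the paper writes $\partial_j m_h\wedge m_h=\partial_j m_h\wedge\overline{m}_h+\partial_j m_h\wedge(m_h-\overline{m}_h)$ and evaluates the first piece by Fubini, whereas you use the covariance-type identity $\overline{f\wedge g}-\bar f\wedge\bar g=\overline{(f-\bar f)\wedge(g-\bar g)}$; both lead to the same error term bounded by $\lv\widehat D_j\rv\,\lV\partial_j m_h\rV_{L^2}\cdot h\lV\partial_3 m_h\rV_{L^2}/h$.
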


\begin{proof}
Writing
\begin{equation*}
{\widehat{D}} : \nabla m_h \wedge m_h
= {\widehat{D}}' : \nabla' m_h \wedge \overline{m}_h
+ {\widehat{D}}' : \nabla' m_h \wedge (m_h - \overline{m}_h)
+ {\widehat{D}}_3 \cdot \dr_3 m_h \wedge m_h,
\end{equation*}
we obtain the decomposition
\begin{align*}
\int_{\Omega_h} {\widehat{D}} : \nabla m_h \wedge m_h \ \d x
=I_1+I_2+I_3
\end{align*}
with
\begin{equation*}
I_1 = \int_{\Omega_h} {\widehat{D}}' : \nabla' m_h \wedge \overline{m}_h \ \d x
= \sum_{j=1}^2 \int_{\Omega_h} {\widehat{D}}_j \cdot \dr_j m_h \wedge \overline{m}_h \ \d x,
\end{equation*}
\begin{equation*}
I_2 = \int_{\Omega_h} {\widehat{D}}' : \nabla' m_h \wedge (m_h - \overline{m}_h) \ \d x
= \sum_{j=1}^2 \int_{\Omega_h} {\widehat{D}}_j \cdot \dr_j m_h \wedge (m_h - \overline{m}_h) \ \d x,
\end{equation*}
and
\begin{equation*}
I_3 = \int_{\Omega_h} {\widehat{D}}_3 \cdot \dr_3 m_h \wedge m_h \ \d x.
\end{equation*}
For calculating~$I_1$, by Fubini's theorem, we have 
\begin{align*}
I_1
 & = \sum_{j=1}^2 \int_\Omega \int_0^h \left( {\widehat{D}}_j \cdot \dr_j m_h(x',x_3) \wedge \overline{m}_h(x') \right) \d x_3 \d x'\\
&= \sum_{j=1}^2 \int_\Omega {\widehat{D}}_j \cdot \dr_j\left( \int_0^h m_h(x',x_3) \d x_3 \right) \wedge \overline{m}_h(x') \d x'
 = h \sum_{j=1}^2 \int_\Omega {\widehat{D}}_j \cdot \dr_j \overline{m}_h \wedge \overline{m}_h \ \d x'.
\end{align*}
For~$I_2$, by the Cauchy-Schwarz and Poincar\'e-Wirtinger inequalities, we have
\begin{align*}
\lv I_2 \rv
& \leqslant \sum_{j=1}^2 \int_{\Omega_h} \lv {\widehat{D}}_j \cdot \dr_j m_h \wedge (m_h - \overline{m}_h) \rv \d x
\\
& \leqslant \sum_{j=1}^2 \vert {\widehat{D}}_j \vert \left( \int_{\Omega_h} \lv \dr_j m_h \rv^2 \d x \right)^{1/2} \left( \int_{\Omega_h} \lv m_h - \overline{m}_h \rv^2 \d x \right)^{1/2}
\\
& \leqslant h \sum_{j=1}^2 \vert {\widehat{D}}_j \vert \left( \int_{\Omega_h} \lv \dr_j m_h \rv^2 \d x \right)^{1/2} \left( \int_{\Omega_h} \lv \dr_3 m_h \rv^2 \d x \right)^{1/2}\\
& \leqslant h \sum_{j=1}^2 \vert {\widehat{D}}_j \vert \int_{\Omega_h} \lv \nabla m_h \rv^2 \d x
\leqslant  h^2 \left( \vert {\widehat{D}}_1 \vert + \vert {\widehat{D}}_2 \vert \right) \lv \log \epsilon \rv E_h^0(m_h).
\end{align*}
Finally, for~$I_3$, since~$\lv m_h \rv = 1$, the Cauchy-Schwarz inequality yields
\begin{align*}
\lv I_3 \rv
& \leqslant \int_{\Omega_h} \vert {\widehat{D}}_3 \cdot \dr_3 m_h \wedge m_h \vert \d x
  \leqslant \sqrt{|\Omega|} \vert {\widehat{D}}_3 \vert \sqrt{h} \left( \int_{\Omega_h} \lv \dr_3 m_h \rv^2 \d x \right)^{1/2}
\leqslant \sqrt{|\Omega|} \vert {\widehat{D}}_3 \vert h \sqrt{\lv \log \epsilon \rv E_h^0(m_h)},
\end{align*}
where~$E_h^0$ is defined in~\eqref{Eh-zero}.
Combining the above estimates, we obtain
\begin{align*}
&
\frac{1}{\lv \log \epsilon \rv} \frac{1}{h\eta^2}\left|\int_{\Omega_h} {\widehat{D}} : \nabla m_h \wedge m_h \ \d x
-h \int_{\Omega} {\widehat{D}}' : \nabla' \overline{m}_h \wedge \overline{m}_h \ \d x'\right|
\\
& \qquad \leqslant h \frac{\vert {\widehat{D}}_1 \vert + \vert {\widehat{D}}_2 \vert}{\eta^2} E_h^0(m_h) +\sqrt{|\Omega|} \frac{\vert {\widehat{D}}_3 \vert}{\eta^2} \sqrt{\frac{E_h^0(m_h)}{\lv \log \epsilon \rv}}.
\end{align*}
Since~$\limsup\limits_{h \rightarrow 0} E_h^0(m_h)<+\infty$ by Lemma~\ref{lemma_Enh0_bounded}, the conclusion follows. 
\end{proof}

We have now all the ingredients to prove Theorem~\ref{THM_reduction3D2D}.

\begin{proof}[Proof of Theorem~\ref{THM_reduction3D2D}]
As $\frac{1}{\eta^2}\vert \widehat{D} \vert=O_h(\sqrt{\vert \log \epsilon \vert})$, by Lemma~\ref{lemma_reduction_dmi}, we have
\begin{align*}
E_h(m_h) &= E_h^0(m_h) + \frac{1}{\lv \log \epsilon \rv} \frac{1}{h\eta^2} \int_{\Omega_h} {\widehat{D}} : \nabla m_h \wedge m_h \ \d x\\
&= E_h^0(m_h) + \frac{1}{\lv \log \epsilon \rv} \frac{1}{\eta^2} \int_{\Omega} {\widehat{D}}' : \nabla' \overline{m}_h \wedge \overline{m}_h \ \d x'
-O(R(h))
\end{align*}
where~$E_h^0(m_h)$ is given in~\eqref{Eh-zero}. By~\eqref{expression_R2}, we have that~$O(R(h))=o_h(1)$ in the regime~\eqref{DEF_regime3D} and~$O(R(h))=o_h(1/\vert \log \epsilon \vert)$ in the regime~\eqref{DEF_regime3D_log} because~$h\ll \frac{1}{\vert \log h \vert}\ll \epsilon \ll \frac{1}{\vert \log \epsilon \vert}$ in the regime~\eqref{DEF_regime3D}.
\vspace{.1cm}

\textit{Step~1: Estimating~$E_h^0(m_h)$.} We denote by~$\overline{m}_h=(\overline{m}_h',\overline{m}_{h,3})$ the~$x_3$-average of~$m_h$ defined in~\eqref{DEF_mh_mean}. By~\cite[Theorem 1 and Inequality (1.11)]{IK22}, we have in the absence of DMI:
\begin{align*}
E_h^0(m_h) \geqslant \frac{1}{\vert \log \epsilon \vert} \left(E^0_{\eps, \eta}(\overline{m}_h')+\int_\Omega \vert\nabla' \overline{m}_{h,3}\vert^2\, dx'\right)-o_h(R_1(h))
\end{align*}
where~$R_1(h)=1$ in the regime~\eqref{DEF_regime3D} and~$R_1(h)=1/ \vert\log\epsilon\vert$ in the regime~\eqref{DEF_regime3D_log}. Moreover, if~$m_h$ is independent of~$x_3$ (i.e.,~$m_h=\overline{m}_h$) and~$m_{h,3}=0$ (i.e.,~$m_h=(m'_h,0) \colon \Omega \rightarrow \mathbb{S}^1\times \{0\}$), then the above inequality becomes equality.
\vspace{.1cm}

\textit{Step 2: Estimating the DMI term.} Since~$\limsup\limits_{h \rightarrow 0} E_h^0(m_h)<+\infty$ by Lemma~\ref{lemma_Enh0_bounded}, Step~1 implies that~$\displaystyle \int_\Omega \vert\nabla'\overline{m}_h\vert^2 \d x'=O(\vert\log\epsilon\vert)$. We claim that
\begin{equation}
\label{789}
\frac{1}{\vert\log\epsilon\vert} \int_\Omega \left( \frac{1}{\eta^2}\widehat{D}' : \nabla' \overline{m}_h \wedge \overline{m}_h - 2\delta \cdot \nabla' \overline{m}_h' \wedge \overline{m}_h' \right) \d x=o_h(R_1(h)).
\end{equation}
As~$\vert \overline{m}_h \vert \leqslant 1$, the term involving~$\widehat{D}_{11}$ is estimated using the Cauchy-Schwarz inequality:
\begin{equation*}
\left\vert \int_\Omega \frac{\widehat{D}_{11}}{\eta^2}  \left( \overline{m}_{h,3} \dr_1 \overline{m}_{h,2} - \overline{m}_{h,2} \dr_1 \overline{m}_{h,3} \right) \d x' \right\vert
\leqslant
\frac{\vert\widehat{D}_{11}\vert}{\eta^2} \int_\Omega \vert\nabla'\overline{m}_{h}\vert \d x'
=\frac{\vert\widehat{D}_{11}\vert}{\eta^2}O(\sqrt{\vert\log\epsilon\vert}).
\end{equation*}
Similar estimates hold for the~DMI terms involving~$\widehat{D}_{12}$,~$\widehat{D}_{21}$ and~$\widehat{D}_{22}$. For the term involving~$\widehat{D}_{k3}$ with~$k=1,2$, we have by the Cauchy-Schwarz inequality: 
\begin{equation*}
\left\vert \int_\Omega \left(\frac{\widehat{D}_{k3}}{\eta^2}-2\delta_k \right) \partial_k \overline{m}'_h \wedge \overline{m}'_h \ \d x' \right\vert
\leqslant \left\vert \frac{\widehat{D}_{k3}}{\eta^2}-2\delta_k \right\vert \int_\Omega \vert\nabla' \overline{m}_{h}\vert \d x'
= \left\vert \frac{\widehat{D}_{k3}}{\eta^2}-2\delta_k\right\vert O(\sqrt{\vert\log\epsilon\vert}).
\end{equation*}
The choice of our regimes~\eqref{DEF_regime3D} and~\eqref{DEF_regime3D_log} yields the claim~\eqref{789}.
\end{proof}

\subsection{Gamma-convergence of the three-dimensional energy}
\label{subsection2.2.3}

In this section, we prove the~$\Gamma$-convergence for~$E_h(m_h)$, i.e. Theorems~\ref{THM_9_order1_IK22+DMI},~\ref{THM_9_order2_IK22+DMI},~\ref{THM_10_IK22+DMI} and Corollary~\ref{COR_11_IK22+DMI}.
Recall that the regime~\eqref{DEF_regime3D} (and also~\eqref{DEF_regime3D_log}) implies regime~\eqref{DEF_regime2D}, see Footnote~\ref{foot}.

\begin{proof}[Proof of Theorem~\ref{THM_9_order1_IK22+DMI}]
Let~$\lb m_h \colon \Omega_h \rightarrow \mathbb{S}^2 \rb_{h \downarrow 0}$ be a family of magnetizations such \linebreak that~$\limsup_{h \rightarrow 0} E_h(m_h) <\infty$. Denoting~$\overline{m}_h=(\overline{m}'_h, \overline{m}_{h,3}) \colon \Omega \rightarrow \R^3$ the~$x_3$-average of~$m_h$ given in~\eqref{DEF_mh_mean}, we have by Theorem~\ref{THM_reduction3D2D} in the regime~\eqref{DEF_regime3D},
\begin{equation*}
\limsup\limits_{\epsilon \rightarrow 0} \frac{1}{\lv \log \epsilon \rv} E_{\epsilon,\eta}^\delta(\overline{m}'_h) \leqslant \limsup\limits_{h \rightarrow 0} E_h(m_h)<\infty.
\end{equation*}
As~$\lv \log \epsilon \rv \ll \lv \log \eta \rv$, we can apply Theorem~\ref{THM_1.2_IK21+DMI} to~$v_\epsilon:= \overline{m}'_h \colon \Omega \rightarrow \R^2$. More precisely, by Theorem~\ref{THM_1.2_IK21+DMI}\textit{(i)}, for a subsequence,~$(\mathcal{J}(\overline{m}'_h))$ converges (in the sense of~\eqref{EQ_convergence_jacobian_2D}) to the measure~$J$ given in~\eqref{EQ_limit_jacobian_2D}.
Moreover, for a subsequence,~$(\overline{m}'_h \vert_{\dr\Omega})$ converges to~$e^{i\phir_0} \in BV(\dr\Omega,\mathbb{S}^1)$ in~$L^p(\dr\Omega)$, for every~$p \in [1,+\infty)$, where~$\phir_0 \in BV(\dr\Omega,\pi\Z)$ is a lifting of the tangent field~$\pm\tau$ on~$\dr\Omega$ determined (up to a constant in~$\pi\Z$) by~$\dr_\tau\phir_0 = -J$ as measure on~$\dr\Omega$. 
Since~$\overline{m}_{h,3}^2 \leqslant 1 - \lv \overline{m}'_h \rv^2$ and~$(\overline{m}'_h)_h$ converges to~$e^{i\phir_0}$ in~$L^2(\partial \Omega,\R^2)$ with~$\vert e^{i\phir_0} \vert=1$, we deduce that~$(\overline{m}_{h,3})_h$ converges to zero in~$L^2(\dr\Omega)$ and almost everywhere on~$\dr\Omega$ (up to a subsequence). As~$\lv \overline{m}_{h,3} \rv \leqslant 1$, by dominated convergence theorem, we get that~$(\overline{m}_{h,3})_h$ converges to zero in~$L^p(\dr\Omega)$ for every~$p \in [1,+\infty)$.
For proving~\textit{(ii)}, we apply Theorem~\ref{THM_1.2_IK21+DMI}\textit{(ii)} and Theorem~\ref{THM_reduction3D2D} to get
\begin{align*}
\pi \sum_{j=1}^N \lv d_j \rv
& \leqslant \liminf\limits_{h \rightarrow 0} \frac{1}{\lv \log \epsilon \rv} E_{\epsilon,\eta}^\delta(\overline{m}'_h)
\leqslant \liminf\limits_{h \rightarrow 0} E_h(m_h).
\end{align*}
\end{proof}

\begin{proof}[Proof of Theorem~\ref{THM_9_order2_IK22+DMI}]
By~\eqref{EQ_limsup_order2_3D} and Theorem~\ref{THM_reduction3D2D} in the regime~\eqref{DEF_regime3D_log}, we have
\begin{align*}
\infty
& > \limsup\limits_{h \rightarrow 0} \lv \log \epsilon \rv \left( E_h(m_h) - \pi \sum_{j=1}^N \lv d_j \rv \right)
\geqslant \limsup\limits_{h \rightarrow 0} \left( E_{\epsilon,\eta}^\delta(\overline{m}'_h) - \pi \lv \log \epsilon \rv \sum_{j=1}^N \lv d_j \rv \right).
\end{align*}
As~$\lv \log \epsilon \rv \ll \lv \log \eta \rv$, we can apply Theorem~\ref{THM_1.4_IK21+DMI} to~$v_\epsilon:= \overline{m}'_h$.
More precisely, by Theorem~\ref{THM_1.4_IK21+DMI}\textit{(i)}, we have~$d_j \in \lb \pm 1 \rb$ for every~$j \in \lb 1,...,N \rb$, so that~$\sum_{j=1}^N \lv d_j \rv = N$, and
\begin{align*}
W_\Omega^\delta(\lb(a_j,d_j)\rb) + N\gamma_0
& \leqslant \liminf\limits_{h \rightarrow 0} \left( E_{\epsilon,\eta}^\delta(\overline{m}'_h) - N\pi \lv \log \epsilon \rv \right)
\leqslant \liminf\limits_{h \rightarrow 0} \lv \log \epsilon \rv (E_h(m_h)-N\pi).
\end{align*}
Let us prove~\textit{(ii)}. By the proof of Theorem~\ref{THM_reduction3D2D}, we have
\begin{align*}
E_h(m_h)
& = E_h^0(m_h) + \frac{1}{\lv \log \epsilon \rv} \frac{1}{\eta^2} \int_{\Omega} {\widehat{D}}' : \nabla' \overline{m}_h \wedge \overline{m}_h \ \d x'
-O(R(h))
\\
& = E_h^0(m_h) + \frac{1}{\lv \log \epsilon \rv} \int_{\Omega} 2\delta \cdot \nabla' \overline{m}_h' \wedge \overline{m}_h' \ \d x-o_h(R_1(h))-O(R(h))
\end{align*}
where~$o_h(R_1(h))+O(R(h))=o_h(\frac{1}{\lv \log \epsilon \rv})$ in the regime~\eqref{DEF_regime3D_log}. By~\eqref{limsup_DMI_term},~$\int_{\Omega} 2\delta \cdot \nabla' \overline{m}_h' \wedge \overline{m}_h' \ \d x=O(1)$. By~\eqref{EQ_limsup_order2_3D}, we deduce that
\begin{equation*}
\limsup\limits_{h \rightarrow 0} \lv \log \epsilon \rv  \left( E_h^0(m_h)-N\pi \right)
\leqslant \limsup\limits_{h \rightarrow 0} \lv \log \epsilon \rv \left( E_h(m_h)-N\pi \right)+ O(1)<\infty.
\end{equation*}
Hence, we can apply~\cite[Theorem~9\textit{(iv)}]{IK22} and we obtain the expected compactness of~$(m_h)_h$ and properties of their limit points.
\end{proof}

\begin{proof}[Proof of Theorem~\ref{THM_10_IK22+DMI}]
In the regime~\eqref{DEF_regime3D}, we have~$\vert\log\epsilon\vert \ll \vert\log\eta\vert$ and we construct the family~$\{v_\epsilon \colon \Omega \rightarrow \mathbb{S}^1 \}$ as in Theorem~\ref{THM_1.5_IK21+DMI}. Set~$m_h \colon (x',x_3)\in \Omega_h \mapsto (v_\epsilon(x'),0)\in \mathbb{S}^1 \times \{0\}$.
For every~$h>0$,~$m_h$ is clearly independent of~$x_3$, thus~$m_h = \overline{m}_h$ and by Theorem~\ref{THM_1.5_IK21+DMI}, it follows that the global Jacobian~$\mathcal{J}(m'_h) = \mathcal{J}(v_\epsilon)$ converges (in the sense~\eqref{EQ_convergence_jacobian_2D}) to the measure~$J$ given in \eqref{EQ_limit_jacobian_2D}. Also,~$(m'_h)_h$ converges strongly to~$e^{i\phir_\ast}$ in~$L^p(\Omega,\R^2)$ and in~$L^p(\partial \Omega,\R^2)$, for every~$p \in [1,+\infty)$, where~$\phir_\ast$ is the harmonic extension to~$\Omega$ of a boundary lifting~$\phir_0$ of the tangent field~$\pm\tau$ on~$\partial\Omega$ that satisfies~$\partial_\tau \phir_0 = -J$ as measure on~$\partial \Omega$. By Theorem~\ref{THM_reduction3D2D} in the regime~\eqref{DEF_regime3D} combined with Theorem~\ref{THM_1.5_IK21+DMI}, we have
\begin{equation*}
E_h(m_h)
= \frac{1}{\vert\log\epsilon\vert} E_{\epsilon,\eta}^\delta(v_\epsilon) - o_\epsilon(1)
= \pi \sum_{j=1}^N \vert d_j \vert - o_\epsilon(1).
\end{equation*}
Furthermore, if~$d_j \in \lb \pm 1 \rb$ for every~$j \in \lb 1,...,N \rb$, then by Theorem~\ref{THM_reduction3D2D} in the regime~\eqref{DEF_regime3D_log} and Theorem~\ref{THM_1.5_IK21+DMI}, 
\begin{align*}
\vert\log\epsilon\vert \left( E_h(m_h)-N\pi \right)
& = \vert\log\epsilon\vert \left(\frac{1}{\vert\log\epsilon\vert} E_{\epsilon,\eta}^\delta(v_\epsilon) - N \pi - o\left(\frac{1}{\vert\log\epsilon\vert}\right) \right)
\\
&  =W_\Omega^\delta(\lb(a_j,d_j)\rb)+N\gamma_0-o_h(1).
\end{align*}
\end{proof}

\begin{proof}[Proof of Corollary~\ref{COR_11_IK22+DMI}]
By Corollary~\ref{COR_7_IK22+DMI}, there exist two points~$a_1^\ast \neq a_2^\ast \in \dr\Omega$ such that
\begin{equation*}
W_\Omega^\delta(\lb(a_1^\ast,1),(a_2^\ast,1)\rb) = \min \lb W_\Omega^\delta(\lb(\tilde{a}_1,1),(\tilde{a}_2,1)\rb) : \tilde{a}_1 \neq \tilde{a}_2 \in \dr\Omega \rb.
\end{equation*}
Let~$(m_h)_h$ be a family of minimisers of~$E_h$ on~$H^1(\Omega_h,\R^3)$ (such a family exists by Corollary \ref{cor:mini}). In the regime~\eqref{DEF_regime3D}, by Theorem~\ref{THM_10_IK22+DMI} applied to~$\lb(a_1^\ast,1),(a_2^\ast,1)\rb$, the minimisers~$m_h$ must satisfy
\begin{equation}
\label{eq1_cor1.6_3D}
\limsup\limits_{h \rightarrow 0} E_h(m_h) \leqslant 2\pi.
\end{equation}
Hence, we can apply Theorem~\ref{THM_9_order1_IK22+DMI}\textit{(i)} and we deduce that, for a subsequence, the global Jacobian~$(\mathcal{J}(\overline{m}_h'))_h$ converges as in~\eqref{EQ_convergence_jacobian_3D} to
\begin{equation*}
J = -\kappa \mathcal{H}^1 \llcorner \dr\Omega + \sum_{j=1}^N d_j\Dirac_{a_j}
\end{equation*}
for~$N \geqslant 1$ distinct boundary points~$a_1,...,a_N \in \dr\Omega$, with~$d_1,...,d_N \in \Z \setminus \lb 0 \rb$ such that~$\sum_{j=1}^N d_j = 2$. Moreover, by Theorem~\ref{THM_9_order1_IK22+DMI}\textit{(ii)}, we also have
\begin{equation}
\label{eq2_cor1.6_3D}
\liminf\limits_{h \rightarrow 0} E_h(m_h) \geqslant \pi \sum_{j=1}^N \vert d_j \vert.
\end{equation}
Combining~\eqref{eq1_cor1.6_3D} and~\eqref{eq2_cor1.6_3D}, we get~$\sum_{j=1}^N \vert d_j \vert \leqslant 2=\sum_{j=1}^N d_j$, hence~$\sum_{j=1}^N (\vert d_j \vert - d_j ) \leqslant 0$ so that for every $j \in \lb 1,...,N \rb$, $\vert d_j \vert=d_j$. It follows that
\begin{equation*}
\lim\limits_{h \rightarrow 0} E_h(m_h) = 2\pi,
\end{equation*}
and two cases can occur: either~$N=1$ and~$d_1=2$, or~$N=2$ and~$d_1=d_2=1$. Hence, there are two boundary points~$a_1,a_2 \in \dr\Omega$ (that might a-priori  coincide) such that
\begin{equation*}
J=-\kappa \mathcal{H}^1 \llcorner \dr\Omega + \pi(\Dirac_{a_1}+\Dirac_{a_2}).
\end{equation*}
We now assume that the regime~\eqref{DEF_regime3D_log} holds. By Theorem~\ref{THM_10_IK22+DMI}, we necessarily have
\begin{equation}
\label{eq3_cor1.6_3D}
\vert \log \epsilon \vert \left( E_h(m_h)-2\pi \right)
\leqslant W_\Omega^\delta(\lb(a_1^\ast,1),(a_2^\ast,1) \rb) + 2\gamma_0 + o_\epsilon(1) \ \ \text{ as } h \rightarrow 0.
\end{equation}
Hence, we can apply Theorem~\ref{THM_9_order2_IK22+DMI}\textit{(i)} and we deduce that~$d_1=d_2=1$,~$N=2$ (thus~$a_1 \neq a_2$) and
\begin{equation}
\label{eq4_cor1.6_3D}
\vert \log \epsilon \vert \left( E_h(m_h)-2\pi \right)
\geqslant W_\Omega^\delta(\lb(a_1,1),(a_2,1)\rb) + 2\gamma_0 + o_\epsilon(1) \ \ \text{ as } h \rightarrow 0.
\end{equation}
Combining~\eqref{eq3_cor1.6_3D} and~\eqref{eq4_cor1.6_3D}, we deduce that
\begin{equation*}
W_\Omega^\delta(\lb(a_1,1),(a_2,1)\rb)
\leqslant W_\Omega^\delta(\lb(a_1^\ast,1),(a_2^\ast,1) \rb).
\end{equation*}
By definition of~$W_\Omega^\delta(\lb(a_1^\ast,1),(a_2^\ast,1) \rb)$, we have
\begin{equation*}
W_\Omega^\delta(\lb(a_1,1),(a_2,1)\rb)
= W_\Omega^\delta(\lb(a_1^\ast,1),(a_2^\ast,1) \rb),
\end{equation*}
and thus
\begin{equation*}
\lim\limits_{\epsilon \rightarrow 0} \vert \log \epsilon \vert \left( E_h(m_h) - 2\pi \right) = W_\Omega^\delta(\lb(a_1,1),(a_2,1)\rb) + 2\gamma_0.
\end{equation*}
\end{proof}

\begin{appendices}

\section{Energy estimates near boundary vortices}
\label{SUBSECTION_energy_estimates_near_boundary_vortices}

For the second order term in the asymptotic expansion of the energy by~$\Gamma$-convergence, we need to estimate the energy~$\mathcal{G}_\epsilon^\delta$ defined in~\eqref{DEF_Gepsdelta} in a neighborhood of the boundary vortices. Up to use a conformal map and make a blow-up near a boundary vortex, we can consider the following localised functional.

\begin{notation}
Let~$\delta \in \R^2$ and~$\epsilon>0$. For any open set~$G \subset \mathbb{R}_+^2=\mathbb{R}\times(0,+\infty)$ and~$\psi \colon G \rightarrow \mathbb{R}$, we define the localised functional
\begin{equation}
\label{DEF_Fepsdelta}
F_{\epsilon}^\delta(\psi;G)
= \int_{G} \left( \left\vert \nabla \psi \right\vert^2 - 2 \delta \cdot \nabla \psi \right) \d x
+ \frac{1}{2\pi\epsilon} \int_{\overline{G} \cap (\mathbb{R} \times \left\lbrace 0 \right\rbrace)} \sin^2 \psi(x_1,0) \ \d x_1.
\end{equation}
\end{notation}

\begin{notation}
\label{NOT_phistar_phiepsstar}
For every~$(x_1,x_2) \in \mathbb{R}_+^2=\mathbb{R}\times(0,+\infty)$, we set
\begin{equation*}
\phid^\ast(x_1,x_2)
=\arg(x_1+ix_2)=\frac{\pi}{2}-\arctan \left( \frac{x_1}{x_2} \right),
\end{equation*}
and
\begin{equation*}
\phid_\epsilon^\ast(x_1,x_2)
=\arg(x_1+i(x_2+2\pi\epsilon))=\frac{\pi}{2}-\arctan \left( \frac{x_1}{x_2+2\pi\epsilon} \right).
\end{equation*}
\end{notation}

\begin{lemma}
\label{LEM_energy_near_vortices_gamma2}
Let~$\delta = (\delta_1,\delta_2) \in \R^2$. For any~$r \in (0,1)$, set~$I_r=(-r,r)$ and~$B_r^+=B_r \cap \R_+^2$. Then the function
\begin{equation}
\label{DEF_function_gamma2}
r \in (0,1) \mapsto
\liminf\limits_{\epsilon \rightarrow 0}
\left( \inf\limits_{\psi=\phid_\epsilon^\ast \text{ on } \partial B_r^+ \setminus I_r} F_{\epsilon}^\delta(\psi;B_r^+)-\pi \log \frac{r}{\epsilon} \right)
\end{equation}
has a limit as~$r \rightarrow 0$ given by~$\pi\log\frac{e}{4\pi}$.
\end{lemma}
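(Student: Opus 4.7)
The plan is to reduce to the known $\delta=0$ case of Ignat-Kurzke~\cite{IK21} (the statement cited in equation~(85) appearing in the proof of Theorem~\ref{THM_4.2.2_IK21+DMI}) by decomposing
\[ F_\epsilon^\delta(\psi;B_r^+) = F_\epsilon^0(\psi;B_r^+) + I_\epsilon^\delta(\psi), \qquad I_\epsilon^\delta(\psi) := -2\int_{B_r^+}\delta\cdot\nabla\psi\,\d x, \]
and showing that the DMI correction $I_\epsilon^\delta$ contributes only $O(r)$, which vanishes in the final limit $r\to 0$. The $\delta=0$ counterpart is already established in \cite{IK21}: the same liminf-then-$r\to0$ limit equals $\gamma_0=\pi\log\tfrac{e}{4\pi}$. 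The key technical step is an integration by parts, which turns $I_\epsilon^\delta$ into boundary integrals on $\partial B_r^+$ that can be controlled either by the Dirichlet data (on the semicircle) or by the penalty term (on $I_r$).

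For the upper bound, I would use the admissible test function $\psi=\phid_\epsilon^\ast$. By \cite[Eq.~(85)]{IK21} one has $F_\epsilon^0(\phid_\epsilon^\ast;B_r^+) - \pi\log(r/\epsilon) = \gamma_0 + o_\epsilon(1) + o_r(1)$, while Green's formula yields $I_\epsilon^\delta(\phid_\epsilon^\ast) = -2\int_{\partial B_r^+}\phid_\epsilon^\ast\,(\delta\cdot\nu)\,\d\mathcal{H}^1$. Since $0\leqslant\phid_\epsilon^\ast\leqslant\pi$ and $\mathcal{H}^1(\partial B_r^+)=O(r)$, this correction is $O(r)$; a direct computation (using that $\int_{-r}^r\phid_\epsilon^\ast(x_1,0)\,\d x_1=\pi r$ by oddness of the arctan) shows the bound to be sharp. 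This gives $\inf F_\epsilon^\delta - \pi\log(r/\epsilon) \leqslant \gamma_0 + O(r) + o_\epsilon(1)$.

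For the lower bound, take a minimising sequence $(\psi_\epsilon)$. Green's formula gives
\[ I_\epsilon^\delta(\psi_\epsilon) = -2\int_{\partial B_r^+\setminus I_r}\phid_\epsilon^\ast\,(\delta\cdot\nu)\,\d\mathcal{H}^1 + 2\delta_2\int_{-r}^r\psi_\epsilon(x_1,0)\,\d x_1, \]
whose first piece is $O(r)$ uniformly in $\psi_\epsilon$. For the second, the penalty gives $\int_{I_r}\sin^2\psi_\epsilon\leqslant 2\pi\epsilon F_\epsilon^\delta(\psi_\epsilon)\lesssim\epsilon\log(r/\epsilon)$, so using $\sin^2 x \geqslant \tfrac{4}{\pi^2}\mathrm{dist}(x,\pi\Z)^2$ one writes $\psi_\epsilon(x_1,0)=\pi k_\epsilon(x_1)+\rho_\epsilon(x_1)$ with $k_\epsilon\colon I_r\to\Z$ and $\|\rho_\epsilon\|_{L^1(I_r)}=o_\epsilon(1)$. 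The upper bound already shows $F_\epsilon^\delta(\psi_\epsilon)\leqslant\pi\log(r/\epsilon)+C$, and this energy budget bounds the jump set of $k_\epsilon$: each jump costs at least $\pi\log(1/\epsilon)$ of Dirichlet energy, and the winding-$\pi$ Dirichlet data on the semicircle forces exactly one such transition near the origin, yielding $\pi\int_{I_r}k_\epsilon\,\d x_1=O(r)$. Combining gives $I_\epsilon^\delta(\psi_\epsilon)=O(r)+o_\epsilon(1)$, so applying the $\delta=0$ liminf bound to $F_\epsilon^0(\psi_\epsilon;B_r^+)$ produces $\liminf_{\epsilon\to 0}(F_\epsilon^\delta(\psi_\epsilon)-\pi\log(r/\epsilon))\geqslant\gamma_0 - Cr$, and sending $r\to 0$ concludes. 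The main obstacle is justifying the energy-budget argument rigorously: one must show that each transition of $k_\epsilon$ really costs $\pi\log(1/\epsilon)$ localised in $B_r^+$, an analogue of the compactness~(i) in Theorem~\ref{THM_1.2_IK21+DMI} in the half-disk geometry, which is handled by a standard ball-construction / degree argument as in \cite{IK21} but requires care since the topology on $I_r$ is detected only through the penalty rather than through a Jacobian.
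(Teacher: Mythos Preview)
Your upper bound is essentially the paper's argument: test with $\phid_\epsilon^\ast$, integrate by parts, and use $|\phid_\epsilon^\ast|\leqslant\pi$ together with $|\partial B_r^+|=O(r)$.

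Your lower bound, however, takes a different route from the paper and carries a genuine gap. You aim to control $\int_{I_r}\psi_\epsilon(x_1,0)\,\d x_1$ by writing $\psi_\epsilon(\cdot,0)=\pi k_\epsilon+\rho_\epsilon$ with $k_\epsilon$ integer-valued and then arguing that the energy budget $\pi\log(r/\epsilon)+C$ allows at most one jump of $k_\epsilon$. But the penalty bound only gives $\|\rho_\epsilon\|_{L^2(I_r)}\to 0$; it does not make $k_\epsilon$ piecewise constant, nor does it exclude $k_\epsilon$ taking large values on small sets. The heuristic ``each jump costs $\pi\log(1/\epsilon)$'' would need a localised lower bound of boundary-vortex type in the half-disk, and even then two jumps at mutual distance $d$ cost roughly $2\pi\log(d/\epsilon)$, which fits the budget once $d\lesssim\sqrt{r\epsilon}$; you would then still have to bound the value of $k_\epsilon$ between such nearby jumps. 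As you yourself note, this requires a ball-construction/compactness argument that is not supplied.

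The paper avoids all of this with a truncation trick. Given any competitor $\psi$ with $\psi=\phid_\epsilon^\ast$ on $\partial B_r^+\setminus I_r$, it defines
\[
\widetilde{\psi}(x_1,x_2)=\delta_2 x_2 + \big[(\psi(x_1,x_2)-\delta_2 x_2)\big]_{-N\pi}^{N\pi},
\]
where $[\cdot]_{-N\pi}^{N\pi}$ denotes truncation and $N$ is chosen so that $\phid_\epsilon^\ast-\delta_2 x_2\in[-N\pi,N\pi]$ on the semicircle (so $\widetilde{\psi}$ keeps the boundary condition). The point of shifting by $\delta_2 x_2$ before truncating is that the interior integrand can be written as
\[
|\partial_1\psi|^2-2\delta_1\partial_1\psi+|\partial_2\psi-\delta_2|^2-\delta_2^2,
\]
and truncating $\psi-\delta_2 x_2$ does not increase $|\partial_1\psi|$, leaves $\int_{B_r^+}\partial_1\psi$ unchanged (it is a semicircle integral of $\phid_\epsilon^\ast$), and does not increase $|\partial_2\psi-\delta_2|$. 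On $I_r$ the truncation replaces $\sin^2\psi$ by $0$ wherever it acts. Hence $F_\epsilon^\delta(\widetilde{\psi};B_r^+)\leqslant F_\epsilon^\delta(\psi;B_r^+)$. Now $\widetilde{\psi}(\cdot,0)\in[-N\pi,N\pi]$ by construction, so Green's formula gives the $I_r$ boundary term bounded by $C r$ with $C$ depending only on $\delta$ and $N$, uniformly in the competitor. One then compares $F_\epsilon^0(\widetilde{\psi};B_r^+)$ to $F_\epsilon^0(\phid_\epsilon^\ast;B_r^+)$ via the minimality of $\phid_\epsilon^\ast$ for $F_\epsilon^0$ (Cabr\'e--Sol\`a-Morales), and concludes with the $\delta=0$ limit from \cite{IK21}. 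This eliminates the need for any jump-counting or compactness argument.
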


\begin{proof}
In the case~$\delta=0$, Cabr\'e and Sola-Moral\`es~\cite[Lemma~3.1]{CSM05} showed that, for $r \in (0,1)$,
\begin{equation}
\label{CSM05_lemma3.1}
\inf\limits_{\psi=\phid_\epsilon^\ast \text{ on } \partial B_r^+ \setminus I_r} F_{\epsilon}^0(\psi;B_r^+)
= F_\epsilon^0(\phid_\epsilon^\ast;B_r^+),
\end{equation}
i.e.,~$\phid_\epsilon^\ast$ is a minimiser of~\eqref{CSM05_lemma3.1}. 
Furthermore, Ignat and Kurzke~\cite[Lemma~4.14]{IK21} proved that
\begin{equation}
\label{IK21_lemma4.14}
\lim\limits_{r \rightarrow 0}
\liminf\limits_{\epsilon \rightarrow 0}
\left( F_{\epsilon}^0(\phid_\epsilon^\ast;B_r^+)-\pi \log \frac{r}{\epsilon} \right)
= \pi\log \frac{e}{4\pi}.
\end{equation}
We will use these statements to prove that~$\pi\log\frac{e}{4\pi}$ is also the limit as~$r \rightarrow 0$ of the function in~\eqref{DEF_function_gamma2}, for all~$\delta \in \R^2$. We define
\begin{equation}
\gamma_2
:=\limsup\limits_{r \rightarrow 0}
\liminf\limits_{\epsilon \rightarrow 0}
\left( \inf\limits_{\psi=\phid_\epsilon^\ast \text{ on } \partial B_r^+ \setminus I_r} F_{\epsilon}^\delta(\psi;B_r^+)-\pi \log \frac{r}{\epsilon} \right).
\end{equation}

\textit{Step 1 :} We first prove that~$\gamma_2 \leqslant \pi\log\frac{e}{4\pi}$.
\vspace{0.1cm}

\noindent
To do so, we observe that~$\inf \lb F_\epsilon^\delta(\psi;B_r^+) \colon \psi=\phid_\epsilon^\ast \text{ on } \dr B_r^+ \setminus I_r \rb \leqslant F_\epsilon^\delta(\phid_\epsilon^\ast;B_r^+)$, hence
\begin{equation}
\label{eqn_lemmegamma2_1}
\gamma_2 \leqslant \limsup\limits_{r \rightarrow 0} \liminf\limits_{\epsilon \rightarrow 0} \left( F_\epsilon^\delta(\phid_\epsilon^\ast;B_r^+) - \pi \log\frac{r}{\epsilon} \right).
\end{equation}
Let $r \in (0,1)$. By Green's formula, we have
\begin{align*}
F_{\epsilon}^\delta(\phid_\epsilon^\ast;B_r^+)
& = F_\epsilon^0(\phid_\epsilon^\ast;B_r^+) - 2 \int_{B_r^+} \delta \cdot \nabla \phid_\epsilon^\ast \ \d x
= F_\epsilon^0(\phid_\epsilon^\ast;B_r^+) - 2 \int_{\dr B_r^+} (\delta \cdot \nu) \phid_\epsilon^\ast \ \d \mathcal{H}^1,
\end{align*}
where $\nu=(\nu_1,\nu_2)$ is the outer unit normal vector on $\partial B_r^+$. Moreover,
\begin{equation*}
\left\vert \int_{\dr B_r^+} (\delta \cdot \nu) \phid_\epsilon^\ast \ \d \mathcal{H}^1 \right\vert
\leqslant \lv \delta \rv \lV \phid_\epsilon^\ast \rV_{L^\infty} \lv \dr B_r^+ \rv
\leqslant Cr
\end{equation*}
for some~$C>0$ independent of~$\epsilon$ and~$r$, since~$\lv \phid_\epsilon^\ast \rv \leqslant \pi$ and~$\left\vert \partial B_r^+ \right\vert = (\pi+2)r$. We deduce that
\begin{equation*}
F_{\epsilon}^\delta(\phid_\epsilon^\ast;B_r^+)-\pi \log \frac{r}{\epsilon}
= F_{\epsilon}^0(\phid_\epsilon^\ast;B_r^+)-\pi \log \frac{r}{\epsilon}+O(r).
\end{equation*}
Using~\eqref{IK21_lemma4.14} and~\eqref{eqn_lemmegamma2_1}, the conclusion follows.
\vspace{.1cm}

\textit{Step 2 :} We prove that~$\widetilde{\gamma}_2
:=\liminf\limits_{r \rightarrow 0}
\liminf\limits_{\epsilon \rightarrow 0}
\left( \inf\limits_{\psi=\phid_\epsilon^\ast \text{ on } \partial B_r^+ \setminus I_r} F_{\epsilon}^\delta(\psi;B_r^+)-\pi \log \frac{r}{\epsilon} \right)
\geqslant \pi\log\frac{e}{4\pi}$.
\vspace{0.1cm}

\noindent
Let~$r \in (0,1)$ and~$\psi \colon B_r^+ \rightarrow \R$ such that~$\psi=\phid_\epsilon^\ast$ on~$\dr B_r^+ \setminus I_r$. As~$\phid_\epsilon^\ast \in [0,\pi]$ in~$\R_+^2$, then
\begin{equation*}
-\lv \delta_2 \rv \leqslant \phid_\epsilon^\ast(x_1,x_2)-\delta_2x_2 \leqslant \pi + \lv \delta_2 \rv
\ \ \text{ on } \dr B_r^+ \setminus I_r.
\end{equation*}
Obviously, there exists an integer~$N \geqslant 1$ (independent of~$\epsilon$ and~$\psi$) such that
\begin{equation*}
-N\pi \leqslant \phid_\epsilon^\ast(x_1,x_2)-\delta_2x_2 \leqslant N\pi
\ \ \text{ on } \dr B_r^+ \setminus I_r, \text{ for every } r \in (0,1).
\end{equation*}
We will replace~$\psi$ by a new function~$\widetilde{\psi} \colon B_r^+ \rightarrow \R$ that is appropriately bounded and with less energy: more precisely, we define for~$x=(x_1,x_2) \in B_r^+$:
\begin{equation}
\label{psitildezeropi}
\widetilde{\psi}(x_1,x_2) = \delta_2x_2 +
\lb \begin{array}{ll}
\psi(x_1,x_2)-\delta_2x_2 & \text{ if } -N\pi \leqslant \psi(x_1,x_2)-\delta_2x_2 \leqslant N\pi,
\\
-N\pi & \text{ if } \psi(x_1,x_2)-\delta_2x_2 \leqslant -N\pi,
\\
N\pi & \text{ if } \psi(x_1,x_2)-\delta_2x_2 \geqslant N\pi.
\end{array}
\right.
\end{equation}
Note that~$\widetilde{\psi}(\cdot,0) \in [-N\pi,N\pi]$ on~$I_r$, and~$\widetilde{\psi}=\psi=\phid_\epsilon^\ast$ on~$\dr B_r^+ \setminus I_r$.

Let us show that~$F_\epsilon^\delta(\widetilde{\psi};B_r^+) \leqslant F_\epsilon^\delta(\psi;B_r^+)$. First, we note that on~$I_r$, $\widetilde{\psi}(\cdot,0)$ is equal either to~$\psi(\cdot,0)$, or to~$-N\pi$, or to~$N\pi$. Thus~$\sin^2 \widetilde{\psi}(\cdot,0) \leqslant \sin^2 \psi(\cdot,0)$, yielding
\begin{equation*}
\int_{I_r} \sin^2 \widetilde{\psi}(x_1,0) \ \d x_1 \leqslant \int_{I_r} \sin^2 \psi(x_1,0) \ \d x_1.
\end{equation*}
Inside~$B_r^+$, we note that 
\begin{align*}
\int_{B_r^+} \left( \vert \nabla \widetilde{\psi} \vert^2 - 2\delta \cdot \nabla \widetilde{\psi} \right) \d x
& = 
\int_{B_r^+} \left( \vert \dr_1\widetilde{\psi} \vert^2 - 2\delta_1 \dr_1\widetilde{\psi} \right) \d x
+ \int_{B_r^+} \vert \dr_2\widetilde{\psi}-\delta_2 \vert^2 \d x
- \delta_2^2 \lv B_r^+ \rv. 
\end{align*}
On the one hand, using~\eqref{psitildezeropi}, we have~$\vert \dr_1 \widetilde{\psi} \vert \leqslant \vert \dr_1 \psi \vert$ in~$B_r^+$, yielding~$\int_{B_r^+} \vert \dr_1\widetilde{\psi} \vert^2 \d x \leqslant \int_{B_r^+} \vert \dr_1\psi \vert^2 \d x$. Moreover, 
\begin{equation*}
\int_{B_r^+} \dr_1 \widetilde{\psi} \ \d x
= \int_{\dr B_r^+ \setminus I_r} \widetilde{\psi} \nu_1 \ \d \mathcal{H}^1
= \int_{\dr B_r^+ \setminus I_r} \phid_\epsilon^\ast \nu_1 \ \d \mathcal{H}^1
= \int_{B_r^+} \dr_1 \psi \ \d x.
\end{equation*}
On the other hand, using~\eqref{psitildezeropi},~$\vert \dr_2 \widetilde{\psi}-\delta_2 \vert \leqslant \vert \dr_2 \psi-\delta_2 \vert$ in~$B_r^+$, yielding
\begin{align*}
\int_{B_r^+} \vert \dr_2 \widetilde{\psi}-\delta_2 \vert^2 \d x \leqslant \int_{B_r^+} \vert \dr_2 \psi-\delta_2 \vert^2 \d x.
\end{align*}
Combining the above inequalities, we deduce that~$F_\epsilon^\delta(\widetilde{\psi};B_r^+) \leqslant F_\epsilon^\delta(\psi;B_r^+)$.
By definition of~$F_\epsilon^\delta(\widetilde{\psi};B_r^+)$ and using Green's formula,
\begin{align*}
F_\epsilon^\delta(\widetilde{\psi};B_r^+)
& = F_\epsilon^0(\widetilde{\psi};B_r^+)
-\int_{B_r^+} 2\delta \cdot \nabla \widetilde{\psi} \ \d x
\\
& = F_\epsilon^0(\widetilde{\psi};B_r^+)
- \int_{\dr B_r^+ \setminus I_r} 2(\delta \cdot \nu) \widetilde{\psi} \ \d \mathcal{H}^1
+ \int_{I_r} 2\delta_2 \widetilde{\psi} (x_1,0) \ \d x_1
\\
& \stackrel{\eqref{CSM05_lemma3.1}}{\geqslant} F_\epsilon^0(\phid_\epsilon^\ast;B_r^+)
- \int_{\dr B_r^+ \setminus I_r} 2(\delta \cdot \nu) \phid_\epsilon^\ast \ \d \mathcal{H}^1
+ \int_{I_r} 2\delta_2 \widetilde{\psi} (x_1,0) \ \d x_1.
\end{align*}
Using that~$\phid_\epsilon^\ast \in [0,\pi]$ and~$\widetilde{\psi}(\cdot,0) \in [-N\pi,N\pi]$ on~$I_r$, we get
\begin{align*}
F_\epsilon^\delta(\widetilde{\psi};B_r^+)
& \geqslant F_\epsilon^0(\phid_\epsilon^\ast;B_r^+)-Cr
\end{align*}
for some~$C>0$ independent of~$\epsilon$ and~$r$. Hence,
\begin{align*}
\widetilde{\gamma}_2
& \geqslant \liminf\limits_{r \rightarrow 0} \left( 
\liminf\limits_{\epsilon \rightarrow 0} \left( F_\epsilon^0(\phid_\epsilon^\ast;B_r^+)-\pi\log\frac{r}{\epsilon} \right)-Cr \right)
\stackrel{\eqref{IK21_lemma4.14}}{=} \pi\log\frac{e}{4\pi}.
\end{align*}
Combining Step~1 and Step~2, we deduce that the function in~\eqref{DEF_function_gamma2} has a limit at~$r \rightarrow 0$ given by~$\pi\log\frac{e}{4\pi}$.
\end{proof}

\begin{lemma}
\label{LEM_4.14_IK21+DMI}
Let~$\delta = (\delta_1,\delta_2) \in \R^2$. For any~$r \in (0,1)$, set~$I_r=(-r,r)$ and~$B_r^+=B_r \cap \R_+^2$. Then the function
\begin{equation}
\label{DEF_function_gamma1}
r \in (0,1) \mapsto
\liminf\limits_{\epsilon \rightarrow 0}
\left( \inf\limits_{\psi=\phid^\ast \text{ on } \partial B_r^+ \setminus I_r} F_{\epsilon}^\delta(\psi;B_r^+)-\pi \log \frac{r}{\epsilon} \right),
\end{equation}
has a limit as~$r \rightarrow 0$ given by~$\pi\log\frac{e}{4\pi}$.
\end{lemma}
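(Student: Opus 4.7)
The strategy is to compare the infimum under the boundary condition $\psi=\phid^\ast$ with the infimum under $\psi=\phid_\epsilon^\ast$, for which the limit~$\pi\log\frac{e}{4\pi}$ is already given by Lemma~\ref{LEM_energy_near_vortices_gamma2}. The key pointwise estimates are that, in the annulus $A_r := \{x\in B_r^+ : |x|\geqslant r/2\}$ and for $\epsilon\ll r$, one has
\begin{equation*}
\vert \phid_\epsilon^\ast(x)-\phid^\ast(x)\vert = O\!\left(\tfrac{\epsilon}{r}\right), \qquad
\vert \nabla(\phid_\epsilon^\ast-\phid^\ast)(x)\vert = O\!\left(\tfrac{\epsilon}{r^2}\right),
\end{equation*}
by direct differentiation of the formulas for $\phid^\ast$ and $\phid_\epsilon^\ast$ in Notation~\ref{NOT_phistar_phiepsstar}.

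\textbf{Upper bound.} Set $\eta=\sqrt{\epsilon}$, pick a radial cutoff $\chi(|x|)$ with $\chi\equiv 1$ on $\{|x|\geqslant r-\eta\}$, $\chi\equiv 0$ on $\{|x|\leqslant r-2\eta\}$ and $|\chi'|\lesssim 1/\eta$, and define the competitor
\begin{equation*}
\psi_\epsilon := \phid_\epsilon^\ast + \chi(|x|)\bigl(\phid^\ast-\phid_\epsilon^\ast\bigr) \ \ \text{ in } B_r^+,
\end{equation*}
so that $\psi_\epsilon=\phid^\ast$ on $\partial B_r^+ \setminus I_r$. Expanding $|\nabla \psi_\epsilon|^2$, $\delta\cdot\nabla\psi_\epsilon$ and $\sin^2\psi_\epsilon$ around $\phid_\epsilon^\ast$, and using the pointwise estimates together with the fact that the perturbation is supported in a strip of area $O(r\eta)$ and the boundary of $I_r$ affected has length $O(\eta)$, I expect each correction to the three pieces of $F_\epsilon^\delta(\phid_\epsilon^\ast;B_r^+)$ to vanish as $\epsilon\to0$ (the boundary penalty yields $\tfrac{1}{2\pi\epsilon}\cdot O(\eta\cdot\epsilon/r)=O(\eta/r)\to 0$, and the dominant volume contribution $\int |u\nabla\chi|^2$ is $O(\epsilon^2/(r\eta))=O(\epsilon^{3/2}/r)\to0$). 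Combined with the identity $F_\epsilon^\delta(\phid_\epsilon^\ast;B_r^+)=F_\epsilon^0(\phid_\epsilon^\ast;B_r^+)+O(r)$ (as in Step~1 of Lemma~\ref{LEM_energy_near_vortices_gamma2}) and~\eqref{IK21_lemma4.14}, this yields
\begin{equation*}
\limsup_{r\to0}\liminf_{\epsilon\to0}\Bigl(\inf_{\psi=\phid^\ast \text{ on } \partial B_r^+ \setminus I_r} F_\epsilon^\delta(\psi;B_r^+) - \pi\log\tfrac{r}{\epsilon}\Bigr) \leqslant \pi\log\tfrac{e}{4\pi}.
\end{equation*}

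\textbf{Lower bound.} Given any $\psi$ with $\psi=\phid^\ast$ on $\partial B_r^+ \setminus I_r$, I may assume $F_\epsilon^\delta(\psi;B_r^+)\leqslant \pi\log\tfrac{r}{\epsilon}+C$, which via Young's inequality (as in the proof of Theorem~\ref{THM_4.2.1_IK21+DMI}) forces $\|\nabla\psi\|_{L^2(B_r^+)}^2 = O(\log\tfrac{r}{\epsilon})$. Setting $\tilde\psi:=\psi+\chi(|x|)(\phid_\epsilon^\ast-\phid^\ast)$ with the same $\chi$ as above, we get $\tilde\psi=\phid_\epsilon^\ast$ on the arc. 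The new cross term $\int \nabla\psi\cdot\nabla v$ (with $v=\chi u$, $u=\phid_\epsilon^\ast-\phid^\ast$) is controlled by Cauchy--Schwarz by $\|\nabla\psi\|_{L^2}\cdot\|\nabla v\|_{L^2} = O\bigl(\sqrt{\log(r/\epsilon)}\cdot \epsilon/\sqrt{r\eta}\bigr)=o_\epsilon(1)$, all other volume and boundary contributions are controlled exactly as in the upper bound step, and therefore
\begin{equation*}
F_\epsilon^\delta(\tilde\psi;B_r^+) \leqslant F_\epsilon^\delta(\psi;B_r^+) + o_\epsilon(1).
\end{equation*}
Taking infima and applying Lemma~\ref{LEM_energy_near_vortices_gamma2} gives the matching lower bound $\liminf_{r\to0}\liminf_{\epsilon\to0}(\cdots)\geqslant \pi\log\tfrac{e}{4\pi}$.

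\textbf{Main obstacle.} The delicate point is the boundary-penalty term, whose prefactor $1/(2\pi\epsilon)$ threatens to amplify the $O(\epsilon/r)$ perturbation of $\psi$ on $I_r$ into an $O(1/r)$ error. The key is that $v=\chi u$ is \emph{supported} in $|x_1|\in[r-2\eta,r]$, an interval of length $O(\eta)$, so that $\frac{1}{2\pi\epsilon}\int_{I_r}|\sin^2\tilde\psi-\sin^2\psi|\,dx_1 \lesssim \frac{1}{\epsilon}\cdot \eta \cdot \tfrac{\epsilon}{r}=\tfrac{\eta}{r}$, and the choice $\eta=\sqrt\epsilon$ makes this $o_\epsilon(1)$. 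The analogous condition $\eta^2\gg \epsilon^2/r$ also ensures the volume term $\int|u\nabla\chi|^2$ stays $o_\epsilon(1)$, so both constraints are met simultaneously by $\eta=\sqrt\epsilon$.
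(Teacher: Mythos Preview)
Your approach is correct and constitutes a genuinely different route from the paper's.

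The paper does not modify the competitor inside $B_r^+$. Instead, it takes a competitor $\psi$ with $\psi=\phid^\ast$ on $\partial B_r^+\setminus I_r$ and \emph{extends} it to the larger half-disk $B_{r(1+r)}^+$ by the explicit interpolation
\[
\phid_\epsilon(x)=\arg\Bigl(x_1+i\bigl(x_2+2\pi\epsilon\tfrac{|x|-r}{r^2}\bigr)\Bigr)
\]
in the annulus $B_{r(1+r)}^+\setminus B_r^+$, which matches $\phid^\ast$ on the inner arc and $\phid_\epsilon^\ast$ on the outer arc. The energy of this extension in the annulus is independent of $\psi$ and vanishes in the double limit (by \cite[Lemma~4.14]{IK21}); this gives $\widetilde\gamma_2\leqslant\widetilde\gamma_1$. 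The reverse inequality $\gamma_1\leqslant\gamma_2$ comes from the same construction with $r(1-r)$ in place of $r(1+r)$.

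The main practical difference is that the paper's annulus extension decouples the modification from the competitor $\psi$: the correction terms are computed once and for all for the explicit function $\phid_\epsilon$, so no a priori bound on $\|\nabla\psi\|_{L^2}$ is needed. Your cutoff argument, by contrast, produces a cross term $\int\nabla\psi\cdot\nabla v$ in the lower-bound step, which forces you to first establish $\|\nabla\psi\|_{L^2(B_r^+)}^2=O(\log\tfrac{r}{\epsilon})$ for near-minimizers via Young's inequality. This is a minor extra step, and you handle it correctly. On the other hand, your approach is more self-contained in that the pointwise estimates $|\phid_\epsilon^\ast-\phid^\ast|=O(\epsilon/r)$ and $|\nabla(\phid_\epsilon^\ast-\phid^\ast)|=O(\epsilon/r^2)$ on $A_r$ follow by one line of calculus, whereas the paper relies on the annulus energy computation already carried out in~\cite{IK21}. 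Your identification of the boundary-penalty term as the delicate point, and the choice $\eta=\sqrt\epsilon$ to balance the constraints, are both on target.
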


\begin{proof}
We define
\begin{equation*}
\gamma_1
:=\limsup\limits_{r \rightarrow 0}
\liminf\limits_{\epsilon \rightarrow 0}
\left( \inf\limits_{\psi=\phid^\ast \text{ on } \partial B_r^+ \setminus I_r} F_{\epsilon}^\delta(\psi;B_r^+)-\pi \log \frac{r}{\epsilon} \right),
\end{equation*}
\begin{equation*}
\widetilde{\gamma}_1
:=\liminf\limits_{r \rightarrow 0}
\liminf\limits_{\epsilon \rightarrow 0}
\left( \inf\limits_{\psi=\phid^\ast \text{ on } \partial B_r^+ \setminus I_r} F_{\epsilon}^\delta(\psi;B_r^+)-\pi \log \frac{r}{\epsilon} \right),
\end{equation*}
and recall the quantities~$\gamma_2$ and~$\widetilde{\gamma}_2$ introduced in Lemma~\ref{LEM_energy_near_vortices_gamma2}.
\vspace{.1cm}

\textit{Step 1.} We show that~$\widetilde{\gamma}_2 \leqslant \widetilde{\gamma}_1$.
Let $r \in (0,1)$ and~$\psi \colon B_r^+ \rightarrow \R$ such that~$\psi=\phid^\ast$ on~$\partial B_r \setminus I_r$. Consider the family of functions~$(\phid_\epsilon)_{\epsilon>0}$ defined in~$B^+_{r(1+r)} \setminus B_r$ as
\begin{equation*}
\phid_\epsilon(x_1,x_2)
= \arg \left( x_1+i \left( x_2+2\pi\epsilon \frac{\sqrt{x_1^2+x_2^2}-r}{r^2} \right)\right) \in [0,\pi]
\end{equation*}
for every~$(x_1,x_2) \in B^+_{r(1+r)} \setminus B_r$, and that satisfies, for every~$\epsilon>0$,~$\phid_\epsilon=\phid_\epsilon^\ast$ on the half-circle~$\partial B^+_{r(1+r)} \setminus I_{r(1+r)}$ and~$\phid_\epsilon=\phid^\ast$ on the half-circle~$\partial B_r^+ \setminus I_r$.
We extend~$\psi=\phid_\epsilon$ on~$B_{r(1+r)}^+ \setminus B_r$ and compute
\begin{align}
\label{eq_lem_4.14_IK21+DMI_1}
F_{\epsilon}^\delta(\psi;B^+_{r(1+r)})-\pi\log \frac{r(1+r)}{\epsilon}
& = F_{\epsilon}^\delta(\psi;B^+_r)-\pi \log \frac{r}{\epsilon}-\pi\log(1+r)
\nonumber
\\
& \quad + \int_{B^+_{r(1+r)} \setminus B_r} \left\vert \nabla \phid_\epsilon \right\vert^2 \d x
+ \frac{1}{2\pi\epsilon} \int_{I_{r(1+r)} \setminus I_r} \sin^2\phid_\epsilon(x_1,0) \ \d x_1
\nonumber
\\
& \quad -2 \int_{B^+_{r(1+r)} \setminus B_r} \delta \cdot \nabla \phid_\epsilon \ \d x.
\end{align}
By~\cite[Lemma~4.14]{IK21} (see Step~1 in the proof of Lemma~4.14):
\begin{equation*}
\lim\limits_{r \rightarrow 0}
\lim\limits_{\epsilon \rightarrow 0}
\left(
\int_{B^+_{r(1+r)} \setminus B_r} \left\vert \nabla \phid_\epsilon \right\vert^2 \d x
+ \frac{1}{2\pi\epsilon} \int_{I_{r(1+r)} \setminus I_r} \sin^2\phid_\epsilon(x_1,0) \ \d x_1
\right)=0.
\end{equation*}
Moreover, by integration by parts,
\begin{equation*}
\lv \int_{B^+_{r(1+r)} \setminus B_r} \delta \cdot \nabla \phid_\epsilon \ \d x \rv
\leqslant \int_{\dr (B_{r(1+r)}^+ \setminus B_r)} \lv \delta \rv \lv \phid_\epsilon \rv \ \d \mathcal{H}^1 \leqslant C_\delta r
\end{equation*}
where~$C_\delta$ is independent of~$r$ and~$\epsilon$, so that~$\lim\limits_{r \rightarrow 0} \liminf\limits_{\epsilon \rightarrow 0} \int_{B^+_{r(1+r)} \setminus B_r} \delta \cdot \nabla \phid_\epsilon \ \d x =0$. Clearly, the left-hand side in~\eqref{eq_lem_4.14_IK21+DMI_1} is greater than
\begin{equation*}
\inf\limits_{\widetilde{\psi}=\phid_\epsilon^\ast \text{ on } \partial B_{r(1+r)}^+ \setminus I_{r(1+r)} }
F_{\epsilon}^\delta(\widetilde{\psi};B_{r(1+r)}^+)-\pi \log \frac{r(1+r)}{\epsilon},
\end{equation*}
so that, taking the infimum of~$F_{\epsilon}^\delta(\psi;B_r^+)$ over functions~$\psi$ such that~$\psi=\phid^\ast$ on~$\partial B_{r}^+ \setminus I_{r}$ in~\eqref{eq_lem_4.14_IK21+DMI_1}, and then taking~$\liminf$ as~$\epsilon \rightarrow 0$ and then~$\liminf$ as~$r \rightarrow 0$, we get~$\widetilde{\gamma}_2 \leqslant \widetilde{\gamma}_1$.
\vspace{.1cm}

\textit{Step 2.} Considering~$r(1-r)$ instead of~$r(1+r)$ and proceeding as in Step~1, we show similarly that~$\gamma_1 \leqslant \gamma_2$.
\vspace{.1cm}

\textit{Step 3.} Combining Steps~1 and~2 with Lemma~\ref{LEM_energy_near_vortices_gamma2}, the conclusion follows.
\end{proof}

\begin{lemma}
\label{LEM_4.15_IK21+DMI}
Let~$\delta = (\delta_1,\delta_2) \in \R^2$ and~$d \in \mathbb{N}^\ast$. For every~$(x_1,x_2) \in \mathbb{R}_+^2=\mathbb{R}\times(0,+\infty)$, set
\begin{equation*}
\phid_d^\ast(x_1,x_2)
= d\arg(x_1+ix_2).
\end{equation*}
Then for every~$r \in (0,1)$ and~$\epsilon \in (0,e^{-1/r^2})$, there exists a function~$\phid_{d,\epsilon} \colon B_r^+ \rightarrow \mathbb{R}$ such that~$\phid_{d,\epsilon}=\phid_d^\ast$ on~$\partial B_r \cap \mathbb{R}_+^2$ and
\begin{equation*}
F_{\epsilon}^\delta(\phid_{d,\epsilon};B_r^+)
\leqslant \pi d \log \frac{r}{\epsilon}+Cd^2 (1+\left\vert \log r \right\vert + \log \left\vert \log \epsilon \right\vert ),
\end{equation*}
where~$C>0$ is independent of~$r$ and~$\epsilon$.
\end{lemma}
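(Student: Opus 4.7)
The plan is to reduce the construction to the $\delta=0$ case already treated by Ignat-Kurzke~\cite[Lemma~4.15]{IK21} and to absorb the DMI contribution into the error term. Concretely, I would take as competitor the very function $\phid_{d,\epsilon} \colon B_r^+ \to \R$ constructed in~\cite[Lemma~4.15]{IK21}. By construction it satisfies the prescribed boundary condition $\phid_{d,\epsilon}=\phid_d^\ast$ on $\partial B_r \cap \R_+^2$ and the estimate
$$F_{\epsilon}^0(\phid_{d,\epsilon}; B_r^+) \leq \pi d \log \frac{r}{\epsilon}+Cd^2\bigl(1+|\log r|+\log|\log \epsilon|\bigr).$$
The construction in~\cite{IK21} is obtained by gluing $d$ elementary profiles of type $\phid_\epsilon^\ast$ (each valued in $[0,\pi]$) centred at $d$ appropriately spaced points of $I_r$ and shifted by multiples of $\pi$, so that the resulting competitor takes values in $[0,d\pi]$ on all of $\overline{B_r^+}$.

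Next I would write
$$F_{\epsilon}^\delta(\phid_{d,\epsilon};B_r^+) = F_{\epsilon}^0(\phid_{d,\epsilon};B_r^+) - 2\int_{B_r^+} \delta\cdot\nabla \phid_{d,\epsilon}\,\d x,$$
and handle the DMI term purely through its boundary contribution via Green's formula:
$$\int_{B_r^+}\delta\cdot\nabla \phid_{d,\epsilon}\,\d x = \int_{\partial B_r^+}(\delta\cdot\nu)\,\phid_{d,\epsilon}\,\d\mathcal{H}^1,$$
where $\nu$ is the outer unit normal on $\partial B_r^+$. Using the pointwise bound $0\leq \phid_{d,\epsilon}\leq d\pi$ on $\partial B_r^+$ (which is immediate on the half-circle since $\phid_{d,\epsilon}=\phid_d^\ast=d\arg \in [0,d\pi]$, and which on $I_r$ follows from the structure of the construction) together with $|\partial B_r^+|=(\pi+2)r \leq \pi+2$, the boundary integral is bounded by $C|\delta|\,d$. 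Since $\delta$ is fixed and $d\geq 1$, this is absorbed into $Cd^2$ at the cost of enlarging the constant $C$, and the claimed estimate follows.

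The only technical point that needs attention is the uniform pointwise bound $\|\phid_{d,\epsilon}\|_{L^\infty(\overline{B_r^+})} \leq d\pi$: without it the boundary term above cannot be controlled. This bound is not stated explicitly in~\cite[Lemma~4.15]{IK21}, but it is a direct consequence of the explicit gluing construction used there, since each of the $d$ elementary angular profiles lies in $[0,\pi]$ and the translates by multiples of $\pi$ are chosen so that the full competitor sits in $[0,d\pi]$. Once this observation is recorded, the rest of the proof is routine.
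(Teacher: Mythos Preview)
Your proposal is correct and follows essentially the same route as the paper: take the competitor $\phid_{d,\epsilon}$ from \cite[Lemma~4.15]{IK21}, invoke the $\delta=0$ estimate there, write $F_\epsilon^\delta=F_\epsilon^0-2\int_{B_r^+}\delta\cdot\nabla\phid_{d,\epsilon}\,\d x$, and bound the DMI term via Green's formula using $|\phid_{d,\epsilon}|\leqslant d\pi$. One small inaccuracy: the construction in \cite{IK21} (reproduced in the paper) is a \emph{sum} of $d$ argument functions $\arg(x_1-f(x)x_\epsilon^j+i(x_2+2\pi\epsilon f(x)))$, not profiles ``shifted by multiples of $\pi$''; nonetheless each summand lies in $[0,\pi]$, so the $L^\infty$ bound you need is indeed immediate.
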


\begin{proof}
We use the same arguments as in~\cite[Lemma~4.15]{IK21}.
Let~$r \in (0,1)$ and~$\epsilon \in (0,e^{-1/r^2})$. Set~$a_\epsilon=\frac{1}{\left\vert \log \epsilon \right\vert}$ and~$x_\epsilon^j=ja_\epsilon$ for~$j \in \left\lbrace 1,...,d \right\rbrace$. Consider the interpolation function
\begin{equation*}
f \colon (x_1,x_2) \in \mathbb{R}_+^2 \mapsto \left\lbrace \begin{array}{ll}
1 & \text{ if } \sqrt{x_1^2+x_2^2}<r(1-r),
\\ \frac{r-\sqrt{x_1^2+x_2^2}}{r^2} & \text{ if } r(1-r) \leqslant \sqrt{x_1^2+x_2^2} \leqslant r,
\\ 0 & \text{ if } \sqrt{x_1^2+x_2^2}>r,
\end{array} \right.
\end{equation*}
and set
\begin{equation*}
\phid_{d,\epsilon} \colon (x_1,x_2) \in \mathbb{R}_+^2 \mapsto \sum_{j=1}^d \arg \left( x_1-f(x_1,x_2)x_\epsilon^j + i(x_2+2\pi\epsilon f(x_1,x_2)) \right).
\end{equation*}
We have
\begin{align*}
F_{\epsilon}^\delta(\phid_{d,\epsilon};B_r^+)
& = F_\epsilon^0(\phid_{d,\epsilon};B_r^+)
- 2 \int_{B_r^+} (\delta_1 \partial_1\phid_{d,\epsilon} +\delta_2 \partial_2\phid_{d,\epsilon}) \ \d x.
\end{align*}
From~\cite[Lemma~4.15]{IK21}, we have
\begin{equation*}
F_\epsilon^0(\phid_{d,\epsilon};B_r^+)
\leqslant \pi d \log \frac{r}{\epsilon}+Cd^2 (1+\left\vert \log r \right\vert + \log \left\vert \log \epsilon \right\vert ),
\end{equation*}
where~$C>0$ is independent of~$\epsilon$ and~$r$. Moreover, using Green's formula and~$\lv \phid_{d,\epsilon} \rv \leqslant \pi d$,
\begin{equation*}
\left\vert \int_{B_r^+} (\delta_1 \partial_1\phid_{d,\epsilon} +\delta_2 \partial_2\phid_{d,\epsilon}) \ \d x \right\vert
= \left\vert \int_{\partial B_r^+} \phid_{d,\epsilon} (\delta_1 \nu_1 +\delta_2\nu_2) \d \mathcal{H}^1 \right\vert
\leqslant Cd \left( \lv \delta_1 \rv + \lv \delta_2 \rv \right),
\end{equation*}
for every~$r \in (0,1)$, where $\nu=(\nu_1,\nu_2)$ is the outer unit normal vector on $\partial B_r^+$. The conclusion follows.
\end{proof}

\end{appendices}

\bibliographystyle{plain}

\end{document}